\def\Hneg#1{\widetilde{H}^{-#1}}
\newcommand{\Abs}[1]{\left|#1\right|}
\newcommand{\abs}[1]{\lvert#1\rvert}
\newcommand{\binnprd}[2]{\left( #1 , #2 \right)}
\newcommand{\innprd}[2]{\left< #1 , #2 \right>}
\def\Re{{\mathrm{Re}}}
\def\be{\begin{eqnarray}}
\def\ee{\end{eqnarray}}
\def\bes{\begin{eqnarray*}}
\def\ees{\end{eqnarray*}}
\def\R{\mathbb{R}}
\def\C{\mathbb{C}}
\def\H{\mathbb{H}}
\def\norm#1{\left|\!\left| #1 \right|\!\right|}
\def\nnorm#1{|\!| #1 |\!|}
\def\op#1{{\mathcal #1}}
\def\mat#1{{\bf #1}}
\def\vect#1{{\bf #1}}
\def\enorm#1{|\!|\!| #1 |\!|\!|}
\newtheorem{condition}[theorem]{Condition}
\newtheorem{remark}[theorem]{Remark}
\newtheorem{algorithm}[theorem]{Algorithm}
\title{Multigrid preconditioning of linear systems for semismooth Newton methods applied to
optimization problems constrained by smoothing operators}
\author{Andrei Dr{\u{a}}g{\u{a}}nescu\thanks{Department
   of Mathematics and Statistics, University of Maryland, Baltimore
   County, 1000~Hilltop Circle, Baltimore, Maryland 21250 ({\tt draga@umbc.edu}).}
This work was
supported  by the Department of Energy under contract
no. DE-SC0005455,  and by the National Science Foundation under award
DMS-1016177.  Computations were performed in part on the UMBC's High
Performance Computing Facility which was supported in part by the
National Science Foundation under awards CNS-0821258 and DMS-0821311.}
\begin{document}

\maketitle

\begin{abstract}
This article is concerned with the question of constructing efficient multigrid
preconditioners for the linear systems arising when applying semismooth
Newton methods to large-scale  linear-quadratic optimization
problems  constrained by smoothing operators with box-constraints on
the controls. It is shown that, for certain  discretizations of the
optimization problem, the linear systems to be solved at each
semismooth Newton iteration reduce to inverting principal minors of the
Hessian of the associated unconstrained problem. As in the case when
box-constraints on the controls are absent, the multigrid
preconditioner introduced here is shown to increase in quality as the
mesh-size decreases, resulting in a number of iterations that
decreases with mesh-size. However, unlike the unconstrained case, the
spectral distance between the preconditioners and the Hessian is shown to 
be of suboptimal order in general.
\end{abstract}

\begin{keywords} 
multigrid, 
semismooth Newton methods, 
optimization with PDE constraints, 
large-scale optimization
\end{keywords}

\begin{AMS}
65K10, 65M55, 65M32, 90C06 
\end{AMS}

\pagestyle{myheadings}
\thispagestyle{plain}
\markboth{ANDREI DR{\u{A}}G{\u{A}}NESCU}{MULTIGRID PRECONDITIONERS 
FOR SEMISMOOTH NEWTON METHODS}
% include sections as files

\section{Introduction}
\label{sec:intro}
The objective of this article is to develop  efficient multigrid preconditioners
for the linear systems arising in the  solution process 
of large-scale optimal control problems  constrained by partial differential equations (PDEs) using semismooth Newton methods (SSNMs). The model problems 
under scrutiny have the form
\begin{equation}
\label{eq:reducedoptprob}
\left\{\begin{array}{lll}
\vspace{4pt}
\mathrm{minimize} &\op{J}_{\beta}(u)\stackrel{\mathrm{def}}{=}\frac{1}{2}\nnorm{\op{K}u-y_d}^2+ \frac{\beta}{2}\nnorm{u}^2,&\ \ \beta>0\ \mathrm{fixed}\\
\vspace{4pt}
\mathrm{subject\  to:\ }& a \le u\le b\ \ \mathrm{a.e.},
\end{array}\right .
\end{equation}
where $\Omega\subset \R^d$ is a bounded, open set, $\op{K}:\op{U}\to\op{U}$ is a linear compact operator in the Hilbert space $\op{U}=L^2(\Omega)$,
and $a, b, y_d\in\op{U}$ are given functions so that $a(x)<b(x)$ for almost all $x\in \Omega$. 
More detailed conditions will be given in Section~\ref{sec:mainresults}. 

In the PDE-constrained optimization literature~$\op{K}$  appears oftentimes 
as the composition of two operators $\op{K}=I_{\op{Y}\to \op{U}} \op{S}$, where $I_{\op{Y}\to \op{U}}:\op{Y}\to\op{U}$ is the compact embedding
of a space~$\op{Y}$ into~$\op{U}$, and  $\op{S}:\op{U}\to\op{Y}$ is the solution operator of a 
linear PDE: if $e:\op{Y}\times \op{U}\rightarrow \op{Y}^*$ defines the linear PDE %$e(y,u) = 0$,
\bes
\label{eq:genpde}
e(y,u) = 0\ , 
\ees
then $e(y,u) = 0$ if and only if $y = \op{S} u$. This way we obtain an equivalent formulation of~\eqref{eq:reducedoptprob} 
that has become standard in the PDE-constrained optimization
literature~\cite{MR2516528}:
\begin{equation}
\label{eq:genpdecopt}
\left\{\begin{array}{l}
\vspace{6pt}
\mathrm{minimize} \ \ \frac{1}{2}\nnorm{y-y_d}^2+ \frac{\beta}{2}\nnorm{u}^2\ \\
%\vspace{4pt}
\mathrm{subject\ to:\ } e(y,u)=0,\  u\in \op{U}_{\mathrm{ad}}=\{u\in \op{U}\ :\ a\le u\le b\ \  \mathrm{a.e.}\}\ .\end{array}\right .
\end{equation}
In this formulation $u$ is the control and we shall call $y$ the state.  We shall also refer to~\eqref{eq:reducedoptprob} as the reduced form
of~\eqref{eq:genpdecopt}. There are many applications, however, for which the formulation~\eqref{eq:reducedoptprob} may not correspond 
to a problem of the form~\eqref{eq:genpdecopt}, as is the case when $\op{K}$ is an explicitly integral operator as in image deblurring. In other instances
it may simply be that the reduced form~\eqref{eq:reducedoptprob} is
more natural than~\eqref{eq:genpdecopt}. For example, in the solution process of the backwards parabolic equation discussed in~\cite{MR2429872} 
(see also the related work~\cite{Bart_Omar:SC05a}) $\op{K}u$ represents the time-$T$ solution of a linear parabolic equation with initial value~$u$; 
hence, the purpose of the optimization  problem is to find an initial value $u$ for which the time-$T$ solution $\op{K}u$ is close to a given
state $y_d$. While this problem can be formulated as a PDE-constrained optimization~\eqref{eq:genpdecopt} with space-time constraints, in a truly large-scale, 
four-dimensional setting the three-dimensional reduced form~\eqref{eq:reducedoptprob} of the optimization problem is more trackable. We should remark that 
both for the image-deblurring and the backwards heat-equation examples the operator $\op{K}$ can be discretized at various resolutions, and the 
adjoint $\op{K}^*$ can be computed at a cost comparable to that of $\op{K}$. Hence, in this work we focus on the reduced problem~\eqref{eq:reducedoptprob}.

Due to the availability of increasingly powerful parallel computers, the scientific community  has shown 
a growing interest over the last decade  in  developing scalable solvers  for 
large-scale optimization problems with PDE constraints. Multigrid  methods have long been associated  with large-scale linear systems, the 
paradigm being that the solution process can be significantly accelerated by using  multiple resolutions of the same problem.
However, the exact embodiment of the  multigrid paradigm depends
strongly on the class of problems considered, with multigrid methods for differential
equations (elliptic, parabolic, flow problems) being significantly different from
methods for integral equations.
% like~\eqref{eq:illposedorig}.
To place our problem in the context of multigrid methods we consider the simplified version of~\eqref{eq:reducedoptprob}
obtained by removing the inequality constraints on the
control, e.g. $\op{U}_{\mathrm{ad}}=\op{U}$, case in which~\eqref{eq:reducedoptprob}
reduces to the linear system 
\be
\label{eq:normaleq}
(\op{K}^*\op{K}+\beta I) u = \op{K}^* y_d\ ,
\ee
which represents the normal equations associated with the Tikhonov regularization of the ill-posed problem
\be
\label{eq:illposedorig}
\op{K} u = y_d\ .
\ee
Beginning with the works of Hackbusch~\cite{hackbusch81} (see also~\cite{MR1350296})
much effort  has been devoted to developing efficient multigrid methods for
solving equations like~\eqref{eq:normaleq} and~\eqref{eq:illposedorig}, e.g. see~\cite{MR1151773, MR97k:65299,
MR2001h:65069, MR1986801, MR2421947, MR2429872} and the references
therein. For example, Dr{\u a}g{\u a}nescu and Dupont~\cite{MR2429872}  have constructed a multigrid preconditioner $\op{M}_h$ 
 which satisfies
\be
\label{eq:mgoptimality}
1-C \frac{h^p}{\beta}\le \frac{\innprd{\op{G}_h u}{u}}{\innprd{\op{M}_hu}{u}}\le 1+C \frac{h^p}{\beta},\ \ \ \forall u\ne 0,
\ee
where $h$ is the mesh-size, $\op{G}_h$ is the discretized version of $(\op{K}^*\op{K}+\beta I)$, and
$p$ is the order of the discretization ($p=2$ for piecewise linear finite elements).
We regard~\eqref{eq:mgoptimality} as optimal-order scalability since it implies that the condition number
of the unpreconditioned system  $\mathrm{cond}(\op{G}_h)=O(1/\beta)$ is being reduced by a factor of $h^p$ 
in the $\op{M}_h$-preconditioned system, namely $\mathrm{cond}(\op{M}_h^{-1}\op{G}_h)=O(h^p/\beta)$; 
this reduction is of optimal order given that the discretization order is $p$.
A similar situation is encountered in classical multigrid for elliptic problems 
where multigrid is used to reduce the condition number from $O(h^{-2})$ to $O(1)$, the latter implying
the desired mesh-independence property. We should point out that~\eqref{eq:mgoptimality} implies that
the number of iterations actually \emph{decreases} with $h\downarrow 0$ to the point where, asymptotically, only one 
iteration is needed on very fine meshes.

The presence of explicit box constraints on the controls and/or states in PDE-constrained
optimization problems is sometimes critical both for practical (design
constraints) and theoretical reasons (e.g.  states representing
densities or concentrations of substances that have to be nonnegative).  Methods for solving optimization
problems with inequality constraints are fundamentally different and
more involved than those for unconstrained problems; they  generally
fall into  two competing categories: \emph{interior point methods} (IPMs) and
active-set methods such as SSNMs.
Both types of methods exhibit superlinear local convergence
and can be formulated and analyzed  both  in finite dimensional spaces as well as in function
spaces~\cite{MR1972217, MR2421314, MR2193506}, the latter being a critical step towards
proving mesh-independence for the number of optimization steps. Both IPMs and SSNMs are iterative procedures
that require  the equivalent of a few PDE solves, i.e., applications of $\op{K}$,
for each iteration (called here {\em outer iteration}) and the solution one or two {\em inner} linear systems; for \mbox{SSNMs} only one inner
linear solve is required,  
while for Mehrotra's predictor-corrector IPM implementation two inner linear solves are needed for each outer iteration.
Efficiency of the
solution process is measured  by the number of outer iterations (ideally mesh-independent) needed
to solve the problem to a desired tolerance  and by the ability to
solve the inner linear systems efficiently. In this work we concentrate on the latter.

Even though SSNMs and IPMs essentially solve the same problem, the linear algebra requirements
for IPMs are different from those of SSNMs. If formulated in the reduced form~\eqref{eq:reducedoptprob}, 
that is, with the PDE constraints eliminated (e.g. when the application of~$\op{K}$ is treated as a black-box) 
the structure of the systems arising in the IPM solution process is shown to be similar to
the system~\eqref{eq:normaleq} for the unconstrained problem~\cite{Dra:Pet:ipm}. More precisely, for IPMs
we need to solve systems of the form
\be
\label{eq:linsysipm}
\op{G}_{h,\lambda}u\stackrel{\mathrm{def}}{=}(\op{K}^*\op{K} +\op{D}_{\lambda}) u = b\ ,
\ee
where $\op{D}_\lambda$ is the multiplication operator with a relatively smooth function $\lambda$.
Moreover, under specific conditions and for a  natural
discrete formulation of~\eqref{eq:reducedoptprob}, Dr{\u a}g{\u a}nescu and Petra~\cite{Dra:Pet:ipm} have  
constructed  multigrid preconditioners for the linear systems~\eqref{eq:linsysipm} that exhibit
a certain degree of optimality similar to the one multigrid 
preconditioners in~\cite{MR2429872}: the resulting multigrid preconditioner $\op{M}_h$ for the $\op{G}_{h,\lambda}$ satisfies
\be
\label{eq:mgoptimalityipm}
1-C \frac{h^2}{\beta}\nnorm{\lambda^{-\frac{1}{2}}}_{W^2_{\infty}}\le 
\frac{\innprd{\op{G}_{h,\lambda} u}{u}}{\innprd{\op{M}_hu}{u}}\le 1+C \frac{h^2}{\beta}\nnorm{\lambda^{-\frac{1}{2}}}_{W^2_{\infty}},
\ \ \ \forall u\ne 0\ .
\ee
We recognize in~\eqref{eq:mgoptimalityipm} the optimal-order $h^2$-term (linear splines were used for discretization), 
but also remark that the quality of the preconditioner normally is affected by the lack of smoothness of $\lambda$: in general,
as the solution approaches the boundary, the smoothness of $\lambda$ is expected to degrade.

In this article we use similar ideas to design preconditioners for the linear systems arising in the SSNM solution process. 
Our main contribution is two-fold: we  construct  suitable coarse spaces and transition operators, and we give a detailed analysis of
the resulting two- and multigrid preconditioners. We note that the basic elements of our analysis are related to 
those in~\cite{Dra:Pet:ipm}, so in this sense the present work could be regarded as a 
companion of~\cite{Dra:Pet:ipm}. However, we should point out that the algorithms for SSNMs are essentially different from those
for IPMs, and so are the structures of their analyses.  For SSNMs we will show that the linear systems to be solved are
essentially principal subsystems of~\eqref{eq:normaleq} where the selected rows (and columns) correspond to the constraints that are 
deemed inactive at some point in the solution process. The constructed multigrid preconditioner is shown to
essentially satisfy~\eqref{eq:mgoptimality} with $p=\frac{1}{2}$. While this order of approximation is clearly suboptimal, it 
still brings a significant reduction of the condition number if $\sqrt{h}\ll \beta$, and still results in a solution process that requires
fewer and fewer inner linear iterations as $h\downarrow 0$.

The method developed and analyzed in this article is related to the multigrid method of the second kind developed by Hackbusch 
(see~\cite{MR814495}, Ch. 16). In fact, when using the multigrid iteration described in~\cite{MR814495} in connection with 
the coarse spaces and transition operators described here we observe that the number of inner linear iterations decreases as~$h\downarrow 0$. However,
our numerical experiments in Section~\ref{sec:numerics} indicate that our method is more efficient in absolute terms.
We should also note that the coarse spaces defined in this work are related to the symmetric multigrid
preconditioner developed by Hoppe and Kornhuber in~\cite{MR1276702} for obstacle problems, where the matrices to be preconditioned 
are subsystems of elliptic operators.

%We should also mention alternative strategies, mainly used in connection to problems of the form~\eqref{eq:genpdecopt}.
While our strategy is mainly designed for the reduced form~\eqref{eq:reducedoptprob}, a significant literature is devoted to 
multigrid methods applied to the complementarity problem representing the Karush-Kuhn-Tucker (KKT) system
of~\eqref{eq:genpdecopt}. Of these techniques we mention the collective smoothing multigrid method of Borz{\`{\i}} and Kunisch~\cite{MR2160699}.
For further references we refer the reader to the review article of Borz{\`{\i}} and Schulz~\cite{MR2505585}. Also, in a recent
article Stoll and Wathen~\cite{stollwathen} develop preconditioners  for the linear systems arising in the SSNMs solution process
of the unreduced optimal control problem~\eqref{eq:genpdecopt}; in their approach the linear systems are indefinite (since they
correspond to derivatives of Lagrangians) and sparse, since it is not $\op{K}$, but $\op{K}^{-1}$
that is explicitly present in the system. Yet another alternative strategy for solving the linear systems for SSNMs applied to the unreduced
problem~\eqref{eq:genpdecopt}, presented by Ulbrich in~\cite{MR1972217}, p.~219ff (see also~\cite{Ulbrichhabilitation})
involves reducing the linear systems to solving the discrete PDEs for which  efficient solvers are assumed to be readily available
(such as classical multigrid for elliptic problems). The question of which method is the most efficient is difficult to answer
for a general setting. However, we emphasize that the technique proposed in this article will work when the operator $\op{K}$ is given only as a black-box, or when
solvers are available for computing $\op{K}u$ efficiently.

This article is organized as follows:
in Section~\ref{sec:mainresults} we give the formal introduction of the problem, briefly discuss SSNMs, 
and present the main results.  Section~\ref{sec:analysis} is essentially devoted to proving the main result,
Theorem~\ref{maintheorem}, concerning the two-grid preconditioner, 
while in Section~\ref{sec:mganalysis} we extend the analysis to the multigrid preconditioner. 
In Section~\ref{sec:numerics} we show some numerical results to support 
our theoretical work, and we formulate some conclusions in Section~\ref{sec:conclusions}. Appendix~\ref{sec:specdist}
contains some technical results used in Section~\ref{sec:mganalysis}.

\section{Problem formulation and main results}
\label{sec:mainresults}
Our solution strategy follows the discretize-then-optimize paradigm, where
we first formulate a discrete optimization problem associated with~\eqref{eq:reducedoptprob},
which we then solve using SSNMs. After introducing the discrete
framework in Section~\ref{ssec:notation}, we discuss the optimality conditions and their semismooth formulation
in Section~\ref{ssec:ssnm}. In Section~\ref{ssec:primaldual} we derive the
linear systems needed to be solved at each  SSNM iteration.
The two-grid preconditioner and main two-grid results are given in Section~\ref{ssec:multigriddef}.
Furthermore, we  discuss the multigrid preconditioner in Section~\ref{ssec:multigrid}.

\subsection{Notation and discrete problem formulation}
\label{ssec:notation}
Let $\Omega\subset \R^d$ ($d=1, 2,$ or $3$) be a bounded domain which, for simplicity, we assume to be polygonal (if $d=2$) or
polyhedral (for $d=3$).
We denote by  $W_p^m(\Omega), H^m(\Omega),
H_0^m(\Omega)$ (with $p\in[1,\infty], m\in \mathbb{N}$) the standard
Sobolev spaces, and by \mbox{$\nnorm{\cdot}$} and
$\innprd{\cdot}{\cdot}$ the $L^2$-norm and inner product,
respectively.  Let $\Hneg{m}(\Omega)$ be the dual (with respect to the
$L^2$-inner product) of $H^m(\Omega)\cap H^1_0(\Omega)$ for $m> 0$, with the
norm given by
$$
\nnorm{u}_{\Hneg{m}(\Omega)} = \sup_{v\in H^m(\Omega)\cap H^1_0(\Omega)}\innprd{u}{v}/\nnorm{v}_{H^m(\Omega)}\ .
$$
The space of bounded linear operators on a Banach space $X$ is denoted by $\mathfrak{L}(X)$. 
We regard square $n\times n$ matrices as operators in
$\mathfrak{L}(\mathbb{R}^n)$ and we write matrices and vectors using bold font. If $\vect{A}$ is a symmetric positive definite matrix, 
we denote by $\binnprd{\vect{u}}{\vect{v}}_{\vect{A}}=\vect{v}^T \vect{A} \vect{u}$ 
the $\vect{A}$-dot product of two vectors $\vect{u}, \vect{v}$, and by
$\abs{\vect{u}}_{\vect{A}} = \sqrt{\binnprd{\vect{u}}{\vect{u}}_{\vect{A}}}$ 
the $\vect{A}$-norm; if $\vect{A}=\vect{I}$ we drop the subscript from the inner product
and norm. The space of $m\times n$ matrices is denoted by $M_{m\times n}$; if $m=n$ we write
$M_{n}$ instead of $M_{n\times n}$. Given some norm $\nnorm{\cdot}_s$ on a vector
space $\op{X}$, and $T\in\mathfrak{L}(\op{X})$, we denote by $\nnorm{T}_s$ the induced operator-norm
$$\nnorm{T}_s = \sup_{u\in \op{X},\ \nnorm{u}_s = 1}\nnorm{T u}_s\ .$$
Consequently, if $T\in\mathfrak{L}(L^2(\Omega))$ then $\nnorm{T}$ (no subscripts) is the $L^2$ operator-norm
of~$T$. If $\op{X}$ is a  Hilbert space and $T\in \mathfrak{L}(\op{X})$ then
$T^*\in \mathfrak{L}(\op{X})$ denotes the adjoint of~$T$.
The defining elements of the discrete optimization problem are: the discrete analogues of $\op{K}$, discrete norms,
and discrete inequality constraints, all of which we introduce below.

%As stated in SeIn the interest of the presentation we will make a few more specific choices for our optimization problem, namely
%we let $\op{U}=\op{Y}=L^2(\Omega)$, and reduce the constraints on the control to non-negativity, so that problem~\eqref{eq:reducedoptprob} becomes
%\begin{equation}
%\label{eq:reducedoptprob1}
%\left\{\begin{array}{lll}
%\vspace{4pt}
%\mathrm{minimize} &\frac{1}{2}\nnorm{\op{K}u-y_d}^2+ \frac{\beta}{2}\nnorm{u}^2,&\ \ \beta>0\ \mathrm{fixed},\\
%\vspace{4pt}
%\mathrm{subj.\ to:\ }& u\in L^2(\Omega),\  u\ge 0 \ \mathrm{ a.e.}
%\end{array}\right .
%\end{equation}
%
%%% meshes
To discretize the optimal control problem~\eqref{eq:reducedoptprob} we consider a sequence of quasi-uniform (in the sense of~\cite{MR2373954})  meshes 
%\footnote{triangular if $d=2$, tetrahedral if $d=3$}
$\op{T}_j,\ j=0,1,2,\dots$, which we assume to be either simplicial (triangular if $d=2$, tetrahedral if $d=3$) 
or rectangular, and let $$h_j=\max\{\mathrm{diam}(T)\ :\ T\in \op{T}_j\}\ ,\ \ j=0,1,2,\dots\ .$$
It is assumed that there are mesh-independent constants $0<\underline{f}\le\overline{f} <1$ (usually $\underline{f}=\overline{f}=1/2$) so that 
$$
\underline{f}\le h_j/h_{j-1}\le \overline{f}\ .
$$
We define the standard finite element spaces: for simplicial elements let
\bes
\op{V}^s_j =  
\{u\in \op{C}(\overline{\Omega})\ :\  \forall T\in \op{T}_j,\ \ u|_T\  \mathrm{is\ linear}, \ \ u|_{\partial \Omega} \equiv 0 \}\ ,
\ees
and for rectangular we use piecewise tensor-products of linear polynomials
\bes
\op{V}^r_j =  
\{u\in \op{C}(\overline{\Omega})\ :\  \forall T\in \op{T}_j,\ \ u|_T\  \in\op{Q}_1, \ \ u|_{\partial \Omega} \equiv 0 \}\ ,
\ees
where $$\op{Q}_1=\left\{\sum_j c_j \prod_{k=1}^d l_{j,k}(x_k)\ : \ l_{j,k}\  \mathrm{linear\  polynomial\  of\  one\  variable}\right\}\ .$$
For simplicity we assume that $\op{T}_{j+1}$ is a uniform refinement of $\op{T}_{j}$ so the associated spaces are nested 
\bes\op{V}_{j} \subset \op{V}_{j+1} \subset H_0^1(\Omega)\ .\ees
Since the algorithms and results are the same for both types of finite element spaces we will denote by $\op{V}_j$
either $\op{V}^s_j$ or $\op{V}^r_j$.
%
%%% discrete norms
Let $N_j=\mathrm{dim}(\op{V}_j)$ and $P^{(j)}_1, \dots, P^{(j)}_{N_j}$ the nodes of $\op{T}_j$ that lie in the interior of 
$\Omega$, and define  $\op{I}_j:\op{C}(\Omega)\rightarrow \op{V}_j$ to  be the standard interpolation operator 
$$\op{I}_j(u)=\sum_{i=1}^{N_j} u(P^{(j)}_i)\varphi_i^{(j)}\ ,$$ 
where $\varphi_i^{(j)}, i=1,\dots, N_j$ are the standard nodal basis functions. 
If we replace exact integration on an element $T$ with vertices $P_1, \dots, P_\nu$ by the cubature
$$
\int_T f(x) dx \approx \frac{\mathrm{vol}(T)}{\nu} \sum_{P\ \mathrm{vertex\ of\ }T} f(P)\ ,
$$
then the $L^2$-inner product is approximated by the mesh-dependent  inner product
\begin{eqnarray*}
\label{def:meship}
\innprd{u}{v}_j = \sum_{i=1}^{N_j} w_i^{(j)}\: u(P^{(j)}_i) v(P^{(j)}_i),\ \ \mathrm{for}\
u, v\in {\mathcal V}_j\ ,
\end{eqnarray*} 
where
\be
\label{eq:weighdef}
w_i^{(j)}{=} \nu^{-1}\sum_{P_i^{(j)}\ \mathrm{vertex\ of\ }T } \mathrm{vol}(T)\ .
\ee
The discrete norms are then given by 
$$\enorm{u}_j\stackrel{\mathrm{def}}{=}\sqrt{\innprd{u}{u}_j}\ .$$
Since the quadrature/cubature is exact for linear functions, or tensor-products of linear functions, respectively, we have
\begin{eqnarray*}
%\label{eq:weighdef}
\innprd{u}{v}_j = \int_{\Omega} \op{I}_j (u v),\ \mathrm{for\ all\ }u, v\in\op{V}_j\ .
\end{eqnarray*}
Moreover, due to quasi-uniformity, there exist positive constants $C_1, C_2$
independent of $j\ge 0$ such that
\begin{equation}
\label{eq:equivmeshipl2}
C_1\nnorm{u}\le \enorm{u}_j\le C_2\nnorm{u},\ \forall u \in\op{V}_j\ .
\end{equation}
We should point out that the norm-equivalence~\eqref{eq:equivmeshipl2}
extends to show mesh-independent equivalence of the associated operator-norms.
We say that the weights  $w_i^{(j)}$  are uniform with respect to the mesh $\op{T}_j$ if there exists  $w_j>0$ independent of~$i$  so that
$$w_i^{(j)}=\omega_j h^d\ \  \mathrm{for\  }\  i=1,\dots, N_j\ .$$
We call a  triangulation \emph{locally symmetric} if for every vertex $P$ the associated nodal basis function $\varphi$
is symmetric with respect to the reflection in $P$, that is,
$$\varphi(2P-x) = \varphi(x),\ \ \forall x\in\Omega\ .$$
If a mesh is uniform, then it is locally symmetric and the weights $w_i^{(j)}$  are uniform.

%%% operators
On each space $\op{V}_j$ consider an operator $\op{K}_j\in\mathfrak{L}(\op{V}_j)$ representing a discretization of~$\op{K}$.
For the discrete operators we denote $\op{K}^*_j$ to be the adjoint of $\op{K}_j$ with respect to $\innprd{\cdot}{\cdot}_j$,
that is, $$\innprd{\op{K}_j^* u}{v}_j = \innprd{u}{\op{K}_j v}_j,\ \forall u,v\in\op{V}_j\ .$$
We assume that the operators satisfy the following condition.
%%---------------------------------------%
\begin{condition}
\label{mgipm:cond:condsmooth} There exists a constant $C=C(\op{K})$ depending on $\op{K}, 
\Omega, \op{T}_{0}$ and independent of $j$ so that the following hold: 
\begin{enumerate}
\item[{\bf [a]}] smoothing:
\begin{equation}
\label{mgipm:cond:par_smooth}
\max(\nnorm{\op{K} u}_{H^m(\Omega)}, \nnorm{\op{K}^* u}_{H^m(\Omega)}) \le C \norm{u},\ \ \forall u\in L^2(\Omega),\  m=0, 1, 2\ ;
\end{equation}
\item[{\bf [b]}] smoothed approximation:
\begin{equation}
\label{mgipm:cond:consist}
\nnorm{\op{K} u - \op{K}_j u}_{H^m(\Omega)}  \le C h_j^{2-m}\norm{u},
\ \ \forall u\in {\mathcal V}_j,\  m=0, 1,\ j\ge 0\ ;
\end{equation}
\item[{\bf [c]}] uniform boundedness of discrete operators and their adjoints:
\begin{equation}
\label{mgipm:cond:unif}
\max(\nnorm{\op{K}^*_j u}_{L^{\infty}(\Omega)}, \nnorm{\op{K}_j u}_{L^{\infty}(\Omega)})  \le C \norm{u},
\ \ \forall u\in {\mathcal V}_j,\  \ j\ge 0\ .
\end{equation}
\end{enumerate}
\end{condition}
%%---------------------------------------%

%
We now formulate the discrete optimization problem using the discrete norms and we enforce the inequality-constraints 
at the vertices:
\begin{equation}
\label{eq:reducedoptdiscrete}
\left\{\begin{array}{ll}
\vspace{7pt}
\mathrm{minimize} &\op{J}_{j,\beta}({u})\stackrel{\mathrm{def}}{=}\frac{1}{2}\enorm{\op{K}_ju-y_d}_j^2+ \frac{\beta}{2}\enorm{u}_j^2,\ \ \beta>0\ \mathrm{fixed}\\
\vspace{4pt}
\mathrm{subject\ to:\ }& u\in \op{V}_j,\ \  a_i^{(j)}\le u(P_i^{(j)})\le b_i^{(j)} \ \mathrm{for}\ i=1,\dots,N_j\ ,
\end{array}\right .
\end{equation}
where 
%either $a_i^{(j)}=a(P_i^{(j)}), b_i^{(j)}=b(P_i^{(j)})$, if $a, b$ are continuous functions, or 
the vectors
$\mat{a}^{(j)}=[a_1^{(j)},\dots,a_{N_j}^{(j)}]^T$ (resp.,\mbox{$\mat{b}^{(j)}=[b_1^{(j)},\dots,b_{N_j}^{(j)}]^T$}) represent $L^2$-projections of the functions
$a$ (resp., $b$).
onto~$\op{V}_j$.
%The formulation~\eqref{eq:reducedoptdiscrete} is identical to the one described in~\cite{Dra:Pet:ipm}, except for we retained
%only one inequality constraint for simplicity. It is worth noting that
%the imposed pointwise  non-negativity constraints at the nodes imply the non-negativity of the control everywhere. This is not
%the case with higher order elements where nodal basis functions can have negative values.

If $\mat{K}_j$ is the matrix representation of $\op{K}_j$ in the nodal basis, $\mat{W}_j$ is the diagonal matrix with
diagonal entries $w^{(j)}_1, w^{(j)}_2,\dots, w^{(j)}_{N_j}$, and with $\mat{a}^{(j)}$, $\mat{b}^{(j)}$ defined earlier,
the problem~\eqref{eq:reducedoptdiscrete} reads  in matrix form 
\begin{equation}
\label{eq:reducedoptmatrix}
\left\{\begin{array}{lll}
\vspace{7pt}
\mathrm{minimize} &J_{j,\beta}(\mat{u})\stackrel{\mathrm{def}}{=}
\frac{1}{2}\Abs{\mat{K}_j \mat{u}-\mat{y}_d}_{\mat{W}_j}^2+ \frac{\beta}{2}\Abs{\mat{u}}_{\mat{W}_j}^2,&\ \ \beta>0\ \mathrm{fixed}\\
\vspace{4pt}
\mathrm{subject\ to:\ }& \mat{u}\in \R^{N_j},\  \mat{a}^{(j)}\le \mat{u}\le \mat{b}^{(j)}\ .
\end{array}\right .
\end{equation}
Furthermore, we write $J_{j,\beta}(\mat{u})= \frac{1}{2}\mat{u}^T \mat{C}_j\mat{u} -\mat{f}_j^T\mat{u} + {\bm\gamma}_j$, 
where 
\be
\label{eq:definitionquadform}
\mat{C}_j=\mat{K}_j^T\mat{W}_j\mat{K}_j + \beta \mat{W}_j,\ \ \mat{f}_j=\mat{K}_j^T\mat{W}_j\mat{y}_d,\ \ {\bm \gamma}_j=\frac{1}{2}\mat{y}_d^T \mat{W}_j\mat{y}_d\ .
\ee
We also point out that the adjoint operator $\op{K}_j^*$ is represented by the matrix $\mat{W}_j^{-1}\mat{K}_j^T\mat{W}_j\mat{K}_j $,
so we denote 
$$\mat{K}_j^* \stackrel{\mathrm{def}}{=} \mat{W}_j^{-1}\mat{K}_j^T\mat{W}_j\mat{K}_j\ .$$

\subsection{Optimality conditions and SSNMs}
\label{ssec:ssnm}
Since $\op{J}_{\beta}$ is strictly convex and quadratic,~\eqref{eq:reducedoptprob} has a unique solution $u\in L^2(\Omega)$
satisfying the KKT conditions (e.g. see~\cite{MR2516528, MR2583281}): 
there exist $\lambda_a, \lambda_b\in L^2(\Omega)$ so that
\be
\label{eq:KKTcont}
\left\{\begin{array}{l}
\nabla \op{J}_{\beta} + \lambda_b-\lambda_a =0\ ,\\
a-u\le 0,\ \lambda_a\ge 0,\ (a-u)\lambda_a = 0\ \ \mathrm{a.e.},\\
u-b\le 0,\ \lambda_b\ge 0,\ (u-b)\lambda_b = 0\ \ \mathrm{a.e.}\\
\end{array}
\right .
\ee
After denoting $\lambda=\lambda_a-\lambda_b$, the complementarity system~\eqref{eq:KKTcont} can be written as a nonlinear,
non-smooth system
\be
\label{eq:ssnmcont}
\left\{\begin{array}{l}
\nabla \op{J}_{\beta} -\lambda =0\ ,\\
\lambda-\max(0,\lambda+\sigma (a-u))-\min(0,\lambda+\sigma(b-u)) = 0\ ,
\end{array}
\right .
\ee
where $\sigma>0$ is an arbitrary constant, and $\lambda_a=\max(0,\lambda),\ \lambda_b = -\min(0,\lambda)$. Since 
$\nabla\op{J}_{\beta}(u) = \op{K}^*(\op{K} u - y_d) + \beta u$, for $\sigma=\beta$ the system~\eqref{eq:ssnmcont} is equivalent to
\be 
\label{eq:ssnmcont1}
\nabla \op{J}_{\beta} -\max(0,\op{K}^*(\op{K} u - y_d)+\beta a)-\min(0,\op{K}^*(\op{K} u - y_d)+\beta b) = 0\ .
\ee
Cf.~\cite{MR2839219}, if $a, b\in L^s(\Omega)$ with $s>2$, then Condition~\ref{mgipm:cond:condsmooth} implies that the
nonlinear function in~\eqref{eq:ssnmcont1} is semismooth (slantly differentiable); therefore Newton's method converges 
superlinearly when applied to~\eqref{eq:ssnmcont1}.

To simplify notation, for the remainder of this section we omit the sub- or superscripts  `$j$' if there is no 
danger of confusion; hence  $J_\beta=J_{j,\beta}$, $\vect{C}=\vect{C}_j$, $\vect{a}=\vect{a}^{(j)}$, $N=N_j$, etc.
The KKT and associated semismooth equation for~\eqref{eq:reducedoptmatrix}
is similar to the continuous case, namely the solution  $\mat{u}$ satisfies the following complementarity problem: there exist vectors  
${{\bm \lambda}}_a, {{\bm \lambda}}_b\in\R^{N}$ so that
\be
\label{eq:KKTdisc}
\left\{\begin{array}{l}
\nabla J_{\beta}+{{\bm \lambda}}_b-{{\bm \lambda}}_a =\vect{0}\ ,\\
\vect{a}-\vect{u}\le \vect{0},\ {{\bm \lambda}}_a\ge 0,\ (\vect{a}-\vect{u})\cdot {{\bm \lambda}}_a = \vect{0}\ ,\\
\vect{u}-\vect{b}\le \vect{0},\ {{\bm \lambda}}_b\ge 0,\ (\vect{u}-\vect{b})\cdot {{\bm \lambda}}_b = \vect{0}\ ,\\
\end{array}
\right .
\ee
where $\vect{u}\cdot \vect{v}$ is the componentwise vector multiplication. Since $\nabla J_{\beta}(u)=\vect{C} \mat{u}-\vect{f}$
and $\vect{W}$ is diagonal, after denoting $\hat{{\bm \lambda}}=\vect{W}^{-1}({{\bm \lambda}}_a-{{\bm \lambda}}_b)$ and 
left-multiplying the first equation in~\eqref{eq:KKTdisc} by $\vect{W}^{-1}$, the complementarity system~\eqref{eq:KKTdisc} 
is written as a nonlinear, non-smooth system
\be
\label{eq:ssnmdisc}
\left\{\begin{array}{l}
(\vect{K}^*\mat{K} + \beta \mat{I})\vect{u}-\vect{K}^*\vect{y}_d-\hat{\bm \lambda} =\vect{0}\ ,\\
\hat{\bm \lambda} -\max(\vect{0},\hat{\bm \lambda}+\sigma (\vect{a}-\vect{u}))-\min(\vect{0},\hat{\bm \lambda}+\sigma(\vect{b}-\vect{u})) = \vect{0}\ ,
\end{array}
\right .
\ee
since $\vect{W}^{-1}{\vect{f}} = \vect{K}^*\vect{y}_d$. For $\sigma=\beta$,~\eqref{eq:ssnmdisc} is equivalent to 
\be
\label{eq:ssnmdiscsingle}
\left\{\begin{array}{l}
\hat{\bm \lambda}=(\vect{K}^*\mat{K} + \beta \mat{I})\vect{u}-\vect{K}^*\vect{y}_d\ ,\\
\hat{\bm \lambda} -
\max(\vect{0},\vect{K}^*(\mat{K}\vect{u} -\vect{y}_d)+\beta \vect{a})-\min(\vect{0},\vect{K}^*(\mat{K}\vect{u} -\vect{y}_d)+\beta \vect{b}) = \vect{0}\ .
\end{array}
\right .
\ee
While the resemblance between the discrete system~\eqref{eq:ssnmdiscsingle} and its continuous counterpart~\eqref{eq:ssnmcont1}
extends beyond notation, it is neither evident nor is it the purpose of this paper to prove
mesh-independence of the convergence rate of Newton's method for~\eqref{eq:ssnmdiscsingle}; however, we \emph{do} observe mesh-independence 
in numerical computations even for $\sigma\ne \beta$. We should also note that the mesh-independence results of Hinterm{\"u}ller and 
Ulbrich in~\cite{MR2085262} do not apply directly to~\eqref{eq:ssnmdiscsingle} mainly because here
we discretize the control using continous piecewise linear functions rather than piecewise constants as in~\cite{MR2085262}, so the operator  
$\varphi\mapsto \max(0,\varphi)$ is not well defined in $\op{V}_j$: if $\varphi\in \op{V}_j$ has both negative 
and positive values on an element $T\in\op{T}_j$, then $\max(0,\varphi)$ is no longer in $\op{V}_j$. This is the main reason
for which we resort to a matrix formulation of the discrete optimization problem~\eqref{eq:reducedoptdiscrete} prior to formulating
a discrete semismooth Newton system, instead of formulating  a semismooth Newton system directly for finite element spaces. 
However, our primary interest is to construct multigrid preconditioners for the linear systems arising in the solution process of~\eqref{eq:ssnmdiscsingle}
which we discuss in the next section.

\subsection{Primal-dual formulation and linear systems}
\label{ssec:primaldual}
As shown in~\cite{MR1972219}, the semismooth Newton method for~\eqref{eq:reducedoptmatrix} can be regarded as a primal-dual
active set method which we now describe briefly. If $\vect{u}$, $\hat{\bm \lambda}$ solve~\eqref{eq:ssnmdisc},
then we define the following sets:
\bes
\op{I}&=&\{i\in \{1,\dots,N\}\ :\ \hat{\bm \lambda}_i+\sigma(\vect{a}_i -\vect{u}_i) < 0\ \ \mathrm{and}\ \  \hat{\bm \lambda}_i+\sigma(\vect{b}_i -\vect{u}_i)>0\}\ ,\\
\op{A}^a&=&\{i\in \{1,\dots,N\}\ :\ \hat{\bm \lambda}_i+\sigma(\vect{a}_i -\vect{u}_i) \ge 0\ \ \mathrm{and}\ \  \hat{\bm \lambda}_i+\sigma(\vect{b}_i -\vect{u}_i)>0\}\ ,\\
\op{A}^b&=&\{i\in \{1,\dots,N\}\ :\ \hat{\bm \lambda}_i+\sigma(\vect{a}_i -\vect{u}_i) < 0\ \ \mathrm{and}\ \  \hat{\bm \lambda}_i+\sigma(\vect{b}_i -\vect{u}_i)\le 0\}\ .
\ees
A simple argument shows that if $i\in \op{I}$, then $\vect{a}_i<\vect{u}_i<\vect{b}_i$ and $\hat{\bm \lambda}_i=0$; on the other hand, if 
$i\in \op{A}^a$, then $\vect{a}_i=\vect{u}_i<\vect{b}_i$ and $\hat{\bm \lambda}_i\ge 0$; finally, if  
$i\in \op{A}^b$, then $\vect{a}_i<\vect{u}_i=\vect{b}_i$ and $\hat{\bm \lambda}_i\le 0$. Also, since $\vect{a}_i<\vect{b}_i$ for all $i$, we
have $\op{I}\cup \op{A}^a \cup \op{A}^b = \{1, \dots,N\}$.
The primal-dual active set method produces a sequence of sets $(\op{I}_k, \op{A}^a_k, \op{A}^b_k)_{k=1,2,\dots}$ that approximate
$(\op{I}, \op{A}^a, \op{A}^b)$. Given $(\op{I}_k, \op{A}^a_k, \op{A}^b_k)$, we set the system
\be
\left\{
\begin{array}{l}\vspace{5pt}
  (\vect{K}^*\mat{K} + \beta \mat{I})\vect{u}^{(k+1)}-\hat{\bm \lambda}^{(k+1)} =\vect{K}^*\vect{y}_d\ ,\\\vspace{5pt}
  \vect{u}^{(k+1)}_i=\vect{a}_i,\ \  \mathrm{for}\ \ i\in \op{A}^a_k,\ \ \  \vect{u}^{(k+1)}_i=\vect{b}_i,\ \  \mathrm{for}\ \ i\in \op{A}^b_k\ , \\\vspace{5pt}
  \hat{{\bm \lambda}}^{(k+1)}_i=0,\ \  \mathrm{for}\ \ i\in \op{I}_k\ .
\end{array}
\label{eq:ssnit}
\right .
\ee
The new set-iterates are then given by
\bes
\op{I}_{k+1}&=&\{i\ :\ 
\hat{\bm \lambda}^{(k+1)}_i+\sigma(\vect{a}_i -\vect{u}^{(k+1)}_i) < 0\ \ \mathrm{and}\ \  \hat{\bm \lambda}^{(k+1)}_i+\sigma(\vect{b}_i -\vect{u}^{(k+1)}_i)>0\}\ ,\\
\op{A}^a_{k+1}&=&\{i\ :\ 
\hat{\bm \lambda}^{(k+1)}_i+\sigma(\vect{a}_i -\vect{u}^{(k+1)}_i) \ge 0\ \ \mathrm{and}\ \  \hat{\bm \lambda}^{(k+1)}_i+\sigma(\vect{b}_i -\vect{u}^{(k+1)}_i)>0\}\ ,\\
\op{A}^b_{k+1}&=&\{i\ :\ 
\hat{\bm \lambda}^{(k+1)}_i+\sigma(\vect{a}_i -\vect{u}^{(k+1)}_i) < 0\ \ \mathrm{and}\ \  \hat{\bm \lambda}^{(k+1)}_i+\sigma(\vect{b}_i -\vect{u}^{(k+1)}_i)\le 0\}\ .
\ees
Now consider the splitting  of $\mat{G}=\vect{K}^*\mat{K} + \beta \mat{I}$ according to the sets of indices $\op{I}_k$ and $\op{A}_k=\op{A}^a_k\cup \op{A}^b$.
More precisely, using M{\footnotesize ATLAB} syntax, we define
$$
\mat{G}^{\mathrm{in}, k} = \mat{G}(\op{I}_k,\op{I}_k)\ ,\ \ \mat{G}^{\mathrm{ia},k}=\mat{G}(\op{I}_k,\op{A}_k) ,\ \ \mat{G}^{\mathrm{ai},k}=\mat{G}(\op{A}_k,\op{I}_k),
\ \ \mat{G}^{\mathrm{aa},k}=\mat{G}(\op{A}_k,\op{A}_k)\ .
$$
Since $\vect{u}^{(k+1)}_i$ is determined for $i\in \op{A}_k$, and $\hat{{\bm \lambda}}^{(k+1)}_i=0$ for  $i\in \op{I}_k$,
the equations corresponding to indices from $\op{I}_k$ in the first system of~\eqref{eq:ssnit} reduce to
\be
\label{eq:disclinsysfin1}
\vect{G}^{\mathrm{in},k}\vect{u}^{(k+1)}_{\mathrm{in}} = (\vect{K}^*\vect{y}_d)_{\mathrm{in}}-\vect{G}^{\mathrm{ia},k}\vect{u}^{(k+1)}_{\mathrm{a}}\ ,
\ee
where $\vect{u}^{(k+1)}_{\mathrm{in}} =\vect{u}^{(k+1)}(\op{I}_k)$, $\vect{u}^{(k+1)}_{\mathrm{a}} =\vect{u}^{(k+1)}(\op{A}_k)$,
$(\vect{K}^*\vect{y}_d)_{\mathrm{in}} = (\vect{K}^*\vect{y}_d)(\op{I}_k)$.
After solving~\eqref{eq:disclinsysfin1}, the remaining components of $\hat{{\bm \lambda}}^{(k+1)}$
are identified by
$$
\hat{{\bm \lambda}}^{(k+1)}(\op{A}_{k+1}) = \vect{G}^{\mathrm{ai},k}\vect{u}^{(k+1)}_{\mathrm{in}}+
\vect{G}^{\mathrm{aa},k}\vect{u}^{(k+1)}_{\mathrm{a}} - (\vect{K}^*\vect{y}_d)(\op{A}_{k+1})\ .
$$
Thus the critical step in solving~\eqref{eq:ssnit} is the reduced system~\eqref{eq:disclinsysfin1}.

We should point out that for large-scale problems
the matrices $\vect{K}$, and thus also~$\vect{G}$ and~$\vect{G}^{\mathrm{in},k}$, are expected to be dense and are not formed. 
Therefore we can solve~\eqref{eq:disclinsysfin1} only by means of iterative solvers. Hence, efficient matrix-free preconditioners
are necessary for accelerating the solution process.  Also note that if $\op{A}_k=\emptyset$, 
then the multigrid preconditioners developed in~\cite{MR2429872} can be used.
Our main contribution in this work is the design of two- and multigrid preconditioners 
for~\eqref{eq:disclinsysfin1} for the case when $\op{A}_k\ne\emptyset$.

%A final remark to related to the system~\eqref{eq:disclinsysfin1} refers to the use of the diagonal matrices $\vect{W}_j$. These should be viewed as 
%diagonal mass matrices obtained via the discrete inner products $\innprd{\cdot}{\cdot}_j$. If we used the exact $L^2$-norms instead of
%the mesh-dependent norms in~\eqref{eq:reducedoptdiscrete}, then~\eqref{eq:disclinsys} would still be the correct system to solve, but it would
%not be reduced to~\eqref{eq:disclinsysfin1}. As will be shown in the  Section~\ref{ssec:multigriddef}, it is precisely the 
%form~\eqref{eq:disclinsysfin1} that lends itself to efficient multigrid preconditioning.

\subsection{The two-grid preconditioner}
\label{ssec:multigriddef}
Let $j\ge 1$ be a fixed level, to which we shall refer as the fine level. We will first define a two-grid preconditioner for
the matrix $\vect{G}_j^{\mathrm{in},k}$ of~\eqref{eq:disclinsysfin1} that will involve inverting a certain principal minor of $\vect{G}_{j-1}$. 
To design the two-grid  preconditioner we will  regard the matrix $\vect{G}_j^{\mathrm{in},k}$ as an operator between finite element spaces.

\subsubsection{Construction of the two-grid preconditioner}
To simplify notation, consider a fixed index set 
$\op{I}^{(j)}\subseteq\{1,\dots,N_j\}$; this should be regarded as one of the iterates 
$\op{I}_k^{(j)}$ encountered in the solution process described in the previous section.
The linear system we investigate has the form
\be
\label{eq:disclinsysfin}
\vect{G}_j^{\mathrm{in}}\tilde{\vect{u}} = \tilde{\vect{r}}\ ,
\ee
where 
$\vect{G}_j^{\mathrm{in}}=\vect{G}_j(\op{I}^{(j)}, \op{I}^{(j)}), \tilde{\vect{u}}, \tilde{\vect{r}}\in \R^{|\op{I}^{(j)}|}\ .$
% HERE 
We will call vertex $P_i$ or an index $i$ \emph{inactive} if $i\in \op{I}^{(j)}$.
Define the \emph{fine inactive space} by
$$\op{V}_j^{\mathrm{in}} = \mathrm{span}\{\varphi_i^{(j)}\ :\ i\in \op{I}^{(j)}\}\ ,$$
and the \emph{fine inactive domain} by
\be
\label{eq:coarseinactdomain}
\Omega_j^{\mathrm{in}} = \bigcup_{i\in \op{I}^{(j)}}\mathrm{supp}(\varphi_i^{(j)})\ ,
\ee
where $\mathrm{supp}(u)$ is the support of the function $u$.
The critical component of the preconditioner is the definition of the \emph{coarse inactive index set}: 
\be
\label{eq:coarseinactdef}
\op{I}^{(j-1)}\stackrel{\mathrm{def}}{=}\{i\in \{1,\dots, N_{j-1}\}\ :\ \mathrm{supp}(\varphi_i^{(j-1)})\subseteq \Omega_j^{\mathrm{in}}\}\ .
\ee
To be more precise, if $i_f$ is the index in the fine numbering associated to the coarse index $i_c$, that is, $P^{(j)}_{i_f} = P^{(j-1)}_{i_c}$, 
the definition above is equivalent to saying  that $i_c\in \op{I}^{(j-1)}$ if $i_f$ together with all its neighbouring \emph{fine} indices are
inactive. In Figure~\ref{fig:inactiveset} we depict a set of inactive fine nodes by filled circles, and the associated coarse inactive nodes
 by hollow circles. The coarse nodes that are not inactive are shown with a square hollow marker.
The \emph{coarse inactive space} is now defined to be
\be
\label{eq:coarseinactspace}
\op{V}_{j-1}^{\mathrm{in}} = \mathrm{span}\{\varphi_i^{(j-1)}\ :\ i\in \op{I}^{(j-1)}\}\ ,
\ee
and the \emph{coarse inactive domain}  is given by
$$\Omega_{j-1}^{\mathrm{in}} = \bigcup_{i\in \op{I}^{(j-1)}}\mathrm{supp}(\varphi_i^{(j-1)})\ .$$
Note that $\op{V}^{\mathrm{in}}_{j-1}\subseteq \op{V}^{\mathrm{in}}_{j}$ and $\Omega_{j-1}^{\mathrm{in}}\subseteq\Omega_j^{\mathrm{in}}$.
In connection with $\Omega_{j}^{\mathrm{in}}$ we also 
define \emph{numerical interior} $\mathrm{Int}_{\mathrm{n}} \Omega_{j}^{\mathrm{in}}$ of $\Omega_{j}^{\mathrm{in}}$ (relative to $\Omega_{j-1}^{\mathrm{in}}$)
to be the  union of all \emph{coarse} elements $T$ included in the fine inactive domain, that is, $T\subseteq \Omega_{j}^{\mathrm{in}}$, and whose vertices 
are either in $\op{I}^{(j-1)}$ or lie on the boundary of $\Omega$ (see Figure~\ref{fig:inactiveset}). Furthermore,
let the \emph{numerical boundary} of $\Omega_{j}^{\mathrm{in}}$ (relative to $\Omega_{j-1}^{\mathrm{in}}$) be given by
$$\partial_{\mathrm{n}}\Omega_{j}^{\mathrm{in}} = \Omega_{j}^{\mathrm{in}}\setminus \mathrm{Int}_{\mathrm{n}} \Omega_{h}^{\mathrm{in}}\ .$$
Note that $\mathrm{Int}_{\mathrm{n}} \Omega_{j}^{\mathrm{in}}\subseteq \Omega_{j-1}^{\mathrm{in}}$. 

\begin{figure}[!htb]
\begin{center}
        \includegraphics[width=9in]{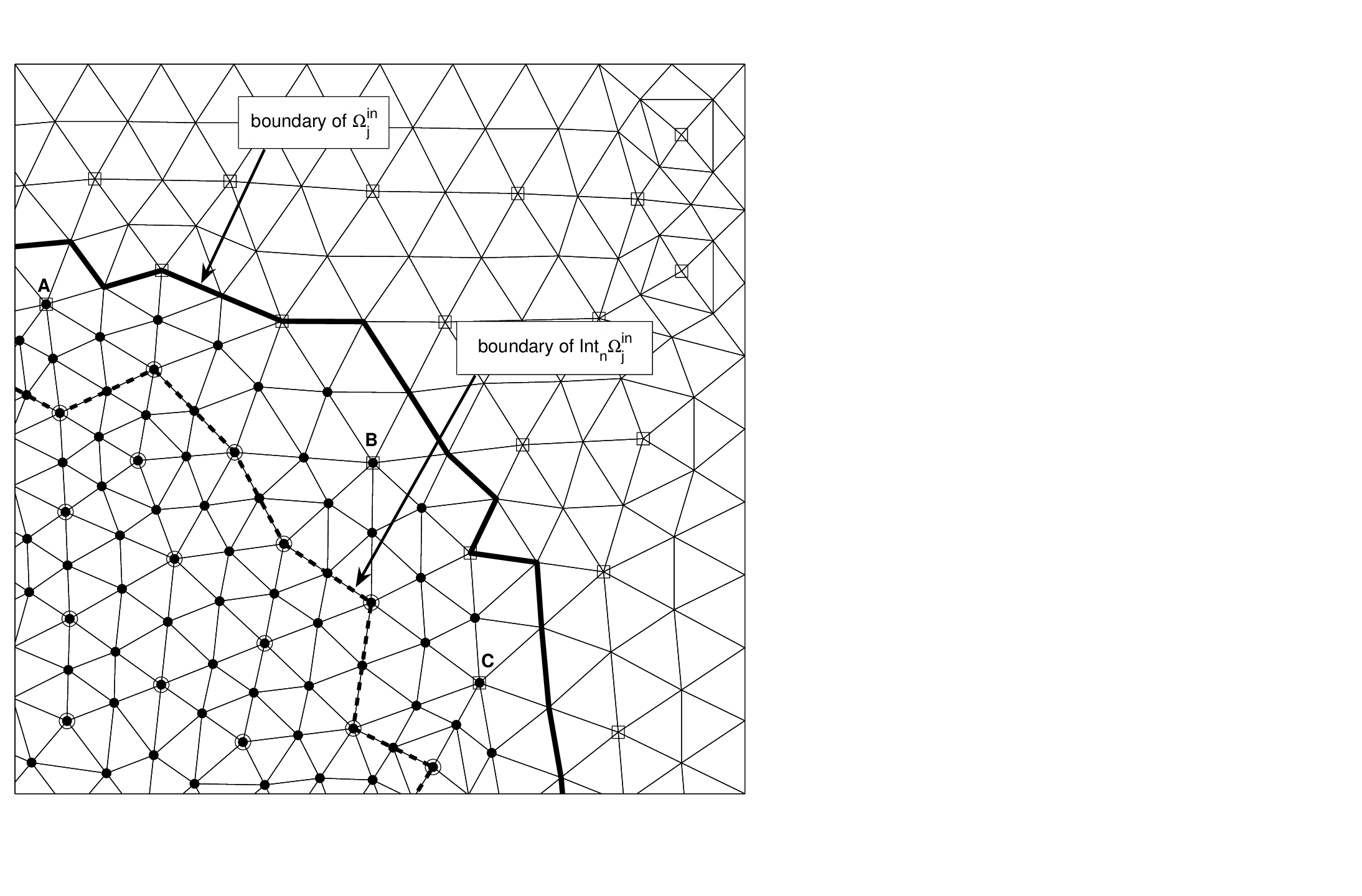}%constr_unconstr3.eps
\caption{Inactive fine nodes are marked with a dot, inactive coarse nodes are marked with a circle, and active coarse nodes are marked
with a square. Note that the coarse nodes $A, B, C$ are active because the supports of the corresponding nodal basis functions are not included in
$\Omega_j^{\mathrm{in}}$, even though the nodes themselves lie in the interior of $\Omega_j^{\mathrm{in}}$. The area between the solid and the dotted 
lines lies in the numerical boundary
of  $\Omega_j^{\mathrm{in}}$.} 
\label{fig:inactiveset}
\end{center}
\end{figure}

Let now $\op{W}^{\mathrm{in}}_{j-1}$ be the $L^2$-orthogonal complement of $\op{V}_{j-1}^{\mathrm{in}}$ in
$\op{V}_j^{\mathrm{in}}$ and define the $L^2$-projectors
$$\pi^{\mathrm{in}}_{j-1}:\op{V}^{\mathrm{in}}_j\to \op{V}^{\mathrm{in}}_{j-1}\ ,\ \ \ \ 
\rho^{\mathrm{in}}_{j-1}:\op{V}^{\mathrm{in}}_j\to \op{W}^{\mathrm{in}}_{j-1}\ ,$$ 
so $\pi_{j-1}^{\mathrm{in}}+\rho_{j-1}^{\mathrm{in}}$ is the identity on
$\op{V}_j^{\mathrm{in}}$.  Furthermore,  let 
$\op{E}_j^{\mathrm{in}}: \op{V}_j^{\mathrm{in}} \to \op{V}_j$ be the natural embedding obtained by extending a function with zero outside of
$\Omega_j^{\mathrm{in}}$.  We may
occasionally omit the explicit use of the embedding operators to ease notation.  We also define the
restriction $R_{j-1}:\op{V}_{j}\to\op{V}_{j-1}$ as the adjoint with
respect to $\innprd{\cdot}{\cdot}_j$ to the embedding of $\op{V}_{j-1}$
in $\op{V}_j$, that is, for $u\in \op{V}_j$
\be
\label{eq:restrdef}
\innprd{u}{v}_j = \innprd{R_{j-1}u}{v}_{j-1},\ \ \forall v\in \op{V}_{j-1}\ ,
\ee
and let $P_{j}^{\mathrm{in}}: \op{V}_j\to \op{V}_j^{\mathrm{in}}$
the projection with respect to $\innprd{\cdot}{\cdot}_j$ given by
$$P_j^{\mathrm{in}} \left(\sum_{i=1}^{N_j} u_i \varphi_i^{(j)}\right) = \sum_{i\in \op{I}^{(j)}} u_i \varphi_i^{(j)}\ .$$
Furthermore, denote by $\pi_{j}$ the orthogonal $L^2$-projector onto the space $\op{V}_j$.
From the equivalence~\eqref{eq:equivmeshipl2} of the discrete norms with the $L^2$-norm it follows that
\be
\label{eq:reststab}
\nnorm{R_{j}u}\le C\nnorm{u}\ ,\ j=0,1,\dots,\ .
\ee
for some mesh-independent constant $C$.

The matrix $\vect{G}_j^{\mathrm{in}}$ in~\eqref{eq:disclinsysfin}  represents the operator
\be
\label{eq:Gopdef}
\op{G}_j^{\mathrm{in}} = P_j^{\mathrm{in}}\left(\op{K}_j^*\op{K}_j + \beta I\right)\op{E}_j^{\mathrm{in}}\ ,
\ee
and thus matrix-vector products for $\vect{G}_j^{\mathrm{in}}$ are computed accordingly. We define a \emph{two-grid preconditioner} $M_j^{\mathrm{in}}$ 
for $\op{G}_j^{\mathrm{in}}$ as in  Dr{\u a}g{\u a}nescu and Dupont~\cite{MR2429872} for the unconstrained case: 
\be
\label{eq:tgprecdef}
\op{M}_j^{\mathrm{in}} = \overbrace{P_{j-1}^{\mathrm{in}}\left(\op{K}_{j-1}^*\op{K}_{j-1}+\beta I\right)\op{E}_{j-1}^{\mathrm{in}}}^{\op{G}_{j-1}^{\mathrm{in}}}
  \pi_{j-1}^{\mathrm{in}} + \beta \rho_{j-1}^{\mathrm{in}}\ .
\ee
Note that the inverse $\op{S}_j^{\mathrm{in}}$  of the preconditioner $M_j^{\mathrm{in}}$ has the explicit form
\be
\label{eq:tgprecdefinv}
\op{S}_j^{\mathrm{in}}\stackrel{\mathrm{def}}{=}
(\op{M}_j^{\mathrm{in}})^{-1} = \left(\op{G}_{j-1}^{\mathrm{in}}\right)^{-1}\pi_{j-1}^{\mathrm{in}} + 
\beta^{-1} \rho_{j-1}^{\mathrm{in}}\ .
\ee
Since none of the matrices representing $\op{G}_j^{\mathrm{in}}$ or $\op{M}_j^{\mathrm{in}}$ are formed, it is the operator~$\op{S}_j^{\mathrm{in}}$ that we 
need to apply in practice, so the explicit formula~\eqref{eq:tgprecdefinv} for $(\op{M}_j^{\mathrm{in}})^{-1} $ is essential. In light of
this fact we should remark that, if  the projection~$\pi_{j-1}^{\mathrm{in}}$ were to be replaced by a restriction operator, as is the case in 
classical multigrid, then~\eqref{eq:tgprecdefinv}  would no longer hold.

\subsubsection{Matrix-form of the preconditioner}
\label{sssec:matrixform}
In order to describe the matrix-form of the preconditioner in M{\footnotesize ATLAB} form we introduce the mass matrix~$\mat{L}_j$ on $\op{V}_j$, and we denote by 
$\mat{J}_{j}\in M_{N_{j}\times N_{j-1}}$ the matrix representing the interpolation operator $\op{I}_{j}|_{\op{V}_{j-1}}\in
\mathfrak{L}(\op{V}_{j-1}, \op{V}_j)$. 
%Furthermore, let \mbox{$\mat{R}_{j-1}=2^{-d}\mat{J}_{j}^T\in M_{N_{j-1}\times N_{j}}$} be the restriction operator. 
We assume that the inactive indices on level $j$
are stored  in the  vector $\mat{i}_j$,  and define the matrices
\bes
\mat{L}^I_j = \mat{L}_{j}(\mat{i}_{j},\mat{i}_{j}),\ \ \mat{J}^I_{j} = \mat{J}_j(\mat{i}_{j},\mat{i}_{j-1}),\
\ \ \mat{E}_{j}^I = \mat{I}(:,\mat{i}_{j}),\ \ 
\mat{P}_{j}^I = (\mat{E}_j^I)^T,
\ees
where we used M{\footnotesize ATLAB} syntax for the selection of submatrices.
Note that $\mat{E}_{j}^I$ represents the extension operator $\op{E}_{j}^{\mathrm{in}}$ and $\mat{P}_{j}^I$ the operator $P_j^{\mathrm{in}}$. We can now write the 
projector-operator $\pi_{j-1}^{\mathrm{in}}$ in matrix-form as
$${\bm \Pi}_{j-1}^{I} = \left(\mat{L}^I_{j-1}\right)^{-1}\cdot 
(\mat{J}^I_{j})^T\cdot \mat{L}^I_j\ ,$$
and $\rho_{j-1}^{\mathrm{in}}$ is represented by $(\mat{I}-\mat{J}^I_{j}{\bm \Pi}_{j-1}^{I})$. So $\op{M}_j^{\mathrm{in}} $ is represented by
the matrix 
$$
\mat{M}_j^I = \mat{J}_{j}^I\overbrace{\mat{P}_{j-1}^I(\mat{K}_{j-1}^T \mat{K}_{j-1}+ \beta \mat{I})\mat{E}_{j-1}^I}^{\mathrm{represents}\ \op{G}_{j-1}^{\mathrm{in}}}
{\bm \Pi}_{j-1}^{I}+\beta (\mat{I}-\mat{J}^I_{j}{\bm \Pi}_{j-1}^{I})\ ,
$$
and $\op{S}_j^{\mathrm{in}}$ is represented by
\begin{equation}
\label{eq:matformprec}
\mat{S}_j^I = \mat{J}_{j}^I\left(\mat{P}_{j-1}^I(\mat{K}_{j-1}^T \mat{K}_{j-1}+ \beta \mat{I})\mat{E}_{j-1}^I\right)^{-1}
{\bm \Pi}_{j-1}^{I}+\beta^{-1} (\mat{I}-\mat{J}^I_{j}{\bm \Pi}_{j-1}^{I})\ .
\end{equation}
We should point out that, due to the presence of ${\bm \Pi}_{j-1}^{I}$, the matrices $\mat{M}_j^I$ and $\mat{S}_j^I$ are slightly nonsymmetric, hence one
has to employ solvers for nonsymmetric systems in connection with the $\mat{M}_j^I $ preconditioner. We found that conjugate gradient squared (CGS)
is quite efficient (see Section~\ref{sec:numerics}).

A symmetric alterative to $\mat{S}_j^I$ is obtained by replacing the orthogonal projection ${\bm \Pi}_{j-1}^{I}$ in~\eqref{eq:matformprec} with 
the matrix ${\bm R}_{j-1}^I= {\bm R}_{j-1}(\mat{i}_{j-1},\mat{i}_{j}),$ 
where the matrix ${\bm R}_{j-1}$ represents the restriction operator $R_{j-1}$ defined in~\eqref{eq:restrdef}. This gives rise to the
preconditioner
\begin{equation}
\label{eq:matformprecsym}
\widehat{\mat{S}}_j^I = \mat{J}_{j}^I\left(\mat{P}_{j-1}^I(\mat{K}_{j-1}^T \mat{K}_{j-1}+ \beta \mat{I})\mat{E}_{j-1}^I\right)^{-1}
{\bm R}_{j-1}^{I}+\beta^{-1} (\mat{I}-\mat{J}^I_{j}{\bm R}_{j-1}^{I})\ .
\end{equation}
It can be easily verified that $\widehat{\mat{S}}_j^I$ is symmetric, hence it can be used in practice as a preconditioner to conjugate gradient (CG). We conduct some of
our numerical experiments in Section~\ref{sec:numerics} using multigrid preconditioners defived from $\widehat{\mat{S}}_j^I$  rather than from ${\mat{S}}_j^I$.
Note that for a uniform grid in $d$ dimensions we have ${\bm R}_{j-1}= 2^{-d}{\bm J}_{j}^T$.

\subsubsection{Spectral distance estimation}
To quantify the quality of the two-grid preconditioner $\op{M}_j^{\mathrm{in}}$ we will estimate the \emph{spectral distance}  $d_{\sigma}$ 
%(see Appendix~\ref{sec:specdist} for definition) 
between $(\op{G}_j^{\mathrm{in}})^{-1}$ and its inverse $\op{S}_j^{\mathrm{in}}$ given by~\eqref{eq:tgprecdefinv}. The use of the spectral distance ensures that such estimates
extend automatically to multigrid preconditioners, as discussed in Section~\ref{ssec:multigrid} and Section~\ref{sec:mganalysis} (see also~\cite{MR2429872, Dra:Pet:ipm}). 
We briefly recall the definition of the
spectral distance, as introduced in~\cite{MR2429872}. Given a Hilbert space $(\op{X}, \innprd{\cdot}{\cdot})$ we denote by 
$\mathfrak{L}_+(\op{X})$ the set of operators with positive definite symmetric part:
$$ \mathfrak{L}_+(\op{X})= \left\{T\in \mathfrak{L}(\op{X}): \innprd{T u}{u} > 0,\ \ \forall u\in
\op{X}\setminus\{0\}\right\}\ .$$  
Let the joined numerical range of $S, T\in\mathfrak{L}_+(\op{X})$ be given by
\begin{eqnarray*}
W(S, T) = \left\{ \frac{\innprd{S_{\mathbb C} w}{w}}{\innprd{T_{\mathbb C} w}{w}}\ :\ w \in \op{X}^{\mathbb{C}}\setminus \{0\}\right\}\ ,
\end{eqnarray*}
where $T_{\mathbb C}(u+{\bf i}v) = T(u)+{\bf i}T(v)$ is the complexification of $T$.
The spectral distance between $S, T \in \mathfrak{L}_+(\op{X})$, is a measure of spectral equivalence between $S$ and $T$, and it is defined by
\begin{eqnarray*}
d_{\sigma}(S, T)& =& \sup\{\abs{\ln z}\ :\  z \in W(S, T) \}\ ,
\end{eqnarray*}
where $\ln$ is the branch of the logarithm corresponding to 
$\mathbb{C}\setminus (-\infty, 0]$.
Following Lemma 3.2 in~\cite{MR2429872}, if $W(S,T)\subseteq \op{B}_{\alpha}(1) = \{z\in\mathbb{C}\ :\ \abs{z-1} < \alpha\}$ with $\alpha\in (0,1)$, then
\begin{equation}
\label{eq:distineq}
d_{\sigma}(S, T)\le \frac{\abs{\ln (1-\alpha)}}{\alpha} \sup\{|z-1|\  : \ z\in W(S,T)\}\ ,
\end{equation}
which offers a practical way to estimate the spectral distance when it is small.
The spectral distance serves 
both as a means to quantify the quality of a preconditioner
and also as a convenient analysis tool for multigrid algorithms. Essentially, if two operators $S, T$ satisfy
$$1-\delta\le \Abs{\frac{\innprd{S_{\mathbb C} w}{w}}{\innprd{T_{\mathbb C} w}{w}}}\le 1+ \delta\ ,\  \ \forall w \in \op{X}^{\mathbb{C}}\setminus \{0\},$$
with $\delta \ll 1$, then $d_{\sigma}(S, T) \approx \delta$. 
If $N\approx G^{-1}$ is a preconditioner for $G$, then both $d_{\sigma}(N, G^{-1})$ and $d_{\sigma}(N^{-1}, G)$ (quantities which are
equal if $G, N$ are symmetric) are shown to control the spectral radius $\rho(I-N G)$ (see~\cite{Dra:Soa:mgstokes})
which is an accepted quality-measure for a preconditioner. The advantage of using $d_{\sigma}$ over $\rho(I-N G)$ is that the former is a true distance function.

The  main result of this article is the following theorem.
\begin{theorem}
\label{maintheorem}
If the operators $\op{K}$ and $\op{K}_j$ satisfy Condition~\textnormal{\ref{mgipm:cond:condsmooth}} and the weights  $w_i^{(j)}$  are uniform, 
then there exists $\delta>0$ and a constant $C(\op{K})$~\textnormal{(}see Condition~\textnormal{\ref{mgipm:cond:condsmooth}}~\textnormal{)} 
independent of $j$ and of the inactive set so that
\be
\label{eq:mainestimate}
d_{\sigma}\left(\left(\op{G}_j^{\mathrm{in}}\right)^{-1}, \op{S}_j^{\mathrm{in}}\right) \le C \beta^{-1}\left(h_j^2 + \sqrt{\mu_{j}^{\mathrm{in}}}\right) \ ,
\ee 
where $\mu_{j}^{\mathrm{in}}$ is the Lebesgue measure of $\partial_{\mathrm{n}}\Omega_{j}^{\mathrm{in}}$, provided that 
$$\beta^{-1}\left(h_j^2 + \sqrt{\mu_{j}^{\mathrm{in}}}\right)<\delta\ .$$
\end{theorem}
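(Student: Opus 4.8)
The plan is to reduce~\eqref{eq:mainestimate} to a single bilinear-form estimate for $\op{M}_j^{\mathrm{in}}-\op{G}_j^{\mathrm{in}}$ and then to split that estimate into an ``interior'' part, handled as in the unconstrained theory of Dr{\u a}g{\u a}nescu--Dupont~\cite{MR2429872}, and a ``boundary-layer'' part supported on $\partial_{\mathrm{n}}\Omega_j^{\mathrm{in}}$, handled by the $L^\infty$ bounds of Condition~\ref{mgipm:cond:condsmooth}. Writing $\op{T}_\ell^{\mathrm{in}}=P_\ell^{\mathrm{in}}\op{K}_\ell^*\op{K}_\ell\op{E}_\ell^{\mathrm{in}}$ one has $\op{G}_j^{\mathrm{in}}=\beta I+\op{T}_j^{\mathrm{in}}$, and since $\op{G}_{j-1}^{\mathrm{in}}=\beta I+\op{T}_{j-1}^{\mathrm{in}}$ and $\pi_{j-1}^{\mathrm{in}}+\rho_{j-1}^{\mathrm{in}}$ is the identity on $\op{V}_j^{\mathrm{in}}$, also $\op{M}_j^{\mathrm{in}}=\beta I+\op{T}_{j-1}^{\mathrm{in}}\pi_{j-1}^{\mathrm{in}}$; hence
\[ \op{M}_j^{\mathrm{in}}-\op{G}_j^{\mathrm{in}}=\op{T}_{j-1}^{\mathrm{in}}\pi_{j-1}^{\mathrm{in}}-\op{T}_j^{\mathrm{in}}\ . \]
Since $P_j^{\mathrm{in}}$ is a coordinate projection it is self-adjoint for $\innprd{\cdot}{\cdot}_j$, so $\mathrm{Re}\,\innprd{\op{T}_j^{\mathrm{in}}w}{w}_j=\enorm{\op{K}_j\op{E}_j^{\mathrm{in}}w}_j^2\ge0$ and therefore $\mathrm{Re}\,\innprd{\op{G}_j^{\mathrm{in}}w}{w}_j\ge\beta\enorm{w}_j^2$ for all $w$ in the complexification of $\op{V}_j^{\mathrm{in}}$. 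Thus it suffices to establish the key estimate
\[ \Abs{\innprd{(\op{T}_{j-1}^{\mathrm{in}}\pi_{j-1}^{\mathrm{in}}-\op{T}_j^{\mathrm{in}})w}{w}_j}\le C\left(h_j^2+\sqrt{\mu_j^{\mathrm{in}}}\right)\enorm{w}_j^2\ ; \]
granting this, every point of $W(\op{M}_j^{\mathrm{in}},\op{G}_j^{\mathrm{in}})$ lies within $\alpha:=C\beta^{-1}(h_j^2+\sqrt{\mu_j^{\mathrm{in}}})$ of $1$, and for $\alpha<\delta$ small enough the estimate~\eqref{eq:distineq} and the spectral-distance machinery of~\cite{MR2429872,Dra:Pet:ipm} (which reduces a bound on $d_\sigma\left((\op{G}_j^{\mathrm{in}})^{-1},\op{S}_j^{\mathrm{in}}\right)$ to one on $d_\sigma(\op{M}_j^{\mathrm{in}},\op{G}_j^{\mathrm{in}})$) give $d_\sigma\left((\op{G}_j^{\mathrm{in}})^{-1},\op{S}_j^{\mathrm{in}}\right)\le C\alpha$, which is~\eqref{eq:mainestimate}.

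To prove the key estimate I would first pass from the discrete operators to the continuous kernel $\op{K}$. Using the smoothing property~\eqref{mgipm:cond:par_smooth} (so that $\op{K}u\in H^2(\Omega)\hookrightarrow L^\infty(\Omega)$, recalling $d\le3$), the smoothed-approximation estimate~\eqref{mgipm:cond:consist}, and the second-order accuracy of the quadrature defining $\innprd{\cdot}{\cdot}_j$ on products of finite element functions, one replaces $\innprd{\op{T}_j^{\mathrm{in}}w}{w}_j$ by $\nnorm{\op{K}\op{E}_j^{\mathrm{in}}w}^2$ and $\innprd{\op{T}_{j-1}^{\mathrm{in}}\pi_{j-1}^{\mathrm{in}}w}{w}_j$ by an $L^2$ form in $\op{K}\op{E}_{j-1}^{\mathrm{in}}\pi_{j-1}^{\mathrm{in}}w$ (together with the mixed terms obtained by writing $w=\pi_{j-1}^{\mathrm{in}}w+\rho_{j-1}^{\mathrm{in}}w$), at the cost of $O(h_j^2)\enorm{w}_j^2$; the hypothesis that the weights are uniform enters here, to identify the $P_\ell^{\mathrm{in}}$ with $\innprd{\cdot}{\cdot}_j$-orthogonal projectors, to keep the restriction operators uniformly bounded, and to gain the extra order in the quadrature errors against coarse functions. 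What then remains is to bound a bilinear form built from $\op{K}$ and the projectors $\pi_j^{\mathrm{in}},\pi_{j-1}^{\mathrm{in}},\rho_{j-1}^{\mathrm{in}}$, essentially controlled by expressions of the type $\innprd{\op{K}\rho_{j-1}^{\mathrm{in}}w}{\op{K}v}$ with $w,v$ supported in $\Omega_j^{\mathrm{in}}$.

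The heart of the matter is to split this remaining form into its contributions over $\mathrm{Int}_{\mathrm{n}}\Omega_j^{\mathrm{in}}$ and over $\partial_{\mathrm{n}}\Omega_j^{\mathrm{in}}$. On the numerical interior every coarse element lies inside $\Omega_j^{\mathrm{in}}$ with all of its vertices in $\op{I}^{(j-1)}$ or on $\partial\Omega$, so there $\op{V}_{j-1}^{\mathrm{in}}$ agrees with the full coarse finite element space and the unconstrained analysis of~\cite{MR2429872} applies verbatim: since $\op{K}w\in H^2$ and the $L^2$-projection onto the coarse space approximates the identity on $H^2$-functions to order $h_{j-1}^2=O(h_j^2)$, the interior contribution is $O(h_j^2)\enorm{w}_j^2$. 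On the numerical boundary, of Lebesgue measure $\mu_j^{\mathrm{in}}$, this approximation is unavailable --- the coarse nodal basis functions there are excluded from $\op{V}_{j-1}^{\mathrm{in}}$ even though the fine region is inactive --- so one falls back on the uniform $L^\infty$ bounds: by~\eqref{mgipm:cond:par_smooth} (respectively~\eqref{mgipm:cond:unif}) the functions $\op{K}w$ and $\op{K}_j^*\op{K}_jw$ are bounded in $L^\infty$ by $C\enorm{w}_j$, so each boundary-layer term has the form $\int_{\partial_{\mathrm{n}}\Omega_j^{\mathrm{in}}}(\text{a function bounded by }C\enorm{w}_j)\cdot(\text{a finite element function})$, and Cauchy--Schwarz together with $\nnorm{\chi_{\partial_{\mathrm{n}}\Omega_j^{\mathrm{in}}}}=\sqrt{\mu_j^{\mathrm{in}}}$ bounds it by $C\sqrt{\mu_j^{\mathrm{in}}}\,\enorm{w}_j^2$. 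Adding the two contributions gives the key estimate.

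The step I expect to be the main obstacle is exactly this interface bookkeeping: rigorously isolating the ``bulk'' of $w$, and of the coarse correction terms generated by $\pi_{j-1}^{\mathrm{in}}$ (which is a \emph{global} $L^2$-projection, not a local one), to which the $H^2$-approximation estimate applies, from a ``boundary-layer'' remainder for which only the $L^\infty\times\sqrt{\text{measure}}$ bound is available --- all with constants independent of the inactive set $\op{I}^{(j)}$. Controlling the pollution of the interior estimate by the boundary layer is precisely what forces the exponent $\tfrac12$ on $\mu_j^{\mathrm{in}}$, and hence, in the generic case $\mu_j^{\mathrm{in}}\sim h_j$ (the free boundary crossing $O(1)$ coarse elements), the suboptimal rate $\sqrt{h_j}$ announced in the introduction.
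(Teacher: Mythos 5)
Your plan follows the paper's own route: the same identity $\op{M}_j^{\mathrm{in}}-\op{G}_j^{\mathrm{in}}=\op{T}_{j-1}^{\mathrm{in}}\pi_{j-1}^{\mathrm{in}}-\op{T}_j^{\mathrm{in}}$ (the paper's \eqref{eq:diffGM}), the same telescoping into quadrature errors, discrete-versus-continuous consistency terms, a coordinate-projector term handled by the $L^\infty$ bound \eqref{mgipm:cond:unif} times $\sqrt{\mu_j^{\mathrm{in}}}$, and a projection/restriction term; and the same final reduction, via $\innprd{\op{G}_j^{\mathrm{in}}u}{u}_j\ge\beta\innprd{u}{u}_j$, from a perturbation bound of size $C(h_j^2+\sqrt{\mu_j^{\mathrm{in}}})$ to the spectral-distance estimate (the paper bounds the operator norm $\nnorm{\op{G}_j^{\mathrm{in}}-\op{M}_j^{\mathrm{in}}}$ in Proposition~\ref{prop:twogridprec} and then passes through $I-(\op{G}_j^{\mathrm{in}})^{-1/2}\op{M}_j^{\mathrm{in}}(\op{G}_j^{\mathrm{in}})^{-1/2}$, which is equivalent to your numerical-range formulation).

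The one step you leave open --- and explicitly flag as the main obstacle, namely how to split the contribution of the \emph{global} projection $\pi_{j-1}^{\mathrm{in}}$ into a bulk part with $O(h_j^2)$ accuracy and a boundary-layer part, with constants independent of the inactive set --- is exactly where the paper's only genuinely new ingredient sits, Lemma~\ref{lma:l2projapprox}, and it is closed by duality rather than by localizing $\pi_{j-1}^{\mathrm{in}}u$. One shows $\nnorm{(I-\pi_{j-1}^{\mathrm{in}})u}_{\Hneg{2}(\Omega)}\le C(h_j^2+\sqrt{\mu_j^{\mathrm{in}}})\nnorm{u}$ by pairing with an arbitrary $v\in H^2(\Omega)\cap H_0^1(\Omega)$, using orthogonality to subtract the nodal interpolant $v_{j-1}$ of $v$ built only from the \emph{inactive} coarse basis functions, and estimating $\nnorm{v-v_{j-1}}$ elementwise: on $\mathrm{Int}_{\mathrm{n}}\Omega_j^{\mathrm{in}}$ the constrained interpolant coincides with the standard one, giving $Ch_j^2\abs{v}_{H^2}$, while on $\partial_{\mathrm{n}}\Omega_j^{\mathrm{in}}$ one only uses $\nnorm{v_{j-1}}_{L^\infty(T)}\le\nnorm{v}_{L^\infty(T)}$ and Sobolev embedding ($d\le3$) to get $2\sqrt{\mu_j^{\mathrm{in}}}\,\nnorm{v}_{L^\infty}\le C\sqrt{\mu_j^{\mathrm{in}}}\,\nnorm{v}_{H^2}$. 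The term $\innprd{\op{K}(I-\pi_{j-1}^{\mathrm{in}})u}{\op{K}v}$ in your ``remaining form'' is then controlled by the smoothing bound $\nnorm{\op{K}z}\le C\nnorm{z}_{\Hneg{2}}$ of Lemma~\ref{lma:manyresults}(b), and the $(I-R_{j-1})$ piece by \eqref{eq:restr_approxK} with $p=2$ (this is where uniformity of the grid/weights buys the $h_j^2$ rather than $h_j$, cf.\ Corollary~\ref{cor:nonunifgrid}). So the interface bookkeeping you were worried about never has to be performed on $\pi_{j-1}^{\mathrm{in}}u$ itself; it is transferred to the smooth dual test function, where the interior/boundary split is elementary. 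With that lemma in hand your outline matches the paper's proof.
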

We postpone the proof of Theorem~\ref{maintheorem}  until Section~\ref{sec:analysis}.
\begin{remark}
\label{rem:sqrtest}
The natural question arises as to how to estimate $\mu_{j}^{\mathrm{in}}$. In the  worst case scenario
there are no coarse inactive nodes, so $\Omega_{j-1}^{\mathrm{in}}=\emptyset$; 
therefore $\partial_{\mathrm{n}}\Omega_{j}^{\mathrm{in}} = \Omega$, case
in which the two-grid preconditioner is $\beta I$, so essentially there is no preconditioner.
However, if $u$ is the solution of~\eqref{eq:reducedoptprob} and 
the continuous inactive set defined by~\mbox{$\Omega^{\mathrm{in}}=\{x\in \Omega\ : \ u(x)>0\}$} is 
a domain with Lipschitz boundary, then the discrete inactive set 
$\Omega_j^{\mathrm{in}}$ is expected to be  close to $\Omega^{\mathrm{in}}$ provided that
a good initial guess at the inactive set is available. In this case we expect that
$\partial_{\mathrm{n}}\Omega_{j}^{\mathrm{in}}$ will lie within $C h_j$ of the topological
boundary of $\Omega^{\mathrm{in}}$, therefore 
$$\mu_{j}^{\mathrm{in}}\approx C h_j\ ,$$
where $C$ is proportional to  the $(d-1)$-dimensional measure of $\partial\Omega^{\mathrm{in}}$.
Hence, the estimate~\eqref{eq:mainestimate} truly implies 
\be
\label{eq:mainestimatesqrt}
d_{\sigma}\left(\left(\op{G}_j^{\mathrm{in}}\right)^{-1}, \op{S}_j^{\mathrm{in}}\right) \le C \frac{\sqrt{h}_j}{\beta}\ ,
\ee 
which is consistent with the numerical experiments in Section~\ref{sec:numerics}. Under certain, reasonable
assumptions we will show that the estimate~\eqref{eq:mainestimatesqrt} extends to multilevel preconditioners.
\end{remark}

In case the grid is quasi-uniform but not uniform we apply Theorem~\ref{maintheorem} to the matrix
$\widetilde{\mat{K}}_j \stackrel{\mathrm{def}}{=} \mat{W}_j \mat{K}_j$.
The important aspect in the estimate 
 is the verification of Condition~\textnormal{\ref{mgipm:cond:condsmooth}} by
$\widetilde{\mat{K}}_j$. Following~\cite{Dra:Pet:ipm} we  introduce the following indirect measure of grid-smoothness:
for each $j$  we consider a $C^2$-function $w_j:\overline{\Omega}\to\R$ so that
\be
\label{eq:wjdef}
w_j(P_i^{(j)}) = w_i^{(j)},\ \ \forall i=1,2,\dots, N_j,
\ee
with $w_i^{(j)}$ given by~\eqref{eq:weighdef}. With this notation the matrix $\widetilde{\mat{K}}_j$ represents the
operator~\mbox{$\widetilde{\op{K}}_j\in \mathfrak{L}(\op{V}_j)$} defined by
\be
\label{eq:ktildef}
\widetilde{\op{K}}_j u\stackrel{\mathrm{def}}{=}\op{I}_j(w_j\cdot (\op{K}_j u))\ .
\ee
Note that, because the grids are hierarchical ($\op{T}_{j}$ is obtained from $\op{T}_{j-1}$ by adding nodes),
the function $w_j$ can serve for defining all operators $\widetilde{\op{K}}_l$, for $l=0, 1, \dots, j$.
Consider $j$ fixed, and define $\widetilde{\op{K}}\in\mathfrak{L}(L^2(\Omega))$ by
$$
\widetilde{\op{K}} u\stackrel{\mathrm{def}}{=}w_j\cdot (\op{K}_j u)\ .
$$
By Proposition~4.8 in~\cite{Dra:Pet:ipm}, the operators $(\widetilde{\op{K}}_l)_{l=0,1,\dots, j}$ together with
the continuous operator $\widetilde{\op{K}}$ satisfy Condition~\ref{mgipm:cond:condsmooth}
with 
$$C(\widetilde{\op{K}}) = \nnorm{w_{j}}_{W_2^{\infty}(\Omega)} C(\op{K})\ .$$
Thus we establish the following corollary.
\begin{corollary}
\label{cor:nonunifgrid}
If the operators $\op{K}$ and $\op{K}_j$ satisfy Condition~\textnormal{\ref{mgipm:cond:condsmooth}} and $w_j\in C^2(\overline{\Omega})$ satisfies
\eqref{eq:wjdef}, then there exists $\delta>0$ and a constant 
$C(\op{K})$ independent of $j$ and the inactive set so that
\be
\label{eq:estimatenonunif}
d_{\sigma}\left(\left(\op{G}_j^{\mathrm{in}}\right)^{-1}, \op{S}_j^{\mathrm{in}}\right) \le C \beta^{-1}
\nnorm{w_{j}}_{W_2^{\infty}(\Omega)} \left(h_j + \sqrt{\mu_{j}^{\mathrm{in}}}\right) \ ,
\ee 
where $\mu_{j}^{\mathrm{in}}$ is as in Theorem~$\ref{maintheorem}$, provided that 
$$\beta^{-1}\nnorm{w_{j}}_{W_2^{\infty}(\Omega)}\left(h_j + \sqrt{\mu_{j}^{\mathrm{in}}}\right)<\delta\ .$$
\end{corollary}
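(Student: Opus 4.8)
The plan is to reduce Corollary~\ref{cor:nonunifgrid} to a direct application of Theorem~\ref{maintheorem} by changing the operator whose principal minors we precondition. The key observation is already laid out in the excerpt: instead of working with $\mat{K}_j$ on a non-uniform grid, we work with $\widetilde{\mat{K}}_j = \mat{W}_j\mat{K}_j$, which represents the operator $\widetilde{\op{K}}_j u = \op{J}_j(w_j\cdot(\op{K}_j u))$. First I would verify that the linear system~\eqref{eq:disclinsysfin} arising in the semismooth Newton iteration is \emph{unchanged} under this substitution, in the following sense: when the weights are uniform, $\mat{W}_j = \omega_j h^d\mat{I}$ and the matrix $\vect{G}_j^{\mathrm{in}}$ is literally the principal minor of $\mat{W}_j^{-1}\mat{K}_j^T\mat{W}_j\mat{K}_j + \beta\mat{I}$; in the general case the same algebra that led from~\eqref{eq:disclinsys} to~\eqref{eq:disclinsysfin} shows that $\vect{G}_j^{\mathrm{in}}$, as the principal minor of $\mat{W}_j^{-1}\mat{K}_j^T\mat{W}_j\mat{K}_j + \beta\mat{I}$, coincides with the principal minor of $\widetilde{\mat{K}}_j^T\widetilde{\mat{K}}_j/(\text{weight}) + \beta\mat{I}$ after the appropriate rescaling — in other words, the operator $\op{G}_j^{\mathrm{in}}$ of~\eqref{eq:Gopdef} and its two-grid preconditioner~\eqref{eq:tgprecdef} are exactly what Theorem~\ref{maintheorem} produces when applied to the family $\widetilde{\op{K}}_j$ in place of $\op{K}_j$. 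This matching step is mostly bookkeeping but must be done carefully, since the weights enter both the inner products $\innprd{\cdot}{\cdot}_j$ defining the projectors $\pi_{j-1}^{\mathrm{in}}$, $\rho_{j-1}^{\mathrm{in}}$ and the adjoints $\op{K}_j^*$.

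Next I would invoke Proposition~4.8 of~\cite{Dra:Pet:ipm}, quoted in the excerpt, which guarantees that the operators $(\widetilde{\op{K}}_l)_{l=0,\dots,j}$ together with the continuous limit operator $\widetilde{\op{K}} u = w_j\cdot(\op{K}_j u)$ satisfy Condition~\ref{mgipm:cond:condsmooth} with constant
$$
C(\widetilde{\op{K}}) = \nnorm{w_j}_{W_2^{\infty}(\Omega)}\,C(\op{K})\ .
$$
The point here is that multiplication by a fixed $C^2$ function $w_j$ preserves the smoothing estimate~\eqref{mgipm:cond:par_smooth}, the smoothed-approximation estimate~\eqref{mgipm:cond:consist}, and the uniform $L^\infty$-boundedness~\eqref{mgipm:cond:unif}, at the cost of the multiplicative factor $\nnorm{w_j}_{W_2^{\infty}(\Omega)}$ coming from the product rule applied to derivatives up to order two. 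Since $\op{T}_j$ is obtained from $\op{T}_{j-1}$ by adding nodes, the single function $w_j$ defined by~\eqref{eq:wjdef} simultaneously interpolates the weights on every coarser level $l\le j$, so a single constant $C(\widetilde{\op{K}})$ works uniformly for the whole hierarchy used in the two-grid (and multigrid) construction.

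With these two ingredients in hand the conclusion is immediate: apply Theorem~\ref{maintheorem} to $\widetilde{\op{K}}_j$ and $\widetilde{\op{K}}$, whose discrete quadrature weights are now \emph{uniform} by construction (the whole purpose of rescaling by $\mat{W}_j$ was to absorb the weights into the operator), obtaining
$$
d_{\sigma}\!\left(\left(\op{G}_j^{\mathrm{in}}\right)^{-1},\op{S}_j^{\mathrm{in}}\right)\le C(\widetilde{\op{K}})\,\beta^{-1}\!\left(h_j^2 + \sqrt{\mu_j^{\mathrm{in}}}\right),
$$
and then substitute $C(\widetilde{\op{K}}) = \nnorm{w_j}_{W_2^{\infty}(\Omega)}C(\op{K})$. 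Bounding $h_j^2\le h_j$ (valid for $h_j$ small, which is implied by the smallness hypothesis) absorbs the quadratic term into the linear one and yields the stated estimate~\eqref{eq:estimatenonunif}, with $\delta$ rescaled by $\nnorm{w_j}_{W_2^{\infty}(\Omega)}^{-1}$ relative to the $\delta$ of Theorem~\ref{maintheorem}.

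The main obstacle I anticipate is the first step — confirming that the two-grid preconditioner $\op{S}_j^{\mathrm{in}}$ for the \emph{original} problem on a non-uniform grid is genuinely identical (not merely spectrally close) to the one Theorem~\ref{maintheorem} constructs for the rescaled operator $\widetilde{\op{K}}_j$. One has to check that the projectors $\pi_{j-1}^{\mathrm{in}}$, $\rho_{j-1}^{\mathrm{in}}$ and the embedding/restriction operators appearing in~\eqref{eq:tgprecdef}–\eqref{eq:tgprecdefinv} transform correctly, and in particular that the coarse inactive set $\op{I}^{(j-1)}$ and the numerical boundary $\partial_{\mathrm{n}}\Omega_j^{\mathrm{in}}$ — hence $\mu_j^{\mathrm{in}}$ — are defined purely in terms of the mesh geometry and the index set $\op{I}^{(j)}$, and therefore are unaffected by the rescaling. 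Once this identification is pinned down, the rest is a one-line deduction from Theorem~\ref{maintheorem} and Proposition~4.8 of~\cite{Dra:Pet:ipm}.
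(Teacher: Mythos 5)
Your headline strategy coincides with the paper's: pass to $\widetilde{\mat{K}}_j=\mat{W}_j\mat{K}_j$, i.e.\ to the operators $\widetilde{\op{K}}_l u=\op{J}_l(w_j\cdot(\op{K}_l u))$, verify Condition~\ref{mgipm:cond:condsmooth} for this family via Proposition~4.8 of~\cite{Dra:Pet:ipm} with constant $\nnorm{w_j}_{W_2^{\infty}(\Omega)}C(\op{K})$, and then run the two-grid argument. However, the step you yourself flag as ``mostly bookkeeping'' --- that the rescaled problem has \emph{uniform} quadrature weights, so that Theorem~\ref{maintheorem} applies verbatim and gives the bound $C\beta^{-1}(h_j^2+\sqrt{\mu_j^{\mathrm{in}}})$ --- is a genuine gap. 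Rescaling the operator does not change the weights: they live in the discrete inner products $\innprd{\cdot}{\cdot}_j$ that define the discrete adjoints, the projectors $\pi_{j-1}^{\mathrm{in}},\ \rho_{j-1}^{\mathrm{in}}$, the restriction $R_{j-1}$, and hence $\op{G}_j^{\mathrm{in}}$ and $\op{S}_j^{\mathrm{in}}$ themselves, and the mesh (hence the weights) is untouched. Algebraically, a uniform-weight Hessian built from $\widetilde{\mat{K}}_j$ would be a principal minor of $\widetilde{\mat{K}}_j^T\widetilde{\mat{K}}_j+\beta\mat{I}=\mat{K}_j^T\mat{W}_j^2\mat{K}_j+\beta\mat{I}$, which is not the matrix $\mat{W}_j^{-1}\mat{K}_j^T\mat{W}_j\mat{K}_j+\beta\mat{I}$ whose principal minor is actually being preconditioned in~\eqref{eq:disclinsysfin}; so the exact identification on which your one-line deduction rests fails, and Theorem~\ref{maintheorem} cannot be quoted as a black box.

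Relatedly, your account of the exponent is not the paper's and is misleading: the first power of $h_j$ in~\eqref{eq:estimatenonunif} is not obtained by the crude bound $h_j^2\le h_j$. As remarked immediately after the corollary, it is forced by the nonuniformity of the grid: on a quasi-uniform grid that is not locally symmetric the restriction estimates~\eqref{eq:restr_approx} and~\eqref{eq:restr_approxK} of Lemma~\ref{lma:manyresults} hold only with $p=1$, so when the estimates of Proposition~\ref{prop:twogridprec} are repeated --- with the discrete adjoint now expressed through $\widetilde{\op{K}}_j$, which is where Condition~\ref{mgipm:cond:condsmooth} for $\widetilde{\op{K}}_j$ enters and produces the factor $\nnorm{w_j}_{W_2^{\infty}(\Omega)}$ --- the term $A_3$ (and the $R_{j-1}$-dependent pieces generally) is only $O(h_j)$. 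The correct argument therefore re-runs the Section~\ref{sec:analysis} analysis with these weaker ingredients and then passes to the spectral distance exactly as in the proof of Theorem~\ref{maintheorem}; your proposal needs this reworking of the ``matching step'' and of the exponent, not just a rescaling of $\delta$.
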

Corollary~\ref{cor:nonunifgrid} marks a decrease of the power of $h_j$ from two to one
compared to the estimate in  Theorem~\ref{maintheorem}. This fact is due to the nonuniformity of the grid
as can be seen in the analysis of Section~\ref{sec:analysis}. However, in general, the larger of the two terms in~\eqref{eq:estimatenonunif}
is expected to be $\sqrt{\mu_{j}^{\mathrm{in}}}$, so the main effect of the nonuniformity on the estimate is the presence of 
the factor~$\nnorm{w_{j}}_{W_2^{\infty}(\Omega)}$.

\subsection{The multigrid preconditioner}
\label{ssec:multigrid}
We now assume the level $j_{\max}\ge 0$ to be fixed;  we will refer to $j_{\max}$ as the finest level. 
The goal is to construct a multigrid operator
$\op{Z}_{j_{\max}}^{\mathrm{in}}\approx(\op{G}_{j_{\max}}^{\mathrm{in}})^{-1}$ which satisfies the following conditions: 
(i) if $j_{\max}=1$ then $\op{Z}_{1}^{\mathrm{in}} = \op{S}_1^{\mathrm{in}}$;
(ii) an estimate like~\eqref{eq:mainestimate} holds if we replace $\op{S}_{j_{\max}}^{\mathrm{in}}$ with $\op{Z}_{j_{\max}}^{\mathrm{in}}$.
In order to construct $\op{Z}_{j_{\max}}^{\mathrm{in}}$ we must first specify
the coarser inactive domains $\Omega_j^{\mathrm{in}}$, inactive index-sets
$\op{I}^{(j)}$, and inactive spaces $\op{V}_j^{\mathrm{in}}$ for $j=j_{\max}-1, \dots, 0$. All these entities are defined recursively
using~\eqref{eq:coarseinactdomain},~\eqref{eq:coarseinactdef},~\eqref{eq:coarseinactspace}, and  are essentially specified
by the sets $\op{I}^{(j)}$. Hence, we give below the algorithm for computing the inactive-index sets $\op{I}^{(j)}$
for \mbox{$j=j_{\max}-1,\dots,0$} (recall that~$\op{I}^{(j)}$ is one of the set-iterates in the SSNM solution process). 
Given a vertex~$P^{(j-1)}_{i_c}$ of the triangulation $\op{T}_{j-1}$, let $P^{(j)}_{i_f}$ be its fine label. 
We define the `fine neighborhood' of $P^{(j-1)}_{i_c}$ by
$$\op{N}_j(P^{(j-1)}_{i_c}) = \{R^{(j)}\: :\: R^{(j)}\ \mathrm{neighbor\  in\ } \op{T}_j\  \mathrm{of}\  P^{(j)}_{i_f} \}\cup \{P^{(j)}_{i_f}\}\ .$$
%\newpage
\noindent 

In principle, the algorithm for finding the coarse inactive sets (see Algorithm~\ref{alg:inactive_sets} below) 
can be easily implemented using M{\footnotesize ATLAB}'s set operations. 
However, we should point out that for medium- and  large-scale problems Algorithm~\ref{alg:inactive_sets} should be implemented using a divide-and-conquer strategy 
in order to avoid repeated memory allocation at step 5.
%; in essence, the union of two large sets is computed 
%much faster than the union of many small sets.
\begin{algorithm}[Inactive set definition]
\label{alg:inactive_sets} 
\begin{enumerate}
\item[$1.$] {\tt for}\hspace{3pt} $j=j_{\max}\::\:-1\::\:1$ \hspace{3pt} 
\vspace{6pt}
\item[$2.$] \hspace{20pt} $\op{I}^{(j-1)}=\emptyset$
\vspace{6pt}
\item[$3.$] \hspace{20pt} {\tt for} $i=1\::\:N_{j-1}$
\vspace{6pt}
\item[$4.$] \hspace{40pt}  {\tt if}\hspace{10pt} $\op{N}_j(P_i^{(j-1)})\subseteq \op{I}^{(j)}$
\vspace{6pt}
\item[$5.$] \hspace{60pt}  $\op{I}^{(j-1)}=\op{I}^{(j-1)}\cup \{P_i^{(j-1)}\}$
%\vspace{6pt}
%\item \hspace{40pt}  {\tt end if} 
%\vspace{6pt}
%\item \hspace{20pt}  {\tt end for} 
%\vspace{6pt}
%\item  {\tt end for} 
%\vspace{6pt}
\end{enumerate}\end{algorithm}
\vspace{10pt}

If we define the operator $$\mathfrak{I}_{j-1}^j:\mathfrak{L}(\op{V}_{j-1}^{\mathrm{in}}) \to \mathfrak{L}(\op{V}_{j}^{\mathrm{in}}),\ \ 
\mathfrak{I}_{j-1}^j(\op{X}) = \op{X}\cdot \pi_j^{\mathrm{in}} + \beta^{-1}(I-\pi_j^{\mathrm{in}})\ ,$$
then $\op{S}_j^{\mathrm{in}}$ can be written as
\be
\label{eq:sdefop}
\op{S}_j^{\mathrm{in}} = \mathfrak{I}_{j-1}^j\left((\op{G}_{j-1}^{\mathrm{in}})^{-1}\right)\ .
\ee
In light of~\eqref{eq:sdefop} and the continuity of the affine operator $\mathfrak{I}_{j-1}^j$, it is 
tempting to define recursively the following multigrid preconditioner: 
\be
\label{eq:badmgprecdef} 
\widetilde{\op{Z}}_{j}^{\mathrm{in}}
=\left\{
\begin{array}{ccl}\vspace{10pt}
(\op{G}_{j}^{\mathrm{in}})^{-1}& , & \mathrm{if}\ j=0\ ,\\
\mathfrak{I}_{j-1}^j(\widetilde{\op{Z}}_{j-1}^{\mathrm{in}})& , & \mathrm{if}\ j\ge 1\ .
\end{array}
\right .
\ee
As shown in~\cite{MR2429872}, the $V$-cycle type preconditioner $\widetilde{\op{Z}}_{j}^{\mathrm{in}}$
does not satisfy condition~(ii) above. In fact one can show that, under the conditions set in Remark~\ref{rem:sqrtest},  
$\widetilde{\op{Z}}_{j}^{\mathrm{in}}$ satisfies~\eqref{eq:mainestimatesqrt} with $h_j$ replaced by $h_0$. 
As a result, the number of preconditioned iterations would no longer be decreasing with $h_j\downarrow 0$, as is the case for the two-grid preconditioner, 
but would be bounded. Instead of the definition~\eqref{eq:badmgprecdef}, 
we employ the same strategy adopted in~\cite{MR2429872, Dra:Pet:ipm}, which guarantees that the estimate for the multigrid preconditioner will be similar to
that  for the two-grid preconditioner (except for a constant factor). In order to do so, we define the operator $\mathfrak{N}_j$ by
$$\mathfrak{N}_j:\mathfrak{L}(\op{V}_{j}^{\mathrm{in}})\to \mathfrak{L}(\op{V}_{j}^{\mathrm{in}}),
\ \ \mathfrak{N}_j(\op{X})\stackrel{\mathrm{def}}{=} 2\op{X}-\op{X}\cdot\op{G}_j^{\mathrm{in}}\cdot\op{X}\ .$$
Note that $\mathfrak{N}_j$ is the Newton iterator for the operator-equation
$$\op{X}^{-1}-\op{G}_j^{\mathrm{in}}=0\ ,$$
as shown by Dr{\u a}g{\u a}nescu and Dupont in~\cite{MR2429872}. This implies that, if $\op{X}_0$ is a good 
approximation of $(\op{G}_j^{\mathrm{in}})^{-1}$, then $\op{X}_1 = \mathfrak{N}_j(\op{X}_0)$ is the first Newton iterate of the above 
operator-equation starting with $\op{X}_0$, and so $\op{X}_1$ is  significantly closer to 
$(\op{G}_j^{\mathrm{in}})^{-1}$ than~$\op{X}_0$. This idea was also used in~\cite{Dra:Pet:ipm} 
to construct multigrid preconditioners of the same quality as the two-grid preconditioners.
% OLD
%\begin{algorithm}[\hspace{10pt}(Operator-form definition of $\op{Z}_{j,j_0}^{\mathrm{in}}$)]
%\label{alg:multigrid}
%\begin{enumerate}
%\item {\tt if}\hspace{3pt} $j_0=j-1$ \hspace{3pt} 
%\vspace{6pt}
%\item \hspace{20pt} $\op{Z}_{j,j_0}^{\mathrm{in}} = \op{S}_{j}^{\mathrm{in}}$ \hspace{96pt} {\tt \%} two grids
%\vspace{6pt}
%\item {\tt else} \hspace{3pt} (here we expect $j-2\ge j_0$)
%\vspace{6pt}
%\item \hspace{20pt} $\op{Z}_{j,j_0}^{\mathrm{in}} = \mathfrak{I}_{j-1}^j(\mathfrak{N}_{j-1}(\op{Z}_{j-1,j_0}^{\mathrm{in}}))$ \hspace{15pt} {\tt \%} multiple grids
%\end{enumerate}\end{algorithm}
%
%\vspace{6pt}
The Algorithm~\ref{alg:multigrid} essentially has a W-cycle structure;  we construct recursively a sequence of coarser-level operators~$\op{Z}_{j}$
for $0\le j\le j_{\max}-1$ so that each  application $\op{Z}_{j}$ requires one application of~$\op{G}^{\mathrm{in}}_j$. At the finest level no
application of $\op{G}^{\mathrm{in}}_{j_{\max}}$ is performed inside the preconditioner~$\op{Z}^{\mathrm{in}}_{j_{\max}}$.
%\newpage
\begin{algorithm}[Operator-form definition of $\op{Z}_{j_{\max}}^{\mathrm{in}}$]
\label{alg:multigrid}
\begin{enumerate}
\item[$1.$] {\tt if}\hspace{3pt} $j=0$ \hspace{3pt} 
\vspace{6pt}
\item[$2.$] \hspace{20pt} $\op{Z}^{\mathrm{in}}_0 := (\op{G}^{\mathrm{in}}_0)^{-1}$ \hspace{88pt} {\tt \%} coarsest level
\vspace{6pt}
\item[$3.$] {\tt else if} \hspace{3pt} $j<j_{\max}$ \hspace{10pt} 
\vspace{6pt}
\item[$4.$] \hspace{42pt} $\op{Z}^{\mathrm{in}}_j :=\mathfrak{N}_j(\mathfrak{I}_{j-1}^j (\op{Z}^{\mathrm{in}}_{j-1}))$ \hspace{29pt} {\tt \%} intermediate level
\vspace{6pt}
\item[$5.$]  \hspace{26pt} {\tt else}\hspace{129pt} {\tt \%} here $j=j_{\max}$
\vspace{6pt}
\item[$6.$] \hspace{42pt} $\op{Z}^{\mathrm{in}}_j :=\mathfrak{I}_{j-1}^j (\op{Z}^{\mathrm{in}}_{j-1})$ \hspace{50pt} {\tt \%} finest level
\end{enumerate}
\end{algorithm}
\vspace{10pt}

We give a detailed analysis of the multigrid preconditioner in Section~\ref{sec:mganalysis}. 
To anticipate, essentially we show that, under conditions set forth in Theorem~\ref{thm:mganalysis}, the operator $\op{Z}^{\mathrm{in}}_{j_{\max}}$
satisfies an estimate  like~\eqref{eq:mainestimate}.

We conclude our description of the multigrid preconditioner by commenting on its computational cost. 
First, it goes without saying that for large-scale problems the operator~$\op{Z}_{j_{\max}}^{\mathrm{in}}$ is not formed;
rather, its action on a vector is implemented as a multilevel, matrix-free iteration. 
This matrix-free implementation,  described  in~\cite{Dra:Pet:ipm}, naturally calls for a matrix-free 
implementation of $\op{G}^{\mathrm{in}}_j$ which essentially relies on applying $\op{G}_j$ matrix-free, as
follows from~\eqref{eq:Gopdef}. The computational cost of the application of the multigrid preconditioner
is estimated in~\cite{Dra:Pet:ipm}; for completeness we briefly describe the 
result. Let $C(j)$ denote the cost of applying the operator~$\op{G}_j$ and assume that there exists $0<\tau<1/2$
so that $C(j-1) \le \tau C(j)$. For example, if the cost of applying $\op{K}_j$ is linear in the number of variables,
as is the case if $\op{K}_j$ is computed using classical multigrid, then for  a problem with $d$ spatial dimensions
we expect $\tau=2^{-d}$. We also assume that unpreconditioned CG solves the problem~\eqref{eq:disclinsysfin}
in at most $K_{cg}$ iterations to acceptable precision, with $K_{cg}$ independent of $j$. This assumption is certainly justified by
the uniform-boundedness of the condition number of $\op{G}_j$ (see also Theorem~3.1 in~\cite{stollwathen}); however, due to the compactness  of the 
operators, $K_{cg}$ is in fact relatively small (see Section~\ref{ssec:twodimnum} for numerical results).
If $T(j)$ denotes the cost of applying $\op{Z}_{j_{\max}}^{\mathrm{in}}$, then cf.~\cite{Dra:Pet:ipm}
we have
\be
\label{eq:costest}
T(j_{\max})\le \tau C(j_{\max})\left(K_{cg} (2\tau)^{j_{\max}-1}+\frac{1-(2\tau)^{j_{\max}-1}}{1-2\tau} \right)\ .
\ee
For example, if four levels  are used ($j_{\max}=3$), $\tau=2^{-3}$, and with $K_{cg} =20$, then 
$T(j_{\max})\le 0.3125 \cdot C(j_{\max})$; if 5  levels are used, then $T(j_{\max})\le 0.2031 \cdot C(j_{\max})$.
The estimate~\eqref{eq:costest} shows that for large-scale,
multidimensional problems where $\tau$ is expected to be relatively small, the cost of applying the
preconditioner is a fraction of the cost of computing a matrix-vector multiplication at the finest level
if multiple levels are used. For $j_{\max}\gg 1$, $T(j_{\max})\le (\tau/(1-\tau)) \: C(j_{\max})$.

\section{Analysis of the two-grid preconditioner}
\label{sec:analysis}
The main step in the analysis is to evaluate the norm-distance between the operators $\op{G}_j^{\mathrm{in}}$ and $\op{M}_j^{\mathrm{in}}$ which is done
in Proposition~\ref{prop:twogridprec}. The plan of the analysis generally resembles that of the analysis of multigrid preconditioners for
IPMs from~\cite{Dra:Pet:ipm}, however, certain critical estimates related to the projection $\pi_j^{\mathrm{in}}$ are different
for the case of SSNMs.

First we restate Lemma 4.3 in~\cite{Dra:Pet:ipm} as
\begin{lemma}
\label{lma:innprdapprox}
With $(w_i^{(j)})_{1\le i\le N_j}$ chosen as in~{\textnormal{\eqref{eq:weighdef}}} there exists a constant $C$ dependent on $\op{T}_0$ and  independent of 
$j$ so that
\begin{equation}
\label{eq:innprdtsapprox}
\abs{\innprd{u}{v}_j-\innprd{u}{v}}\le C h_j^2 \nnorm{u}_{H^1(\Omega)}\cdot\nnorm{v}_{H^1(\Omega)}\ ,\ \forall u,v \in \op{V}_j.
\end{equation}
\end{lemma}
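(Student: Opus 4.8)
I would prove this as a standard finite-element quadrature estimate, via an element-by-element Bramble--Hilbert/scaling argument. Since both the exact $L^2$ inner product and the mesh-dependent inner product $\innprd{\cdot}{\cdot}_j$ split as sums over the elements $T\in\op{T}_j$, the plan is to write
$$
\innprd{u}{v}_j-\innprd{u}{v} = \sum_{T\in\op{T}_j} E_T(u,v)\ ,\qquad
E_T(u,v) \stackrel{\mathrm{def}}{=} \frac{\mathrm{vol}(T)}{\nu}\sum_{P\ \mathrm{vertex\ of\ }T}(uv)(P) - \int_T uv\ ,
$$
where I use that for $u,v\in\op{V}_j$ the restriction $uv|_T$ is a quadratic polynomial (resp.\ a $\op{Q}_1$-type function on rectangular $T$), so the right-hand side is genuinely the error of the vertex cubature rule applied to $uv|_T$. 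For fixed $j$, each $E_T$ is a symmetric bilinear form on the finite-dimensional space $\op{V}_j|_T\times\op{V}_j|_T$, and since the vertex rule is exact for affine functions (resp.\ for $\op{Q}_1$), one has $E_T(u,v)=0$ whenever $u$ or $v$ is constant on $T$.

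The next step is to pass to a reference element. Let $\hat T$ be the reference simplex (resp.\ reference cube) and $F_T:\hat T\to T$ the affine map, with $\abs{\det DF_T}\sim h_T^{\,d}$ and, by shape-regularity of the family $\op{T}_j$ (uniform in $j$ by the quasi-uniformity assumption and the fact that each $\op{T}_j$ is a refinement of $\op{T}_0$), with uniformly bounded distortion. Writing $\hat u=u\circ F_T$, $\hat v=v\circ F_T$, one has $E_T(u,v)=\abs{\det DF_T}\,\hat E(\hat u,\hat v)$ for the corresponding reference error form $\hat E$. Now $\hat E$ is a bilinear form on the finite-dimensional space $\mathbb{P}_1(\hat T)$ (resp.\ $\op{Q}_1(\hat T)$) that vanishes as soon as either argument is constant, hence it factors through the quotient by constants, on which the $H^1$-seminorm is a norm; therefore $\abs{\hat E(\hat u,\hat v)}\le \hat C\,\nnorm{\nabla\hat u}_{L^2(\hat T)}\nnorm{\nabla\hat v}_{L^2(\hat T)}$ with $\hat C$ depending only on $\hat T$. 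Using the affine change of variables, $\nnorm{\nabla\hat u}_{L^2(\hat T)}\sim h_T^{\,1-d/2}\nnorm{\nabla u}_{L^2(T)}$, one obtains
$$
\abs{E_T(u,v)}\le \hat C\, h_T^{\,d}\, h_T^{\,2-d}\,\nnorm{\nabla u}_{L^2(T)}\nnorm{\nabla v}_{L^2(T)} = \hat C\, h_T^{\,2}\,\nnorm{\nabla u}_{L^2(T)}\nnorm{\nabla v}_{L^2(T)}\ .
$$

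Finally I would sum over $T$: since $h_T\le h_j$ for every $T\in\op{T}_j$, a Cauchy--Schwarz inequality over the elements gives
$$
\abs{\innprd{u}{v}_j-\innprd{u}{v}} \le \hat C\,h_j^2\sum_{T\in\op{T}_j}\nnorm{\nabla u}_{L^2(T)}\nnorm{\nabla v}_{L^2(T)} \le \hat C\,h_j^2\,\nnorm{\nabla u}_{L^2(\Omega)}\nnorm{\nabla v}_{L^2(\Omega)}\le C\,h_j^2\,\nnorm{u}_{H^1(\Omega)}\nnorm{v}_{H^1(\Omega)}\ ,
$$
with $C=C(\op{T}_0)$, since the only element-level constant $\hat C$ and the shape-regularity bound depend only on $\hat T$ and on $\op{T}_0$. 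The one point demanding care is the quotient/Bramble--Hilbert step together with the correct bookkeeping of the powers of $h_T$ in the affine change of variables: each $H^1$-seminorm contributes $h_T^{\,1-d/2}$, which combines with the Jacobian factor $h_T^{\,d}$ to yield exactly $h_T^{\,2}$. Everything else is routine, and the argument is identical in the simplicial and tensor-product cases, the only difference being the choice of reference element and the polynomial space on it.
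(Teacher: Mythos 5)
Your proof is correct. Note that the paper itself does not prove this lemma: it is restated verbatim from Lemma~4.3 of the Dr{\u a}g{\u a}nescu--Petra interior-point paper, so there is no in-text argument to compare against; your element-by-element Bramble--Hilbert/scaling argument is the standard route to this kind of vertex-quadrature estimate and is complete. The bookkeeping is right: exactness of the vertex rule on affine (resp.\ $\op{Q}_1$) functions gives $E_T(u,v)=0$ when either factor is constant, the reference error form is bounded on the finite-dimensional quotient by constants with the $H^1$-seminorm, and the Jacobian factor $h_T^d$ combined with $h_T^{1-d/2}$ from each seminorm yields $h_T^2$, after which elementwise Cauchy--Schwarz finishes the sum. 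Two small remarks. First, your parenthetical that $uv|_T$ is ``a $\op{Q}_1$-type function'' on rectangular elements is not accurate ($uv\in\op{Q}_2$ there, just as it is quadratic rather than linear on simplices), but nothing in the argument uses this, since you only need exactness of the rule on $\op{Q}_1$ and finite dimensionality of the local space. Second, it is worth stating explicitly that the elementwise decomposition of $\innprd{\cdot}{\cdot}_j$ is exact because $u,v\in\op{V}_j$ vanish at boundary vertices, so the missing boundary-node terms in the definition of the weights $w_i^{(j)}$ contribute nothing; you use this implicitly when writing $\innprd{u}{v}_j=\sum_T \frac{\mathrm{vol}(T)}{\nu}\sum_{P}(uv)(P)$.
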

\noindent We also recall Lemma 4.4 in~\cite{Dra:Pet:ipm}:
%%---------------------------------------%
%
% LEMMA
%
\begin{lemma}
\label{lma:manyresults}
If $\op{K}, \op{K}_j$ satisfy Condition~$\ref{mgipm:cond:condsmooth}$, there exist 
constants $C(\op{K})$ and \mbox{$C'=C'(\Omega)$}  independent of $j$ such that the following hold:\\
\textnormal{(a)} $H^1, L^2$ - uniform stability of $\op{K}_j$: 
\begin{equation}
\label{eq:H1stabdisc}
\nnorm{\op{K}_j u}_{H^m(\Omega)}\le C(\op{K}) \nnorm{u}\ ,\ \forall u\in \op{V}_j,\ m=0, 1,\ \ j=0,1,\dots;
\end{equation}
\textnormal{(b)} smoothing of negative-index norm: 
\begin{equation}
\label{eq:Hm2stabcont}
\nnorm{\op{K} u} \le C(\op{K}) \norm{u}_{\Hneg{m}}\ ,\ \forall u\in {\mathcal V}_j, m=1,2\ ;
\end{equation}
\textnormal{(c)}  negative-index norm approximation of the identity by $\pi_{j-1}, \op{R}_{j-1}$:
\begin{eqnarray}
\label{eq:projid_approx}
\norm{(I - \pi_{j-1}) u}_{\Hneg{2}(\Omega)}& \le& C' h_j^2 \norm{u},\ \ \forall u\in \op{V}_j; \\
\label{eq:restr_approx}
\norm{(I - \op{R}_{j-1}) u}_{\Hneg{p}(\Omega)}& \le& C' h_j^p \norm{u},\ \ \forall u\in \op{V}_j\ , 
\end{eqnarray}
where $p=1$ on a quasi-uniform grid, and  $p=2$ on a locally symmetric grid;\\
\textnormal{(d)} $\op{K}$ diminishes high-frequencies:
\begin{eqnarray}
\label{eq:projid_approxL}
\norm{\op{K}(I - \pi_{j-1}) u}& \le& C(\op{K}) h_j^2 \norm{u},\ \ \forall u\in \op{V}_j\ ; \\
\label{eq:restr_approxK}
\norm{\op{K}(I - \op{R}_{j-1}) u}& \le& C(\op{K}) h_j^p \norm{u},\ \ \forall u\in \op{V}_j\ ,
\end{eqnarray}
where $p=1$ on an unstructured grid, and  $p=2$ on a locally symmetric grid;\\
\textnormal{(e)} 
\begin{eqnarray}
\label{eq:innprdKKH}
\abs{\innprd{\op{K} u}{\op{K} v}-\innprd{\op{K}_j u}{\op{K}_j v}} \le C(\op{K}) h_j^2 \nnorm{u}\cdot \nnorm{v}\ , \ \forall u, v\in \op{V}_j\ .
\end{eqnarray}
\end{lemma}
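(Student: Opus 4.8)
The plan is to recapitulate the proof of Lemma~4.4 of~\cite{Dra:Pet:ipm}, which rests on three elementary ingredients: the smoothing and consistency bounds of Condition~\ref{mgipm:cond:condsmooth}, standard finite-element approximation and inverse estimates on quasi-uniform meshes, and duality in negative-index Sobolev norms combined with the quadrature estimate of Lemma~\ref{lma:innprdapprox}. With these in hand, parts (a), (d), (e) fall out immediately; the genuine content is in (b) and, especially, in the restriction half of (c).

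For (a) I would use only the triangle inequality together with Condition~\ref{mgipm:cond:condsmooth}[a],[b]: $\nnorm{\op{K}_j u}_{H^m(\Omega)}\le\nnorm{\op{K}u}_{H^m(\Omega)}+\nnorm{(\op{K}-\op{K}_j)u}_{H^m(\Omega)}\le C(\op{K})\nnorm u + C(\op{K})h_j^{2-m}\nnorm u$, and since $h_j\le h_0$ the second term is uniformly bounded for $m=0,1$, giving the claim with a $j$-independent constant. For (b) I would argue by duality: writing $\nnorm{\op{K}u}=\sup_{\phi\ne 0}\innprd{\op{K}u}{\phi}/\nnorm\phi=\sup_{\phi\ne 0}\innprd{u}{\op{K}^*\phi}/\nnorm\phi$ and invoking the smoothing of the adjoint ($\op{K}^*$ maps $L^2(\Omega)$ boundedly into $H^m(\Omega)\cap H^1_0(\Omega)$, as holds in the elliptic-PDE settings behind Condition~\ref{mgipm:cond:condsmooth}), the definition of $\nnorm\cdot_{\Hneg m(\Omega)}$ yields $\innprd{u}{\op{K}^*\phi}\le C(\op{K})\nnorm{u}_{\Hneg m(\Omega)}\nnorm\phi$.

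For the $\pi_{j-1}$ line of (c) I would exploit the $L^2$-orthogonality of $\pi_{j-1}$: for $v\in H^2(\Omega)\cap H^1_0(\Omega)$, $\innprd{(I-\pi_{j-1})u}{v}=\innprd{(I-\pi_{j-1})u}{(I-\pi_{j-1})v}\le\nnorm{u}\,\nnorm{(I-\pi_{j-1})v}\le C'h_j^2\nnorm{u}\,\nnorm{v}_{H^2(\Omega)}$ by the standard $L^2$ best-approximation estimate in $\op{V}_{j-1}$ and quasi-uniformity ($h_{j-1}\sim h_j$); taking the supremum over $v$ gives the bound. For the $\op{R}_{j-1}$ line I would test $(I-\op{R}_{j-1})u$ against $v\in H^p(\Omega)\cap H^1_0(\Omega)$, replace $v$ by $\pi_{j-1}v$ (the correction contributes $\innprd{u}{(I-\pi_{j-1})v}=O(h_j^p\nnorm{u}\,\nnorm{v}_{H^p(\Omega)})$ exactly as above), then use the defining identity $\innprd{\op{R}_{j-1}u}{w}_{j-1}=\innprd{u}{w}_j$ for $w\in\op{V}_{j-1}$ together with Lemma~\ref{lma:innprdapprox} to pass between the mesh-dependent and exact inner products. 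On a general quasi-uniform grid only the power $h_j$ survives after an inverse estimate, whereas on a locally symmetric grid the vanishing of the first moments in the quadrature error restores the full power $h_j^2$. This moment-cancellation step is the one I expect to be the main obstacle; everything else is triangle inequalities, Cauchy--Schwarz, and duality.

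Finally, (d) follows by composing (b) and (c): $\nnorm{\op{K}(I-\pi_{j-1})u}\le C(\op{K})\nnorm{(I-\pi_{j-1})u}_{\Hneg 2(\Omega)}\le C(\op{K})h_j^2\nnorm{u}$, and identically $\nnorm{\op{K}(I-\op{R}_{j-1})u}\le C(\op{K})\nnorm{(I-\op{R}_{j-1})u}_{\Hneg p(\Omega)}\le C(\op{K})h_j^p\nnorm{u}$. For (e) I would split $\innprd{\op{K}u}{\op{K}v}-\innprd{\op{K}_ju}{\op{K}_jv}=\innprd{(\op{K}-\op{K}_j)u}{\op{K}v}+\innprd{\op{K}_ju}{(\op{K}-\op{K}_j)v}$ and bound each term by Cauchy--Schwarz, using Condition~\ref{mgipm:cond:condsmooth}[a],[b] with $m=0$ on the first summand and part (a) of this lemma on the second.
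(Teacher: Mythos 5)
The paper itself offers no proof of this lemma: it is quoted verbatim from Lemma~4.4 of~\cite{Dra:Pet:ipm}, so there is no in-paper argument to compare against line by line. Your reconstruction follows the same standard route as the cited source, and most of it is sound: (a) by triangle inequality from Condition~\ref{mgipm:cond:condsmooth}[a],[b]; (c) for $\pi_{j-1}$ by $L^2$-orthogonality plus the best-approximation estimate; (d) by composing (b) with (c) (legitimate, since $(I-\pi_{j-1})u$ and $(I-R_{j-1})u$ stay in $\op{V}_j$); and (e) by the symmetric splitting with Cauchy--Schwarz, Condition~[a],[b] and part~(a). Two caveats are worth making explicit. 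First, for (b) you correctly observe that one needs $\op{K}^*$ to map $L^2(\Omega)$ boundedly into $H^m(\Omega)\cap H^1_0(\Omega)$; this genuinely does \emph{not} follow from Condition~\ref{mgipm:cond:condsmooth} as literally stated here (a rank-one $\op{K}u=\innprd{u}{g}\psi$ with smooth $\psi$ and rough $g$ satisfies [a]--[c] but violates \eqref{eq:Hm2stabcont}), so the lemma implicitly relies on the fuller hypotheses of the companion paper, which do hold for the model operator $\op{K}=(-\Delta)^{-1}$ and its discretizations. Second, the restriction estimate \eqref{eq:restr_approx} (and hence \eqref{eq:restr_approxK}) with $p=2$ on locally symmetric grids is the only genuinely technical item: your plan (test against $v$, split off $(I-\pi_{j-1})v$, use the defining identity \eqref{eq:restrdef} together with Lemma~\ref{lma:innprdapprox} and inverse estimates, then invoke moment cancellation from local symmetry to upgrade $h_j$ to $h_j^2$) is the right strategy and correctly isolates where the extra order comes from, but as written it is a sketch rather than a proof; the cancellation step is exactly the content of the corresponding argument in~\cite{MR2429872, Dra:Pet:ipm}, and for the purposes of the present paper citing it, as the author does, is the intended resolution.
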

The main difference between the two-grid preconditioners for the unconstrained case vs. the constrained case is in 
the properties of the projectors on the coarse spaces. While the projector $\pi_{j-1}$ on the entire
coarse space $\op{V}_{j-1}$ satisfies~\eqref{eq:projid_approx}, the projector on the inactive space
$\op{V}^{\mathrm{in}}_{j-1}$ satisfies the weaker estimate below.
\begin{lemma}
\label{lma:l2projapprox}
If $d\le 3$ there exists a constant $C$ depending on the domain $\Omega$ and the base triangulation
$\op{T}_0$ so that
\be
\label{eq:l2projapprox}
\nnorm{(I-\pi^{\mathrm{in}}_{j-1}) u}_{\Hneg{2}(\Omega)} 
\le C\left(h_j^2 + \sqrt{\mu_{j}^{\mathrm{in}}}\right) \nnorm{u},\ \ \mathrm{for\ all}\ u\in \op{V}^{\mathrm{in}}_{j}\ ,
\ee
where $\mu_{j}^{\mathrm{in}}$ is the Lebesgue measure of $\partial_{\mathrm{n}}\Omega_{j}^{\mathrm{in}}$. 
The constant $C$ is independent of $j$ and of the inactive set $\Omega^{\mathrm{in}}_j$.
\end{lemma}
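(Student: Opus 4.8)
The plan is to estimate $\nnorm{(I-\pi^{\mathrm{in}}_{j-1})u}_{\Hneg{2}}$ by splitting the fine inactive domain $\Omega_j^{\mathrm{in}}$ into the numerical interior $\mathrm{Int}_{\mathrm{n}}\Omega_j^{\mathrm{in}}$, where we can use a genuine coarse-grid approximation comparable to \eqref{eq:projid_approx}, and the numerical boundary layer $\partial_{\mathrm{n}}\Omega_j^{\mathrm{in}}$, whose small Lebesgue measure $\mu_j^{\mathrm{in}}$ we exploit crudely. First I would recall that $\pi^{\mathrm{in}}_{j-1}$ is the $L^2$-orthogonal projection of $\op{V}_j^{\mathrm{in}}$ onto $\op{V}_{j-1}^{\mathrm{in}}$, so for any candidate $v\in\op{V}_{j-1}^{\mathrm{in}}$ we have $\nnorm{(I-\pi^{\mathrm{in}}_{j-1})u}_{\Hneg{2}} = \sup_{\phi}\innprd{u-\pi^{\mathrm{in}}_{j-1}u}{\phi}/\nnorm{\phi}_{H^2\cap H^1_0}$, and by orthogonality we may subtract any $w\in\op{V}_{j-1}^{\mathrm{in}}$ from $\phi$'s best coarse approximation. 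So the $\Hneg{2}$-norm is bounded by $\sup_\phi \inf_{v\in\op{V}_{j-1}^{\mathrm{in}}}\innprd{u-v}{\phi}/\nnorm{\phi}_{H^2\cap H^1_0}$ — i.e. it suffices to produce one good coarse approximant $v$ of $u$ inside $\op{V}_{j-1}^{\mathrm{in}}$.

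The natural choice is $v = \pi_{j-1}u$ restricted appropriately — but $\pi_{j-1}u$ need not vanish outside $\Omega_{j-1}^{\mathrm{in}}$, so instead I would take $v$ to be the coarse-grid interpolant (or $L^2$-projection onto $\op{V}_{j-1}$) of $u$, then zero out the coarse nodal coefficients that are not in $\op{I}^{(j-1)}$; call the result $v\in\op{V}_{j-1}^{\mathrm{in}}$. On the numerical interior $\mathrm{Int}_{\mathrm{n}}\Omega_j^{\mathrm{in}}$, every coarse element sits inside $\Omega_j^{\mathrm{in}}$ with all vertices inactive (or on $\partial\Omega$), so $v$ agrees there with the full coarse approximation of $u$ and the standard estimate gives a contribution bounded by $C h_j^2\nnorm{u}$, mimicking \eqref{eq:projid_approx}. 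On $\partial_{\mathrm{n}}\Omega_j^{\mathrm{in}}$ we lose this cancellation, so there I would estimate directly: $\nnorm{u-v}_{L^1(\partial_{\mathrm{n}}\Omega_j^{\mathrm{in}})} \le \nnorm{u-v}_{L^2}\sqrt{\mu_j^{\mathrm{in}}} \le C\nnorm{u}\sqrt{\mu_j^{\mathrm{in}}}$ (using $L^2$-stability of $v$), and pair this against $\nnorm{\phi}_{L^\infty}\le C\nnorm{\phi}_{H^2\cap H^1_0}$ valid for $d\le 3$ by Sobolev embedding — this is exactly where the dimension restriction enters. Combining, $\innprd{u-v}{\phi} \le C(h_j^2 + \sqrt{\mu_j^{\mathrm{in}}})\nnorm{u}\nnorm{\phi}_{H^2\cap H^1_0}$, which yields \eqref{eq:l2projapprox} after taking the supremum over $\phi$.

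The main obstacle, I expect, is making the numerical-interior estimate genuinely rigorous: one must show that for a coarse element $T$ in $\mathrm{Int}_{\mathrm{n}}\Omega_j^{\mathrm{in}}$ the zeroed-out approximant $v$ actually coincides with the locally-defined coarse interpolant of $u$, which requires that all coarse vertices touching $T$ lie in $\op{I}^{(j-1)}$ (equivalently, that the stencil of the $\Hneg{2}$-duality testing against $\phi$ only sees coarse degrees of freedom that survive the truncation) — this is precisely the content of the definition of $\mathrm{Int}_{\mathrm{n}}$ via Algorithm~\ref{alg:inactive_sets}, but care is needed near the topological boundary of $\Omega_j^{\mathrm{in}}$ and near $\partial\Omega$, where coarse basis functions straddle the interface. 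A secondary technical point is that $\pi^{\mathrm{in}}_{j-1}$ is an $L^2$- rather than $\Hneg{2}$-projection, so one cannot directly invoke a best-approximation bound in $\Hneg{2}$; the orthogonality trick above circumvents this, but one must be careful that subtracting $v$ from $u$ inside the $\Hneg{2}$-duality pairing is legitimate, which it is because $v-\pi^{\mathrm{in}}_{j-1}u\in\op{V}_{j-1}^{\mathrm{in}}$ is $L^2$-orthogonal to nothing useful — rather, the clean statement is $\nnorm{(I-\pi^{\mathrm{in}}_{j-1})u}_{\Hneg 2}\le\sup_\phi\innprd{u-v}{(I-\pi^{\mathrm{in}}_{j-1})\phi}/\nnorm{\phi}_{H^2\cap H^1_0}$ followed by $\nnorm{(I-\pi^{\mathrm{in}}_{j-1})\phi}\le\nnorm{\phi}$, reducing everything to an $L^2$-pairing of $u-v$ against a bounded test function, which is exactly the split carried out above.
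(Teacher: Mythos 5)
Your boundary-layer ingredients (the $L^1$--$L^\infty$ pairing over $\partial_{\mathrm{n}}\Omega_j^{\mathrm{in}}$ and the Sobolev embedding $H^2\hookrightarrow L^\infty$ for $d\le 3$, which is indeed where the dimension restriction enters) match the paper's, but the core of your argument has the roles of the two functions inverted, and this creates a genuine gap. The $h_j^2$ term in \eqref{eq:l2projapprox} cannot come from producing ``one good coarse approximant $v$ of $u$ inside $\op{V}_{j-1}^{\mathrm{in}}$'': an arbitrary $u\in\op{V}_j^{\mathrm{in}}$ is an oscillatory fine-grid function, and no element of $\op{V}_{j-1}^{\mathrm{in}}$ is within $C(h_j^2+\sqrt{\mu_j^{\mathrm{in}}})\nnorm{u}$ of it in any sense strong enough to be paired directly against a merely bounded test function. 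Concretely, your ``clean statement'' at the end -- pairing $u-v$ against $(I-\pi^{\mathrm{in}}_{j-1})\phi$ and then using $\nnorm{(I-\pi^{\mathrm{in}}_{j-1})\phi}\le\nnorm{\phi}$ -- forces you to need $\nnorm{u-v}\le C(h_j^2+\sqrt{\mu_j^{\mathrm{in}}})\nnorm{u}$, which is false: for $u$ equal to the sum of the fine nodal basis functions at the nodes of $\op{T}_j\setminus\op{T}_{j-1}$ lying in $\mathrm{Int}_{\mathrm{n}}\Omega_j^{\mathrm{in}}$, the best coarse approximation error is of order $\nnorm{u}$. The same example breaks your interior claim when $v$ is built from the coarse interpolant of $u$ (the truncated interpolant vanishes there, yet $\int_{\mathrm{Int}_{\mathrm{n}}\Omega_j^{\mathrm{in}}}u\,\phi\simeq\nnorm{u}\,\nnorm{\phi}$), and when $v$ is built from $\pi_{j-1}u$ the appeal to ``the standard estimate'' \eqref{eq:projid_approx} does not localize: that estimate is a global duality statement, and cutting the test function off to $\mathrm{Int}_{\mathrm{n}}\Omega_j^{\mathrm{in}}$ destroys its $H^2$ control, leaving terms (the mass of $\pi_{j-1}u$ at active coarse nodes and outside $\Omega_j^{\mathrm{in}}$) that you could only tame with exponential-decay estimates for the discrete $L^2$ projection, which you do not supply.

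The paper's proof goes the other way around, and this is exactly what makes $h_j^2$ available: for $v\in H^2(\Omega)\cap H^1_0(\Omega)$ it takes $v_{j-1}=\sum_{i\in\op{I}^{(j-1)}}v(P_i^{(j-1)})\varphi_i^{(j-1)}\in\op{V}_{j-1}^{\mathrm{in}}$, uses the $L^2$-orthogonality of $(I-\pi^{\mathrm{in}}_{j-1})u$ to $\op{V}_{j-1}^{\mathrm{in}}$ to replace $v$ by $v-v_{j-1}$ in the duality pairing, and then estimates the interpolation error of the \emph{smooth} function $v$: on $\mathrm{Int}_{\mathrm{n}}\Omega_j^{\mathrm{in}}$ the standard estimate gives $Ch_j^2\abs{v}_{H^2(\Omega)}$ (legitimate because $v\in H^2$, unlike $u$), while on $\partial_{\mathrm{n}}\Omega_j^{\mathrm{in}}$ one gets $2\sqrt{\mu_j^{\mathrm{in}}}\,\nnorm{v}_{L^\infty}\le C\sqrt{\mu_j^{\mathrm{in}}}\,\nnorm{v}_{H^2(\Omega)}$; Cauchy--Schwarz and $\nnorm{\pi^{\mathrm{in}}_{j-1}u}\le\nnorm{u}$ finish the proof. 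To repair your argument you should keep your domain splitting but apply it to the coarse-inactive interpolant of the dual test function rather than to any coarse approximant of $u$.
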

\begin{proof}
Let $u\in \op{V}_j^{\mathrm{in}}$, $v\in H^2(\Omega)\cap H_0^1(\Omega)$ be arbitrary, and let $v_{j-1}\in \op{V}_{j-1}^{\mathrm{in}}$ be the
natural interpolant of $v$ in $\op{V}_{j-1}^{\mathrm{in}}$, that is
$$v_{j-1} =\sum_{i\in\op{I}^{(j-1)}} v(P_i^{(j-1)})\varphi_i^{(j-1)}\ .$$
%$v_{j-1}(P_i^{(j-1)}) = v(P_i^{(j-1)})$ if $i\in \op{I}^{(j-1)}$ and $v_{j-1}(P_i^{(j-1)}) = 0$ for $i\notin \op{I}^{(j-1)}$.
Let $T\in\op{T}_{j-1}$ be a coarse element lying in $\Omega_{j}^{\mathrm{in}}$.
If  $T\subseteq\mathrm{Int}_{\mathrm{n}}\Omega_{j}^{\mathrm{in}}$ then
$v_{j-1}$ agrees on $T$ with the interpolant of $v$ in $\op{V}_{j-1}$. Therefore a standard interpolation 
estimate (see~\cite{MR2373954}) applied on $\mathrm{Int}_n \Omega_{j}^{\mathrm{in}}$ gives
\bes
\nnorm{v-v_{j-1}}_{L^2(\mathrm{Int}_n \Omega_{j}^{\mathrm{in}})}\le  C h_{j-1}^2 \abs{v}_{H^2(\mathrm{Int}_n \Omega_{j}^{\mathrm{in}})}
\le C\underline{f}^{-1} h_{j}^2 \abs{v}_{H^2(\Omega)}\ . 
\ees
On a coarse element $T$ that satisfies $T\subseteq \partial_{\mathrm{n}}\Omega_{j}^{\mathrm{in}}$, if $v_{j-1}$ is not identically zero on~$T$
then $v_{j-1}$ and $v$ agree at least for one vertex of $T$ and potentially
disagree at a vertex corresponding to an active constraint. In either case the bound
\mbox{$\nnorm{v_{j-1}}_{L^{\infty}(T)}\le \nnorm{v}_{L^{\infty}(T)}$} holds.
Hence,
\bes
\nnorm{v-v_{j-1}}_{L^2(\partial_{\mathrm{n}}\Omega_{j}^{\mathrm{in}})}\le 2 
\sqrt{\mu_{j}^{\mathrm{in}}}\:\nnorm{v}_{L^{\infty}(\partial_{\mathrm{n}}\Omega_{j}^{\mathrm{in}})}\le C \sqrt{\mu_{j}^{\mathrm{in}}} \:\nnorm{v}_{H^2(\Omega)}\ ,
\ees
by  Sobolev's inequality. We have
\be
\nonumber
\lefteqn{\Abs{\innprd{(I-\pi^{\mathrm{in}}_{j-1})u}{v}}}\\
 &=& \Abs{\innprd{(I-\pi^{\mathrm{in}}_{j-1})u}{v-v_{j-1}} } = \Abs{\int_{\Omega_j^{\mathrm{in}}}(u-\pi^{\mathrm{in}}_{j-1} u)\:(v-v_{j-1})}\\
\label{eq:lemmapproxproj}
&\le&\Abs{\int_{\mathrm{Int}_{\mathrm n} \Omega_j^{\mathrm{in}}}(u-\pi^{\mathrm{in}}_{j-1} u)\:(v-v_{j-1})}+
\Abs{\int_{\partial_{\mathrm{n}} \Omega_j^{\mathrm{in}}}(u-\pi^{\mathrm{in}}_{j-1} u)\:(v-v_{j-1})}\\
&\le& C \nnorm{u-\pi^{\mathrm{in}}_{j-1} u}_{L^2(\Omega_{j}^{\mathrm{in}})}\left(h_j^2 + \sqrt{\mu_{j}^{\mathrm{in}}}\right)\nnorm{v}_{H^2(\Omega)}\ .
\ee
 Since $\nnorm{\pi^{\mathrm{in}}_{j-1} u}\le \nnorm{u}$, the result now follows after
dividing by $\nnorm{v}_{H^2(\Omega)}$ and taking the supremum over all $v\in H^2(\Omega)$.
\end{proof}
%
%%%%%
\begin{proposition}
\label{prop:twogridprec}
If the operators $\op{K}$ and $\op{K}_j$ satisfy Condition~\textnormal{\ref{mgipm:cond:condsmooth}} and the weights  $w_i^{(j)}$  are uniform, 
then there exists a constant $C$ independent on $j$ and the inactive set so that
\begin{equation}
\label{eq:twogridprecnormest}
\nnorm{\op{G}_j^{\mathrm{in}} - \op{M}_j^{\mathrm{in}}} \le C \left(h_j^2+ \sqrt{\mu_{j}^{\mathrm{in}}}\right) \ ,
\end{equation}
where $\mu_{j}^{\mathrm{in}}$ is the Lebesgue measure of $\partial_{\mathrm{n}}\Omega_{j}^{\mathrm{in}}$.
\end{proposition}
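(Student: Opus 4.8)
The plan is to reduce $\op{G}_j^{\mathrm{in}}-\op{M}_j^{\mathrm{in}}$ to a pure ``$\op{K}^*\op{K}$'' difference and then estimate it by a telescoping argument in which every error is $O(h_j^2)$ except the one coming from Lemma~\ref{lma:l2projapprox}. First, an algebraic reduction: using $\pi_{j-1}^{\mathrm{in}}+\rho_{j-1}^{\mathrm{in}}=I$ on $\op{V}_j^{\mathrm{in}}$, $P_j^{\mathrm{in}}\op{E}_j^{\mathrm{in}}=I$, and $P_{j-1}^{\mathrm{in}}\op{E}_{j-1}^{\mathrm{in}}=I$, definitions~\eqref{eq:Gopdef}--\eqref{eq:tgprecdef} give $\op{G}_j^{\mathrm{in}}=P_j^{\mathrm{in}}\op{K}_j^*\op{K}_j\op{E}_j^{\mathrm{in}}+\beta I$ and $\op{M}_j^{\mathrm{in}}=P_{j-1}^{\mathrm{in}}\op{K}_{j-1}^*\op{K}_{j-1}\op{E}_{j-1}^{\mathrm{in}}\pi_{j-1}^{\mathrm{in}}+\beta I$, so the $\beta I$ terms cancel and it remains to bound $\nnorm{P_j^{\mathrm{in}}\op{K}_j^*\op{K}_j\op{E}_j^{\mathrm{in}}-P_{j-1}^{\mathrm{in}}\op{K}_{j-1}^*\op{K}_{j-1}\op{E}_{j-1}^{\mathrm{in}}\pi_{j-1}^{\mathrm{in}}}$.

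Next I pass to a bilinear form. Because the weights are uniform, $P_k^{\mathrm{in}}$ is the orthogonal projector onto $\op{V}_k^{\mathrm{in}}$ for $\innprd{\cdot}{\cdot}_k$; by the norm equivalence~\eqref{eq:equivmeshipl2} (which extends to operator norms) it is enough to bound $\abs{\innprd{(\op{G}_j^{\mathrm{in}}-\op{M}_j^{\mathrm{in}})u}{w}_j}$ by $C(h_j^2+\sqrt{\mu_j^{\mathrm{in}}})\nnorm{u}\nnorm{w}$ for all $u,w\in\op{V}_j^{\mathrm{in}}$. Moving $P_{j-1}^{\mathrm{in}}$ onto the test function via its $\innprd{\cdot}{\cdot}_{j-1}$-self-adjointness and using the defining relation~\eqref{eq:restrdef} of $R_{j-1}$, one obtains $\innprd{(\op{G}_j^{\mathrm{in}}-\op{M}_j^{\mathrm{in}})u}{w}_j=\innprd{\op{K}_j u}{\op{K}_j w}_j-\innprd{\op{K}_{j-1}\pi_{j-1}^{\mathrm{in}}u}{\op{K}_{j-1}\widehat{w}}_{j-1}$, where $\widehat{w}:=P_{j-1}^{\mathrm{in}}R_{j-1}w\in\op{V}_{j-1}^{\mathrm{in}}$ satisfies $\nnorm{\widehat{w}}\le C\nnorm{w}$ by~\eqref{eq:reststab}. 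I then replace every discrete object by its continuous counterpart at the cost of an additive $O(h_j^2)\nnorm{u}\nnorm{w}$: first swap $\innprd{\cdot}{\cdot}_j$ and $\innprd{\cdot}{\cdot}_{j-1}$ for $\innprd{\cdot}{\cdot}$ via Lemma~\ref{lma:innprdapprox}, the resulting $H^1$-norms of $\op{K}_j u,\op{K}_j w,\op{K}_{j-1}\pi_{j-1}^{\mathrm{in}}u,\op{K}_{j-1}\widehat{w}$ being controlled by $L^2$-norms through Lemma~\ref{lma:manyresults}(a); then swap $\op{K}_j,\op{K}_{j-1}$ for $\op{K}$ via Lemma~\ref{lma:manyresults}(e), applied on levels $j$ and $j-1$ (with $h_{j-1}\le\underline{f}^{-1}h_j$). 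What is left to estimate is $\innprd{\op{K}u}{\op{K}w}-\innprd{\op{K}\pi_{j-1}^{\mathrm{in}}u}{\op{K}\widehat{w}}$.

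This is the crucial estimate. Split $\innprd{\op{K}u}{\op{K}w}-\innprd{\op{K}\pi_{j-1}^{\mathrm{in}}u}{\op{K}\widehat{w}}=\innprd{\op{K}(I-\pi_{j-1}^{\mathrm{in}})u}{\op{K}w}+\innprd{\op{K}\pi_{j-1}^{\mathrm{in}}u}{\op{K}(w-\widehat{w})}$. For the first term, Lemma~\ref{lma:manyresults}(b) (with $m=2$) gives $\nnorm{\op{K}(I-\pi_{j-1}^{\mathrm{in}})u}\le C\nnorm{(I-\pi_{j-1}^{\mathrm{in}})u}_{\Hneg{2}(\Omega)}$, which Lemma~\ref{lma:l2projapprox} bounds by $C(h_j^2+\sqrt{\mu_j^{\mathrm{in}}})\nnorm{u}$; combined with $\nnorm{\op{K}w}\le C\nnorm{w}$ (Condition~\ref{mgipm:cond:condsmooth}[a]) this is precisely where the $\sqrt{\mu_j^{\mathrm{in}}}$ in~\eqref{eq:twogridprecnormest} enters. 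For the second term write $w-\widehat{w}=(I-R_{j-1})w+(I-P_{j-1}^{\mathrm{in}})R_{j-1}w$: the contribution of $(I-R_{j-1})w$ is $O(h_j^2)\nnorm{u}\nnorm{w}$ by the restriction estimate~\eqref{eq:restr_approxK} (with $p=2$, available under the uniform-weight hypothesis), while $(I-P_{j-1}^{\mathrm{in}})R_{j-1}w$ is a combination of coarse nodal functions sitting at \emph{active} coarse nodes whose supports meet $\mathrm{supp}(w)$, hence is supported in a set of Lebesgue measure $O(\mu_j^{\mathrm{in}})$ (the numerical boundary $\partial_{\mathrm{n}}\Omega_j^{\mathrm{in}}$ together with an $O(h_j)$-collar); testing against $v\in H^2(\Omega)\cap H^1_0(\Omega)$ and using $H^2(\Omega)\hookrightarrow L^\infty(\Omega)$ for $d\le3$ --- exactly the device used in the proof of Lemma~\ref{lma:l2projapprox} --- gives $\nnorm{(I-P_{j-1}^{\mathrm{in}})R_{j-1}w}_{\Hneg{2}(\Omega)}\le C\sqrt{\mu_j^{\mathrm{in}}}\nnorm{w}$, and then Lemma~\ref{lma:manyresults}(b) together with $\nnorm{\op{K}\pi_{j-1}^{\mathrm{in}}u}\le C\nnorm{u}$ closes the bound. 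Collecting the $O(h_j^2)$ remainders and the two $\sqrt{\mu_j^{\mathrm{in}}}$ contributions yields~\eqref{eq:twogridprecnormest}.

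The main obstacle is making the telescoping of the last two steps tight: all the errors from discrete-to-continuous replacement, and --- more delicately --- from the mismatch between the coarse inactive space $\op{V}_{j-1}^{\mathrm{in}}$ (where $\op{M}_j^{\mathrm{in}}u$ lives) and the fine inactive space $\op{V}_j^{\mathrm{in}}$ (where the test function lives), in particular the restriction-versus-$L^2$-projection discrepancy $I-P_{j-1}^{\mathrm{in}}R_{j-1}$ and its leakage past $\partial_{\mathrm{n}}\Omega_j^{\mathrm{in}}$, must be shown to stay within the budget $O(h_j^2+\sqrt{\mu_j^{\mathrm{in}}})$, so that the only genuine loss of optimality relative to the unconstrained analysis of~\cite{MR2429872} is the one already isolated in Lemma~\ref{lma:l2projapprox}. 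The mechanism making this possible is the smoothing of $\op{K}$ (Condition~\ref{mgipm:cond:condsmooth}[a] and Lemma~\ref{lma:manyresults}(a),(b)), which converts otherwise $O(1)$-sized (in $L^2$) discrepancies localized near the numerical boundary into the harmless $\sqrt{\mu_j^{\mathrm{in}}}$-sized ones measured in $\Hneg{2}(\Omega)$.
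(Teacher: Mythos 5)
Your argument is correct and, for most of its length, follows the same telescoping path as the paper's proof: the same reduction~\eqref{eq:diffGM}, the same discrete-to-continuous swaps via Lemma~\ref{lma:innprdapprox} and Lemma~\ref{lma:manyresults}(a),(e) on levels $j$ and $j-1$, and the same splitting of the remaining continuous term into an $(I-\pi_{j-1}^{\mathrm{in}})$-piece handled by Lemma~\ref{lma:manyresults}(b) together with Lemma~\ref{lma:l2projapprox}, plus a restriction piece handled by~\eqref{eq:restr_approxK} with $p=2$. The genuinely different ingredient is your treatment of the nodal projection $P_{j-1}^{\mathrm{in}}$ inside $\op{M}_j^{\mathrm{in}}$: the paper keeps it on the trial side and bounds the discrepancy (its term $A_0$) directly on the fine level, using that $(I-P_{j-1}^{\mathrm{in}})(\op{K}_{j-1}^*\op{K}_{j-1}\widetilde u)$ vanishes on $\mathrm{Int}_{\mathrm{n}}\Omega_j^{\mathrm{in}}$ and invoking the $L^\infty$-bound of Condition~\ref{mgipm:cond:condsmooth}[c], which gives $C\sqrt{\mu_j^{\mathrm{in}}}\,\nnorm{u}\nnorm{v}$ immediately; you instead transfer $P_{j-1}^{\mathrm{in}}$ to the test function (legitimate, since it is the $\innprd{\cdot}{\cdot}_{j-1}$-orthogonal projection) and absorb $\op{K}(I-P_{j-1}^{\mathrm{in}})R_{j-1}w$ through the $\Hneg{2}$-smoothing of Lemma~\ref{lma:manyresults}(b) and a support-measure bound. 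Your variant has the mild advantage of not needing Condition~\ref{mgipm:cond:condsmooth}[c] at this point, but it places the weight on the claim, stated only parenthetically, that $\mathrm{supp}\bigl((I-P_{j-1}^{\mathrm{in}})R_{j-1}w\bigr)$ has measure at most $C\mu_j^{\mathrm{in}}$; this is true but needs the short counting argument: an active coarse node $i$ can carry a nonzero coefficient of $R_{j-1}w$ only if $\mathrm{supp}\,\varphi_i^{(j-1)}$ contains an inactive fine node, and the fine elements touching that node inside a coarse element having $P_i^{(j-1)}$ as a vertex lie in $\Omega_j^{\mathrm{in}}$ but not in $\mathrm{Int}_{\mathrm{n}}\Omega_j^{\mathrm{in}}$, hence in $\partial_{\mathrm{n}}\Omega_j^{\mathrm{in}}$; so each such coarse patch meets $\partial_{\mathrm{n}}\Omega_j^{\mathrm{in}}$ in measure $\ge c\,h_j^d$, and bounded overlap of coarse patches plus $h_{j-1}\le \underline{f}^{-1}h_j$ yield the desired bound with a constant independent of the inactive set, after which your $H^2(\Omega)\hookrightarrow L^\infty(\Omega)$ duality step closes the estimate exactly as in Lemma~\ref{lma:l2projapprox}. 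One cosmetic remark: the $p=2$ case of~\eqref{eq:restr_approxK} is tied to local symmetry of the mesh rather than literally to uniform weights, but this is the same reading the paper itself uses under the hypotheses of the proposition.
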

\begin{proof}
Since $P_j^{\mathrm{in}} \op{E}_{j}^{\mathrm{in}}=\pi_{j-1}^{\mathrm{in}}+\rho_{j-1}^{\mathrm{in}}=I_{\op{V}_j^{\mathrm{in}}}$, we have 
\be
\label{eq:diffGM}
\op{G}_j^{\mathrm{in}} - \op{M}_j^{\mathrm{in}}  =
 P_j^{\mathrm{in}} \op{K}_j^*\op{K}_j \op{E}_{j}^{\mathrm{in}}- P_{j-1}^{\mathrm{in}}\op{K}_{j-1}^*\op{K}_{j-1}\op{E}_{j-1}^{\mathrm{in}}\pi_{j-1}^{\mathrm{in}}\ .
\ee
The argument has a structure that is similar to the proof of Proposition~4.5 in~\cite{Dra:Pet:ipm} with  changes due to the specific 
approximation properties of $P_{j-1}^{\mathrm{in}}$ and $\pi_{j-1}^{\mathrm{in}}$.
Let~\mbox{$u,v\in \op{V}_j^{\mathrm{in}}$} be arbitrary, and define $\widetilde{u}  =\op{E}_{j-1}^{\mathrm{in}}\pi^{\mathrm{in}}_{j-1} u$. 
Note that $\nnorm{\widetilde{u}}\le \nnorm{u}$.
We first examine the difference
\bes
\lefteqn{\abs{\innprd{\op{K}_{j-1}^*\op{K}_{j-1}\widetilde{u}}{v}_j - \innprd{P_{j-1}^{\mathrm{in}}\op{K}_{j-1}^*\op{K}_{j-1}\widetilde{u}}{v}_j} }\\
&=&\Abs{\int_{\Omega_j^{\mathrm{in}}}(I- P_{j-1}^{\mathrm{in}})(\op{K}_{j-1}^*\op{K}_{j-1}\widetilde{u})\cdot v} =
\Abs{\int_{\partial_{\mathrm{n}}\Omega_j^{\mathrm{in}}}(I- P_{j-1}^{\mathrm{in}})(\op{K}_{j-1}^*\op{K}_{j-1}\widetilde{u})\cdot v}\\
&\le&
\nnorm{(I- P_{j-1}^{\mathrm{in}})(\op{K}_{j-1}^*\op{K}_{j-1}\widetilde{u})}_{L^2(\partial_{\mathrm{n}}\Omega_j^{\mathrm{in}})}\cdot 
\nnorm{v}_{L^2(\partial_{\mathrm{n}}\Omega_j^{\mathrm{in}})}\\
&\le& \sqrt{\mu_{j}^{\mathrm{in}}} \nnorm{(I- P_{j-1}^{\mathrm{in}})(\op{K}_{j-1}^*\op{K}_{j-1}\widetilde{u})}_{L^{\infty}(\partial_{\mathrm{n}}\Omega_j^{\mathrm{in}})}
\cdot \nnorm{v}_{L^2(\partial_{\mathrm{n}}\Omega_j^{\mathrm{in}})}\ ,
\ees
where we used that $(I- P_{j-1}^{\mathrm{in}})(\op{K}_{j-1}^*\op{K}_{j-1}\widetilde{u})$ is zero in $\mathrm{Int}_n\Omega_j^{\mathrm{in}}$ and that 
$v$ is zero outside of $\Omega_j^{\mathrm{in}}$.
Since for $w\in \op{V}_{j-1}$  and $x\in\Omega$ the function value $(I- P_{j-1}^{\mathrm{in}}) w(x)$ lies between $0$ and $w(x)$ we have
\bes
\nnorm{(I- P_{j-1}^{\mathrm{in}})(\op{K}_{j-1}^*\op{K}_{j-1}\widetilde{u})}_{L^{\infty}(\partial_{\mathrm{n}}\Omega_j^{\mathrm{in}})}\le
\nnorm{\op{K}_{j-1}^*\op{K}_{j-1}\widetilde{u}}_{L^{\infty}(\Omega_j)} 
\stackrel{\eqref{mgipm:cond:unif}}{\le} C \nnorm{\widetilde{u}} \le C \nnorm{u}\ .
\ees
Therefore 
\be
\label{eq:Pinest}
\abs{\innprd{\op{K}_{j-1}^*\op{K}_{j-1}\widetilde{u}}{v}_j - \innprd{P_{j-1}^{\mathrm{in}}\op{K}_{j-1}^*\op{K}_{j-1}\widetilde{u}}{v}_j} \le 
C \sqrt{\mu_{j}^{\mathrm{in}}}\:\nnorm{u}\cdot\nnorm{v}\ .
\ee
We return to estimating $(\op{G}_j^{\mathrm{in}} - \op{M}_j^{\mathrm{in}})$. We have
\bes
\lefteqn{\innprd{(\op{G}_j^{\mathrm{in}} - \op{M}_j^{\mathrm{in}})u}{v}_j }\\ 
&=&\innprd{P_j^{\mathrm{in}} \op{K}_j^*\op{K}_j \op{E}_{j}^{\mathrm{in}} u}{v}_j - \innprd{\op{K}_{j-1}^*\op{K}_{j-1}\op{E}_{j-1}^{\mathrm{in}}\pi^{\mathrm{in}}_{j-1} u}{v}_j\\
&&+\overbrace{\innprd{\op{K}_{j-1}^*\op{K}_{j-1}\op{E}_{j-1}^{\mathrm{in}}\pi^{\mathrm{in}}_{j-1} u}{v}_j - 
\innprd{P_{j-1}^{\mathrm{in}}\op{K}_{j-1}^*\op{K}_{j-1}\op{E}_{j-1}^{\mathrm{in}}\pi^{\mathrm{in}}_{j-1} u}{v}_j}^{A_0}\\
&\stackrel{\eqref{eq:restrdef}}{=}&
\innprd{\op{K}_j^*\op{K}_j \op{E}_{j}^{\mathrm{in}} u}{ v}_j - 
\innprd{\op{K}_{j-1}^*\op{K}_{j-1}\op{E}_{j-1}^{\mathrm{in}}\pi^{\mathrm{in}}_{j-1} u}{R_{j-1}v}_{j-1} + A_0\\
&=&\innprd{\op{K}_j\op{E}_{j}^{\mathrm{in}} u}{\op{K}_j \op{E}_{j}^{\mathrm{in}} v}_j - 
\innprd{\op{K}_{j-1}\op{E}_{j-1}^{\mathrm{in}}\pi^{\mathrm{in}}_{j-1} u}{\op{K}_{j-1}\op{E}_{j-1}^{\mathrm{in}} R_{j-1}v}_{j-1}
+ A_0\\
&=&A_0+ \overbrace{\innprd{\op{K}_j\op{E}_{j}^{\mathrm{in}} u}{\op{K}_j \op{E}_{j}^{\mathrm{in}} v}_j - 
\innprd{\op{K}_j\op{E}_{j}^{\mathrm{in}} u}{\op{K}_j \op{E}_{j}^{\mathrm{in}} v}}^{A_1}+\\
&&\overbrace{\innprd{\op{K}_j\op{E}_{j}^{\mathrm{in}} u}{\op{K}_j \op{E}_{j}^{\mathrm{in}} v} - 
\innprd{\op{K}\op{E}_{j}^{\mathrm{in}} u}{\op{K}\op{E}_{j}^{\mathrm{in}} v}}^{A_2}+\\
&&\overbrace{\innprd{\op{K}\op{E}_{j}^{\mathrm{in}} u}{\op{K}\op{E}_{j}^{\mathrm{in}} v} - 
\innprd{\op{K}\op{E}_{j-1}^{\mathrm{in}}\pi^{\mathrm{in}}_{j-1} u}{\op{K}\op{E}_{j-1}^{\mathrm{in}}  R_{j-1}v}}^{A_3}+\\
&&\overbrace{\innprd{\op{K}\op{E}_{j-1}^{\mathrm{in}}\pi^{\mathrm{in}}_{j-1} u}{\op{K}\op{E}_{j-1}^{\mathrm{in}}  R_{j-1}v}
- \innprd{\op{K}_{j-1}\op{E}_{j-1}^{\mathrm{in}}\pi^{\mathrm{in}}_{j-1} u}{\op{K}_{j-1}\op{E}_{j-1}^{\mathrm{in}}  R_{j-1}v}}^{A_4}+\\
&&\overbrace{\innprd{\op{K}_{j-1}\op{E}_{j-1}^{\mathrm{in}}\pi^{\mathrm{in}}_{j-1} u}{\op{K}_{j-1}\op{E}_{j-1}^{\mathrm{in}}  R_{j-1}v}- 
\innprd{\op{K}_{j-1}\op{E}_{j-1}^{\mathrm{in}}\pi^{\mathrm{in}}_{j-1} u}{\op{K}_{j-1}\op{E}_{j-1}^{\mathrm{in}}  R_{j-1}v}_{j-1}}^{A_5}\ .
\ees
By~\eqref{eq:Pinest}
$$\abs{A_0}\le C\sqrt{\mu_{j}^{\mathrm{in}}}\:\nnorm{u}\cdot\nnorm{v}\ .$$
For $A_1$ and $A_5$ we use~\eqref{eq:innprdtsapprox},~\eqref{eq:H1stabdisc}, and~\eqref{eq:reststab} to conclude that
\bes
\abs{A_1}&\le& C h_j^2 \nnorm{\op{K}_j\op{E}_{j}^{\mathrm{in}} u}_{H^1(\Omega)}\cdot \nnorm{\op{K}_j\op{E}_{j}^{\mathrm{in}} v}_{H^1(\Omega)}\le 
 C h_j^2 \nnorm{u}\cdot\nnorm{v}\ ,\\
\abs{A_5}&\le& C h_{j-1}^2 \nnorm{\op{K}_{j-1}\op{E}_{j-1}^{\mathrm{in}} \pi_{j-1}^{\mathrm{in}} u}_{H^1(\Omega)}\cdot 
\nnorm{\op{K}_j\op{E}_{j}^{\mathrm{in}}   R_{j-1}v}_{H^1(\Omega)} \\
&\le & C \underline{f}^{2} h_j^2 \nnorm{\pi_{j-1}^{\mathrm{in}} u}\cdot\nnorm{R_{j-1}v}\le C h_j^2 \nnorm{u}\cdot\nnorm{v} \ .
\ees
Estimation of $A_2, A_4$, essentially involving~\eqref{eq:innprdKKH}, is handled exactly as in~\cite{Dra:Pet:ipm} to give
\bes
\max(\abs{A_2},\abs{A_4})\le C h_j^2 \nnorm{u}\cdot\nnorm{v} \ .
\ees
Finally the term $A_3$ is estimated by
\bes
\abs{A_3}&\le& \abs{\innprd{\op{K}\op{E}_{j}^{\mathrm{in}}(I -\pi^{\mathrm{in}}_{j-1}) u}{\op{K}\op{E}_{j}^{\mathrm{in}} v}} + 
\abs{\innprd{\op{K}\op{E}_{j-1}^{\mathrm{in}}\pi^{\mathrm{in}}_{j-1} u}{(\op{K}\op{E}_{j}^{\mathrm{in}}(I - R_{j-1})v}}\\
&\le& C \left(h_j^2+\sqrt{\mu_{j}^{\mathrm{in}}}\right)\nnorm{u}\cdot\nnorm{v} \ .
\ees
We conclude that
\bes
\Abs{\innprd{(\op{G}_j^{\mathrm{in}} - \op{M}_j^{\mathrm{in}})u}{v}_j}\le C \left(h_j^2+\sqrt{\mu_{j}^{\mathrm{in}}}\right)
\nnorm{u}\cdot\nnorm{v},\ \forall u, v\in \op{V}^{\mathrm{in}}_j\ , 
\ees
hence $\enorm{\op{G}_j^{\mathrm{in}} - \op{M}_j^{\mathrm{in}}}_j\le \left(h_j^2+\sqrt{\mu_{j}^{\mathrm{in}}}\right)$, and the result follows from the 
equivalence of the operator-norms $\enorm{\cdot}_j,\ \nnorm{\cdot}$.
\end{proof}

Theorem~\ref{maintheorem} now follows easily from Proposition~\ref{prop:twogridprec}:
\begin{proof}
First note that for $u\in\op{V}_j^{\mathrm{in}}$ 
\be
\innprd{\op{G}_j^{\mathrm{in}}u}{u}_j = \innprd{\op{K}_j^{\mathrm{in}}\op{E}^{\mathrm{in}}u}{\op{K}_j^{\mathrm{in}}\op{E}^{\mathrm{in}}u}_j+
\beta\innprd{u}{u}_j \ge \beta\innprd{u}{u}_j\ ,
\ee
so $\sigma(\op{G}_j^{\mathrm{in}}) \subseteq[\beta,\infty)$, which implies, due to the symmetry of  
$\op{G}_j^{\mathrm{in}}$ with respect to $\innprd{\cdot}{\cdot}_j$ and the equivalence of the operator-norms $\enorm{\cdot}_j,\ \nnorm{\cdot}$, that 
$$\nnorm{\left(\op{G}_j^{\mathrm{in}}\right)^{-\frac{1}{2}}} \le C \beta^{-\frac{1}{2}}\ .$$
By Proposition~\ref{prop:twogridprec}
$$\nnorm{I-\left(\op{G}_j^{\mathrm{in}}\right)^{-\frac{1}{2}} \op{M}_j^{\mathrm{in}} \left(\op{G}_j^{\mathrm{in}}\right)^{-\frac{1}{2}}}\le
C\nnorm{\left(\op{G}_j^{\mathrm{in}}\right)^{-\frac{1}{2}}}^2\cdot\nnorm{\op{G}_j^{\mathrm{in}}-\op{M}_j^{\mathrm{in}}}\le
C \beta^{-1}\left(h_j^2+\sqrt{\mu_{j}^{\mathrm{in}}}\right)\ .$$
The remainder of the argument proceeds as in the proof of Theorem 4.9 in~\cite{Dra:Pet:ipm}.
\end{proof}

%
%%%% SECTION MULTIGRID
%
\section{Analysis of the multigrid preconditioner}
\label{sec:mganalysis}
We begin with a few technical results concerning the spectral distance.
\begin{lemma}
\label{lma:techlma1}
If $w_1, w_2 \in \H=\{z\in \C\ :\ \Re(z)>0\}$, then
\be
\label{eq:w12xineq}
\Abs{\ln\frac{w_1+x}{w_2+x}} \le \Abs{\ln\frac{w_1}{w_2}},\ \ \forall x\ge 0\ ,
\ee
where $\Re(z)$ denotes the real part of a complex number $z$. 
\end{lemma}

We postpone  the proof of Lemma~\ref{lma:techlma1} until Appendix~\ref{sec:specdist}.
The next result shows that $\mathfrak{I}_{j-1}^j$ is non-expansive when measured in the spectral distance.
\begin{lemma}
\label{lma:nonexp}
For all $j=1,2,...$, and $\op{G},\op{H}\in \mathfrak{L}_+(\op{V}_{j-1})$
\be
\label{eq:nonexp}
d_{\sigma}(\mathfrak{I}_{j-1}^j(\op{G}),\mathfrak{I}_{j-1}^j(\op{H}))\le d_{\sigma}(\op{G},\op{H})\ ,
\ee
where the spectral distance is computed using the $L^2$-inner product.
\end{lemma}
\begin{proof}
Let $u\in \op{V}_{j}^{\C}$ be arbitrary so that $\pi_{j-1}^{\mathrm{in}}u\ne 0$. Then 
\bes
\lefteqn{\Abs{\ln\frac{\innprd{\op{G}\pi_{j-1}^{\mathrm{in}}u + \beta^{-1} \rho_{j-1}^{\mathrm{in}}u}{u}}{\innprd{\op{H}\pi_{j-1}^{\mathrm{in}}u + 
	\beta^{-1} \rho_{j-1}^{\mathrm{in}}u}{u}}}}\\
&=&
\Abs{\ln\frac{\innprd{\op{G}\pi_{j-1}^{\mathrm{in}}u}{\pi_{j-1}^{\mathrm{in}}u} + \beta^{-1} \innprd{\rho_{j-1}^{\mathrm{in}}u}{\rho_{j-1}^{\mathrm{in}}u}}
  {\innprd{\op{H}\pi_{j-1}^{\mathrm{in}}u}{\pi_{j-1}^{\mathrm{in}}u} + \beta^{-1} \innprd{\rho_{j-1}^{\mathrm{in}}u}{\rho_{j-1}^{\mathrm{in}}u}}}
\stackrel{\eqref{eq:w12xineq}}{\le}
\Abs{\ln\frac{\innprd{\op{G}\pi_{j-1}^{\mathrm{in}}u}{\pi_{j-1}^{\mathrm{in}}u}}
  {\innprd{\op{H}\pi_{j-1}^{\mathrm{in}}u}{\pi_{j-1}^{\mathrm{in}}u}}}\\
&\le & \sup_{v\in \op{V}_{j}^{\C}\setminus \{0\}}
\Abs{\ln\frac{\innprd{\op{G}v}{v}}
  {\innprd{\op{H}v}{v}}} = d_{\sigma}(\op{G}, \op{H})\ .
\ees
The above holds trivially if $\pi_{j-1}^{\mathrm{in}}u = 0$ and $u\ne 0$, so~\eqref{eq:nonexp} follows by taking the supremum over all
$u\in \op{V}_{j}^{\C}\setminus \{0\}$.
\end{proof}

Note that for symmetric operators Lemma~\ref{lma:nonexp} reduces to Lemma~5.1 in~\cite{MR2429872}. However, as mentioned earlier,
neither $\op{G}_j^{\mathrm{in}}$ nor $\op{M}_j^{\mathrm{in}}$ are expected to be symmetric with respect to
the $L^2$-inner product, hence the need for Lemma~\ref{lma:nonexp}.

\begin{lemma}
\label{lma:sqconvnewt}
If the operators $\op{K}, \op{K}_j$ satisfy Condition~$\ref{mgipm:cond:condsmooth}$ with constant $C$, then there exist $\delta>0$ and $C_1=C_1(\beta, C)$
so that, if $\op{X}\in\mathfrak{L}(\op{V}^{\mathrm{in}}_j)$ satisfies $d_{\sigma}(\op{X},(\op{G}^{\mathrm{in}}_j)^{-1})< \delta$, then
\be
\label{eq:sqconvnewt}
d_{\sigma}(\mathfrak{N}_j(\op{X}),(\op{G}^{\mathrm{in}}_j)^{-1})\le C_1 \left(d_{\sigma}(\op{X},(\op{G}^{\mathrm{in}}_j)^{-1})\right)^2\ .
\ee
\end{lemma}
The proof of Lemma~\ref{lma:sqconvnewt} is similar to that of Theorem~3.12 in~\cite{MR2429872}, except that here none of the operators involved
is symmetric with respect to the $L^2$-inner product. Unfortunately, the lack of appropriate analysis tools for the non-symmetric case
(see also the comment following Lemma~\ref{lma:sqnonsymest})
allow for a weaker than desired result, with $C_1$ depending on $\beta$ instead of being universal. For completeness we give the proof of 
Lemma~\ref{lma:sqconvnewt} in Appendix~\ref{sec:specdist}.

We recall Lemma~5.3 from~\cite{MR2429872} as
\begin{lemma}
\label{eq:lma53}
Let $(e_j)_{j\ge 0}$ and $(a_j)_{j \ge 0}$ be positive numbers satisfying
\begin{equation}
\label{eq:mg_err_abst}
e_{j} \le C(e_{j-1} + a_{j})^2\ ,\ \  a_{j}  \le  a_{j-1} \le f^{-1} a_{j},\ \ j=1, 2, \dots, 
\end{equation}
for some $0<f<1$. If $a_0 \le \frac{f}{4 C}$ and if $e_0 \le 4 C a_0^2$, then 
\begin{equation}
e_j \le 4 C a_j^2,\ \ \forall j> 0\ .
\end{equation}  
\end{lemma}
\begin{theorem}
\label{thm:mganalysis}
Assume that the operators $\op{K}$ and $\op{K}_j$ satisfy Condition~\textnormal{\ref{mgipm:cond:condsmooth}} and the weights  $w_i^{(j)}$  are uniform. 
Given an index $j_{\max}\ge 1$ let the index-sets~\mbox{$\op{I}^{(j)}$}, \mbox{$0\le j \le j_{\max}$}, be given as 
in Algorithm~$\ref{alg:inactive_sets}$, and consider the corresponding inactive domains $\Omega_{j}^{\mathrm{in}}$. If 
$\mu^{\mathrm{in}}_j$ is the Lebesgue measure of $\partial_{\mathrm{n}}\Omega_{j}^{\mathrm{in}}$, it is assumed that
there exists $f\in (1/4,1)$ so that $\mu^{\mathrm{in}}_j\le \mu^{\mathrm{in}}_{j-1}\le f^{-2} \mu^{\mathrm{in}}_j$
for $j=1,2, \dots, j_{\max}$  and that \mbox{$\beta^{-1}(h_0^2+\sqrt{\mu_0^{\mathrm{in}}})\le f/(4 C_1)$} with $C_1$ 
as in Lemma~$\ref{lma:sqconvnewt}$.
Then there exists a constant $C_2=C_2(C_1, \beta)$ so that
\be
\label{eq:multigridest}
d_{\sigma}\left(\op{Z}^{\mathrm{in}}_{j_{\max}}, (\op{G}^{\mathrm{in}}_{j_{\max}})^{-1} \right) \le 
C_2\beta^{-1}\left(h_{j_{\max}}^2+\sqrt{\mu_{j_{\max}}^{\mathrm{in}}}\right)\  .
\ee
\end{theorem}
\begin{proof}
With $\op{Z}^{\mathrm{in}}_{j}$ given by Algorithm~\ref{alg:multigrid}, 
denote by $e_j=d_{\sigma}(\op{Z}^{\mathrm{in}}_{j}, (\op{G}^{\mathrm{in}}_j)^{-1})$ and
by \mbox{$a_j=C \beta^{-1}(h_j^2+\sqrt{\mu_j^{\mathrm{in}}})$} with $C$ as in Theorem~\ref{maintheorem}. The assumption on
$\mu^{\mathrm{in}}_j$ implies that $a_j\le a_{j-1} \le f^{-1} a_{j}$. From Theorem~\ref{maintheorem} 
we know that $d_{\sigma}(\op{S}^{\mathrm{in}}_{j}, (\op{G}^{\mathrm{in}}_j)^{-1})\le a_j$.
Then, for $1\le j\le j_{\max}-1$
\bes
e_j&=& d_{\sigma}(\op{Z}^{\mathrm{in}}_{j}, (\op{G}^{\mathrm{in}}_j)^{-1}) =
d_{\sigma}(\mathfrak{N}_j(\mathfrak{I}_{j-1}^j (\op{Z}^{\mathrm{in}}_{j-1})), (\op{G}^{\mathrm{in}}_j)^{-1})\\
&\stackrel{\eqref{eq:sqconvnewt}}{\le}& C_1\left(d_{\sigma}(\mathfrak{I}_{j-1}^j (\op{Z}^{\mathrm{in}}_{j-1}),(\op{G}^{\mathrm{in}}_j)^{-1})\right)^2\\
&\le& C_1\left(d_{\sigma}(\mathfrak{I}_{j-1}^j (\op{Z}^{\mathrm{in}}_{j-1}),\op{S}^{\mathrm{in}}_j)+
d_{\sigma}(\op{S}^{\mathrm{in}}_j,(\op{G}^{\mathrm{in}}_j)^{-1}))\right)^2\\
&\stackrel{\eqref{eq:sdefop}}{=} &C_1\left(d_{\sigma}(\mathfrak{I}_{j-1}^j (\op{Z}^{\mathrm{in}}_{j-1}),\mathfrak{I}_{j-1}^j ((\op{G}^{\mathrm{in}}_{j-1})^{-1})+
d_{\sigma}(\op{S}^{\mathrm{in}}_j,(\op{G}^{\mathrm{in}}_j)^{-1}))\right)^2\\
&\stackrel{\eqref{eq:nonexp}}{\le} &C_1\left(d_{\sigma}(\op{Z}^{\mathrm{in}}_{j-1},(\op{G}^{\mathrm{in}}_{j-1})^{-1})+
a_j\right)^2 = C_1\left(e_{j-1}+a_j \right)^2\ .
\ees
By Lemma~\ref{eq:lma53} we have
\be
\label{eq:onetolast}
e_{j_{\max}-1}\le 4 C_1 a_{j_{\max}-1}^2\ .
\ee
Finally
\bes
e_{j_{\max}} &= &d_{\sigma}(\op{Z}^{\mathrm{in}}_{j_{\max}}, (\op{G}^{\mathrm{in}}_{j_{\max}})^{-1}) \le
d_{\sigma}(\op{Z}^{\mathrm{in}}_{j_{\max}}, \op{S}^{\mathrm{in}}_{j_{\max}})
+d_{\sigma}(\op{S}^{\mathrm{in}}_{j_{\max}}, (\op{G}^{\mathrm{in}}_{j_{\max}})^{-1})\\
&\stackrel{\eqref{eq:nonexp}}{\le} &
e_{j_{\max}-1}+a_{j_{\max}}\stackrel{\eqref{eq:onetolast}}{\le} 4 C_1 a_{j_{\max}-1}^2 + a_{j_{\max}}
\le (4 C_1 f^{-2}+1)a_{j_{\max}}\ ,
\ees
which proves~\eqref{eq:multigridest} with $C_2=(4 C_1 f^{-2}+1)$.
\end{proof}
\begin{remark}
\label{rem:mgthm}
As stated in Remark~$\ref{rem:sqrtest}$,  under certain,
natural conditions it is expected
that  $\mu_{j}^{\mathrm{in}}\approx C h_j$, case in which
the factor $f$ in Theorem~$\ref{thm:mganalysis}$  is $1/\sqrt{2}$.
However, while the estimate~$\eqref{eq:multigridest}$ is (asymptotically)
as good as the two-grid estimate in terms of $h_{j_{\max}}$ and 
$\mu^{\mathrm{in}}_{j_{\max}}$, the dependence of $C_2$ on $\beta$
is suboptimal. However, we believe that this is only an artifact of the analysis
of which the weak link is Lemma~$\ref{lma:sqnonsymest}$.
\end{remark}

\section{Numerical experiments}
\label{sec:numerics}
We test our algorithm on the standard linear elliptic-constrained distributed optimal control problem
\begin{eqnarray}
\label{mgipm:eq:lap}
  \begin{array}{cc}\vspace{7pt}
    \textnormal{minimize}& \frac{1}{2}\norm{y-y_d}^2 +
    \frac{\beta}{2}\norm{u}^2\\   \textnormal{subject to\ \ } &-\Delta
    y=u\ ,\ y\in H_0^1(\Omega),\  a\le u \le b\
    \  a.e.\  \mathrm{in}\ \Omega,
  \end{array}
\end{eqnarray}
where $\Delta$ is the Laplace operator acting on $H_0^1(\Omega)$, so $\op{K}=(-\Delta)^{-1}$.
The problem~\eqref{mgipm:eq:lap} is often used as a test example for multigrid 
algorithms in PDE constrained optimization (e.g., see~\cite{MR2505585, Dra:Pet:ipm}). 
%We chose to leave only the lower
%inequality constraint $a(x)=0$ in place, since this suffices to create a sequence
%of nontrivial inactive sets in the solution process.
Parts {\bf [a, b]} of 
Condition~\ref{mgipm:cond:condsmooth} follow from standard estimates for
finite element solutions of elliptic equations, while condition~{\bf [c]} follows 
from standard \mbox{$L^{\infty}$-estimates~\cite{MR2322235}.}
Note that the $L^{\infty}$-stability in {\bf [c]} does not necessitate an optimal 
\mbox{$L^{\infty}$-convergence} rate of the discrete solutions to their
continuous counterparts, but merely convergence:
$$\lim_{h_j\to 0} \nnorm{\op{K}_j u-\op{K} u}_{L^{\infty}} = 0\ .$$

We conduct three kinds of experiments to test different aspects of the method: first we consider 
an `in vitro' experiment where we artificially fix a nontrivial subset of the domain $\Omega$
based on which we construct  (artificial) inactive sets on all grids. The goal of this experiment is
to assess the change in quality of the two-grid preconditioner while varying only the mesh-size and the 
regularizing parameter~$\beta$. 
For each grid we then construct the two-grid preconditioner and we estimate numerically the
decay rate of the spectral distance $d_\sigma((\op{G}_j^{\mathrm{in}})^{-1}, \op{S}_j^{\mathrm{in}})$.
The advantage of this approach is that we can isolate the effect of the inactive set
on the quality of the preconditioner, since in the context of actually applying 
the SSNM, the inactive sets are expected to change from one resolution
to another. The second test is an actual, `in vivo' application of the two-preconditioner 
while solving~\eqref{mgipm:eq:lap} using the SSNM and we compare its performance with Hackbusch's multigrid method of the second kind.
In the third set of tests we compare actual runtimes of the linear solves  based on symmetric multigrid-preconditioned CG
with those performed with unpreconditioned CG in an actual SSNM solution process.

%We should mention that the multigrid preconditioner behaves similarly to the two-grid preconditioner 
%(comparable number of CGS iterations), provided the coarsest level used is sufficiently fine to
%represent the geometry of the inactive sets (see also~\cite{Dra:Pet:ipm}). 
%In this work we restrict our experiments to two-grid preconditioning; however, in practice, it is recommended
%to use the multigrid preconditioner presented in Section~\ref{ssec:multigrid} since the computational
%cost decreases significantly with an increasing number of levels.

\subsection{\bf One-dimensional `in vitro' experiments}
\label{ssec:onednumexample}
Let $\Omega=[0,1]$ and designate the interval \mbox{$\Omega^{\mathrm{in}}=[1/8,3/4]$} as the set where `inactive vertices'
reside; this set is used for all grids. Note that in this experiment we do not solve a control problem, and the designated
inactive nodes do not correspond to a SSNM iterate.
Let $n_j=16\cdot 2^{j}, j=0, 1, 2, \dots$, and define $\op{T}_j$ to be the uniform grid
with $n_j$ intervals on $\Omega$ with mesh-size \mbox{$h_j=1/n_j$}. The `inactive indices'
are given by 
$$\op{I}_j^{\mathrm{in}} = \{i\in\{1,\dots,n_{j}-1\ :\ i\;h_j\in \Omega^{\mathrm{in}}\}\ .$$ 
Correspondingly, we construct the matrices 
$\mat{G}_j^{\mathrm{in}}$ and $\mat{M}_j^{\mathrm{in}}$ as in Section~\ref{sssec:matrixform} and we compute the 
quantities
$$d_j=\max\{ \abs{\ln \lambda}\ :\ \lambda\in \sigma(\mat{G}_j^{\mathrm{in}}, \mat{M}_j^{\mathrm{in}})\}\ ,\ j=1,2,\dots\ ,$$
where $\sigma(\mat{A},\mat{B})$ is the set of generalized eigenvalues of the matrices $\mat{A},\mat{B}$.
In general we have
\bes
%\label{eq:spectdistapprox}
d_j\le d_{\sigma}(\mat{G}_j^{\mathrm{in}}, \mat{M}_j^{\mathrm{in}})\ ,
\ees
but if both matrices are symmetric then the above inequality becomes an equality. In this case
$\mat{G}_j^{\mathrm{in}}$ is symmetric and $\mat{M}_j^{\mathrm{in}}$ is close to being symmetric, so we
expect that $d_j$ is a good approximation of $d_{\sigma}(\mat{G}_j^{\mathrm{in}}, \mat{M}_j^{\mathrm{in}})$.
For each of $\beta=1, 0.1, 0.01$ we report in Table~\ref{tab:specdist1d} both the numbers $d_j$ and their ratios $d_{j-1}/d_j$ for $j=2,\dots, 6$,
which clearly indicate that the numerical results are consistent
with the estimate $d_j \le C\: \sqrt{h_j}$ in Remark~\ref{rem:sqrtest}, namely we 
notice that $d_{j-1}/d_j\to \sqrt{2}$ for each $\beta$ considered.
\begin{table}
  \caption{Spectral distance decay for a fixed inactive domain $[1/8,3/4]\subset \Omega=[0,1]$.}
      {\begin{tabular}{@{}lcccccc}\hline
	  grid-size ($n_j$)&
	  $16$ & $32$ & $64$ & $128$ & $256$ & $512$\\\hline
	  $d_j$ ($\beta=1$) & $2.28\cdot 10^{-3}$ &  $1.56\cdot 10^{-3}$ & $1.08\cdot 10^{-3}$ & $7.46\cdot 10^{-4}$ & 
	  $5.20\cdot 10^{-4}$ & $3.64\cdot 10^{-4}$\\ \hline
	  $\frac{d_{j-1}}{d_j}$ ($\beta=1$) & -- &$1.4610$ & $1.4506$ &  $1.4421$ & $1.4351$ & $1.4295$\\
	  \hline
	  $d_j$ ($\beta=0.1$) & $2.18\cdot 10^{-2}$ &  $1.49\cdot 10^{-2}$ & $1.03\cdot 10^{-2}$ & $7.16\cdot 10^{-3}$ & 
	  $4.99\cdot 10^{-3}$ & $3.49\cdot 10^{-3}$\\ \hline
	  $\frac{d_{j-1}}{d_j}$ ($\beta=0.1$) & -- &$1.4589$ & $1.4491$ &  $1.4410$ & $1.4343$ & $1.4290$\\
	  \hline
	  $d_j$ ($\beta=0.01$) & $1.59\cdot 10^{-1}$ &  $1.10\cdot 10^{-1}$ & $7.66\cdot 10^{-2}$ & $5.34\cdot 10^{-2}$ & 
	  $3.74\cdot 10^{-2}$ & $2.62\cdot 10^{-2}$\\ \hline
	  $\frac{d_{j-1}}{d_j}$ ($\beta=0.01$) & -- &$1.4459$ & $1.4398$ &  $1.4343$ & $1.4296$ & $1.4257$\\
	  \hline
      \end{tabular}}
      \label{tab:specdist1d}
\end{table}

Another advantage of the one-dimensional example is that we can easily compute numerically 
all the generalized eigenvalues of $\mat{G}_j^{\mathrm{in}}$ and $\mat{M}_j^{\mathrm{in}}$. What may be surprising, is that, while
the largest of them decay at the predicted rate of $\sqrt{h_j}$ as $h_j\downarrow 0$, for each grid
most of the generalized eigenvalues are very close to $1$. In fact, for a fixed grid, if we order the 
generalized eigenvalues according to their distance from $1$, we notice an 
exponential decay of $\abs{\lambda_i-1}$ (see  Figure~\ref{fig:jointspec}). 
\begin{figure}[!htb]
\begin{center}
        \includegraphics[width=6in]{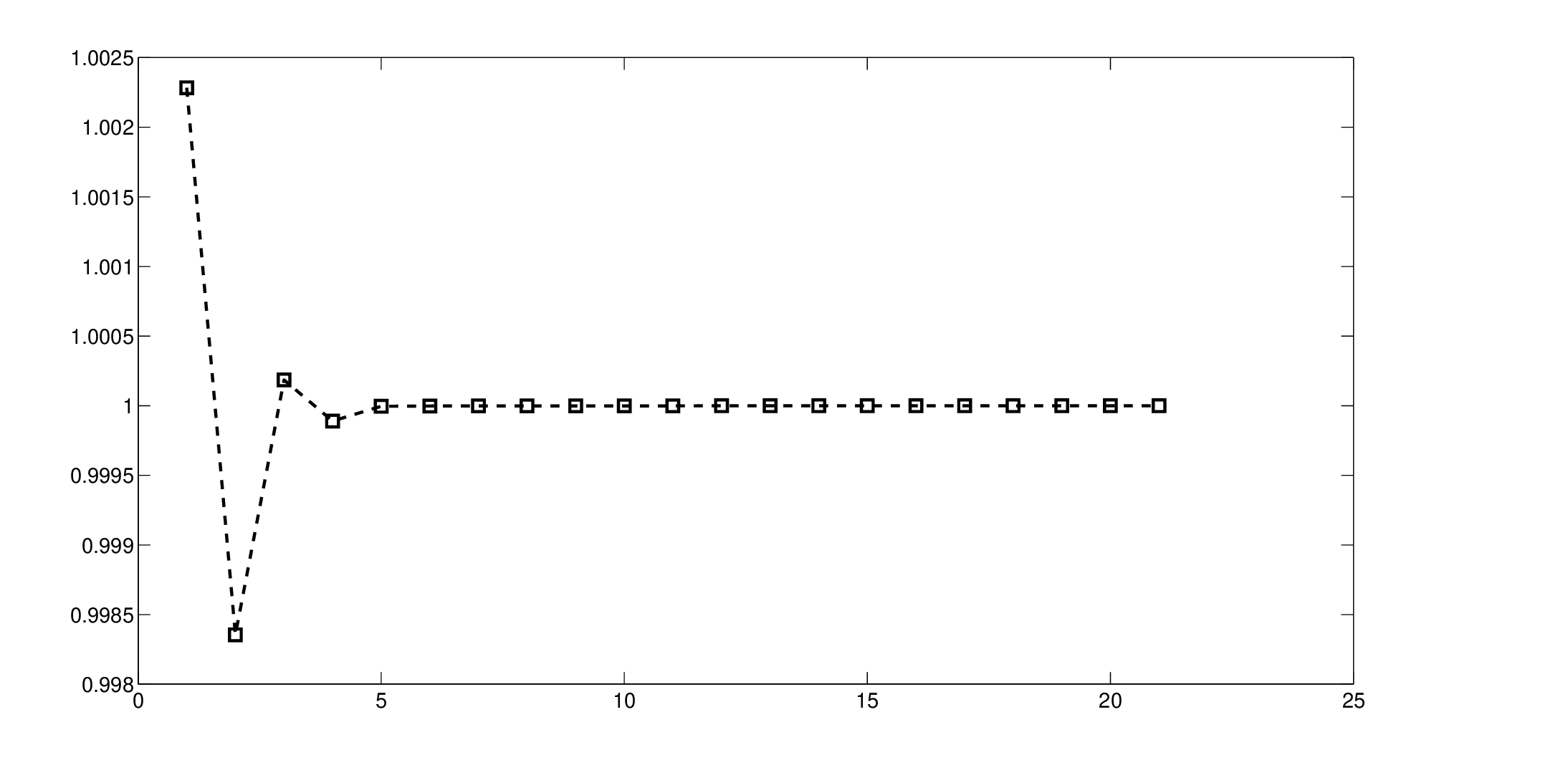}%constr_unconstr3.eps
\caption{Absolute values of the generalized eigenvalues of $\mat{G}_2^{\mathrm{in}}$ and $\mat{M}_2^{\mathrm{in}}$ 
$(n_2=32)$. There are $21$ eigenvalues, and their distance to $1$ decays rapidly.} 
\label{fig:jointspec}
\end{center}
\end{figure}
We also plot in Figure~\ref{fig:largesteig} the function $u\in \op{V}_2^{\mathrm{in}}$ associated with the generalized eigenvector~$\mat{u}$ 
that corresponds to the generalized eigenvalue which is furthest from $1$. What we notice is that $u$ is large around the 
boundary of $\Omega^{\mathrm{in}}$. These facts suggest that, with the exception of a limited number of generalized
eigenvalues associated with eigenvectors strongly related to the boundary of $\Omega^{\mathrm{in}}$, the generalized eigenvalues
$\mat{G}_j^{\mathrm{in}}$ and $\mat{M}_j^{\mathrm{in}}$ are in fact quite close 1. We revisit this idea in Section~\ref{ssec:remarks}.

\begin{figure}[!htb]
\begin{center}
        \includegraphics[width=6in]{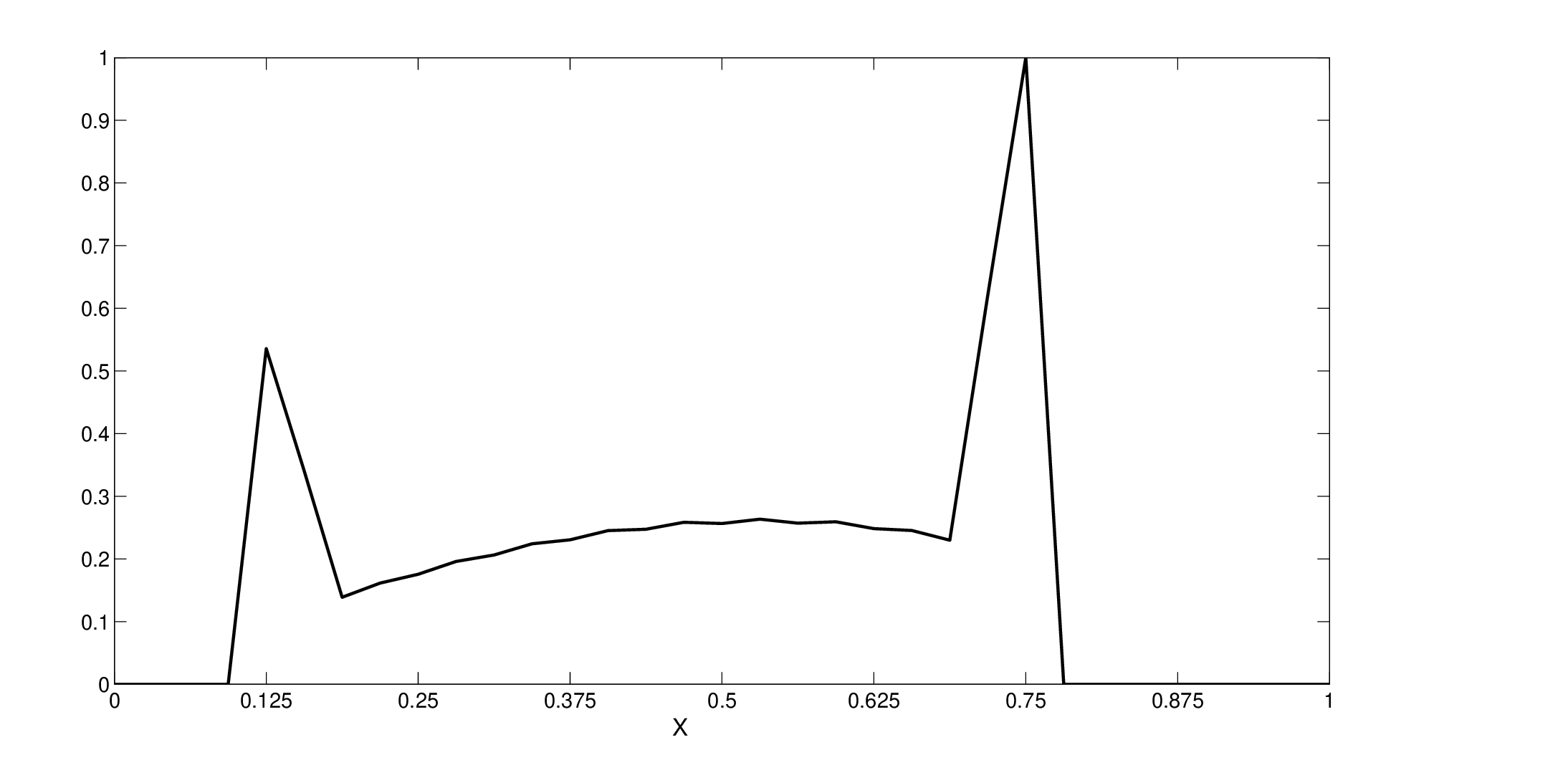}%constr_unconstr3.eps
\caption{The generalized eigenvector corresponding to the generalized eigenvalue $\mat{G}_2^{\mathrm{in}}$ and $\mat{M}_2^{\mathrm{in}}$
which is furthest from $1$.} 
\label{fig:largesteig}
\end{center}
\end{figure}

\subsection{Two-dimensional `in-vivo' experiments.}
\label{ssec:twodimnum}
We now consider the two-dimensional version of~\eqref{mgipm:eq:lap} with $\Omega=[0,1]^2$, and, for simplicity, we impose only the lower bound-constraint 
$a\equiv 0$. To construct 
uniform triangular grids $\op{T}_j$, $j=0,1,\dots,$ we first divide each side of $\Omega$ uniformly  in 
$n_j=64\cdot 2^{j}$ intervals to obtain a uniform rectangular grid, and we further divide each resulting 
grid-square in two triangles along the diagonal of slope $-1$; the mesh-size is thus $h_j=1/n_j$, and the number
of variables is $N_j=(n_j-1)^2$. The Poisson problem is then discretized using standard continuous piecewise linear 
elements on a triangular grid. We solve the problem using grid-sequencing, that is, the solution on level $(j-1)$ 
is used as an initial guess for the semismooth Newton iteration at level~$j$. More precisely, if $\Omega_{j-1}^{\mathrm{in}}$
denotes the coarse inactive domain as defined in Section~\ref{ssec:multigriddef}, we define the 
initial guess at the inactive  set on level $j$ by 
$$\op{I}_0^{(j)} = \{i\in\{1,\dots,N_j\}\ :\ \mathrm{supp}(\varphi_i^{(j)})\subseteq \Omega_{j-1}^{\mathrm{in}}\}\ .$$
Then we apply the semismooth Newton iteration described in Section~\ref{ssec:ssnm}, and we solve the reduced linear 
systems~\eqref{eq:disclinsysfin1}
at each outer iteration using CGS preconditioned by the two-grid
explicit preconditioner $\op{M}_j^{\mathrm{in}}$. For comparison, we also solve~\eqref{eq:disclinsysfin1}
(using the same coarse spaces and transfer operators defined in Section~\ref{ssec:multigriddef})
using Hackbusch's multigrid method of the second kind (e.g., see~\cite{MR814495}, Ch.~16, 
or~\cite{hackbusch81} for the original source). In order to do so we remark that the matrix 
in~\eqref{eq:disclinsysfin1} has the form $\mat{G}^{\mathrm{in},k} = \beta\mat{I}+ \mat{H},$
therefore we can rewrite the system $(\beta\mat{I}+ \mat{H})\mat{u}=\mat{r}$ as a fixed point problem:
\be
\label{eq:fixedpoint}
\vect{u}=\beta^{-1}\mat{r}-\beta^{-1}\mat{H} \mat{u}\ .
\ee
We also solve the systems~\eqref{eq:disclinsysfin1} using unpreconditioned 
CG. Since the solution process is matrix-free (we solve the Poisson problem using classical multigrid) 
we use the explicit form $\op{S}_j^{\mathrm{in}}$ of the inverse of $\op{M}_j^{\mathrm{in}}$ as given in~\eqref{eq:tgprecdefinv}. For each outer iteration
we record the number of two-grid preconditioned CGS iterations required for solving each linear system~\eqref{eq:disclinsysfin1}, as well
as the number of iterations in which the multigrid (here, two-grid) method of the second kind converged.
 Note that each CGS iteration requires two fine-grid matrix-vector multiplications
(mat-vecs), and the same holds for the multigrid  method of the second kind, so this should be a fair comparison.

For the numerical example we choose as `target' control the function defined by
$$u_d(x) = \left\{\begin{array}{cll}\vspace{5pt}r^{-4}(r^2-\nnorm{x-x_0}^2)+ \alpha&,&\ \mathrm{if}\ \nnorm{x-x_0}<r\ ,\\
\alpha&,&\  \mathrm{otherwise}\ \end{array}\right .$$
for some $x_0\in\Omega$ (see Figure~\ref{fig:ud}), and let $\widetilde{y}$ be the solution of the Poisson equation 
$$-\Delta \widetilde{y} = u_d,\ \ \widetilde{y}|_{\partial\Omega}\equiv 0\ .$$
The `data' $y_d$ entering the control problem~\eqref{mgipm:eq:lap} are obtained by adding to $\widetilde{y}$ a random perturbation that is uniformly distributed
in $[-\delta,\delta]$ with $\delta=0.05 \:\nnorm{\widetilde{y}}_{L^{\infty}}$.
%to $\widetilde{y}$, namely
%$$y_d=\widetilde{y}+\delta,\ \ \mathrm{where}\ \nnorm{\delta}_{L^{\infty}}\le 0.05 \:\nnorm{\widetilde{y}}_{L^{\infty}}\ .$$
If $u_d\ge 0$, and if $\delta$ and $\beta$  are small then the solution $u^{\min}$ of~\eqref{mgipm:eq:lap} is expected to be close to $u_d$. 
Therefore, by letting the parameter $\alpha$  be slightly negative we expect to find a localized $u^{\min}$ with nontrivial active and inactive sets, as desired
for testing the performance of our algorithm.
In Figure~\ref{fig:umin5} we show the solution~$u^{\min}$ for $\alpha=-0.1$ and $n_0=64$; as it turns out, for this example 
about $11.5$\% of the constraints are inactive if $\beta=10^{-5}$, and about $51\%$ are inactive if $\beta=10^{-4}$.

\begin{figure}[!htb]
\begin{center}
        \includegraphics[width=6in]{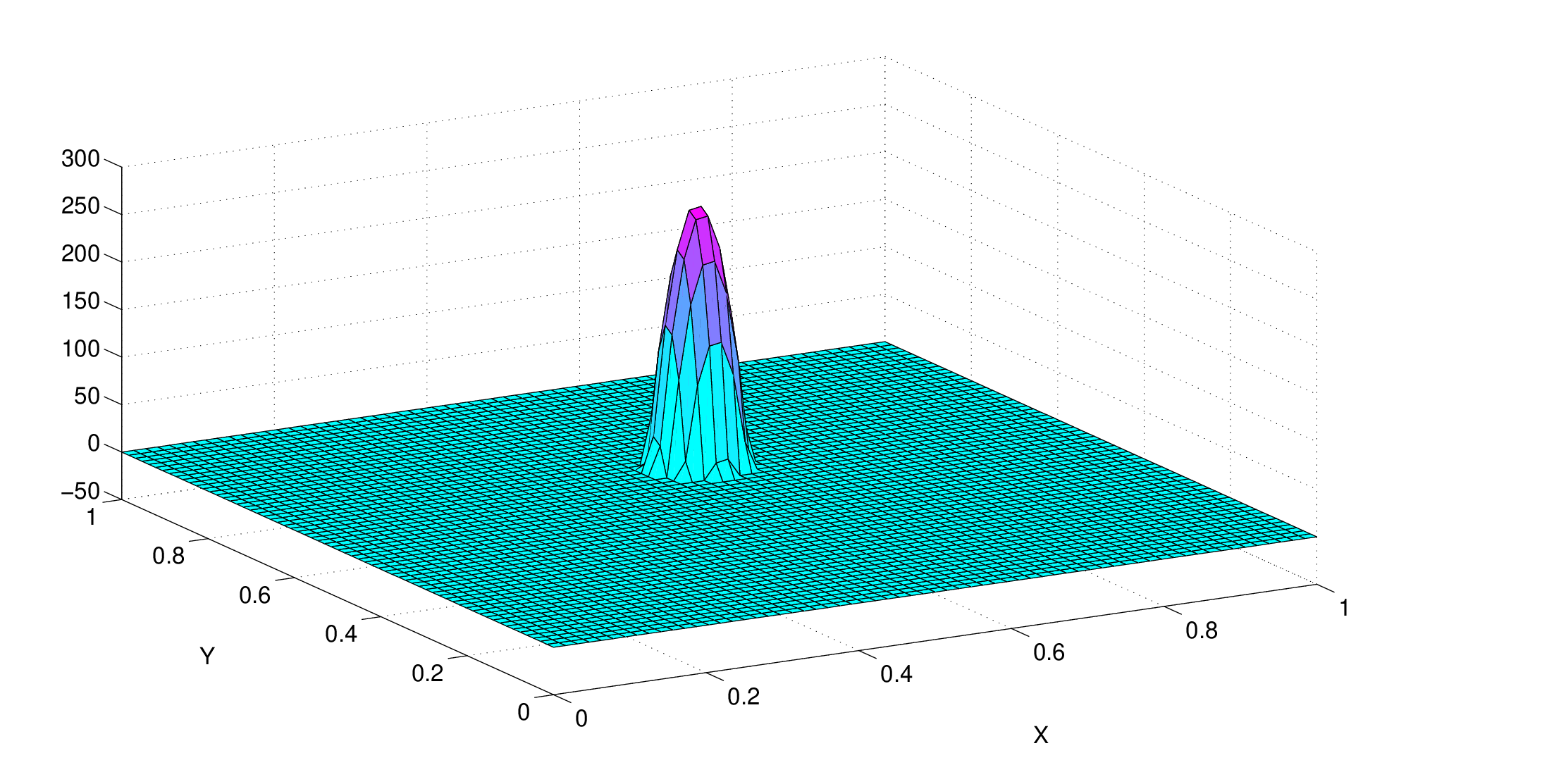}%constr_unconstr3.eps
\caption{Target control $u_d$ with $x_0=[0.54,0.62]^T$, $r=0.06$, $\alpha=-0.1$.} 
\label{fig:ud}
\end{center}
\end{figure}

\begin{figure}[!htb]
\begin{center}
        \includegraphics[width=6in]{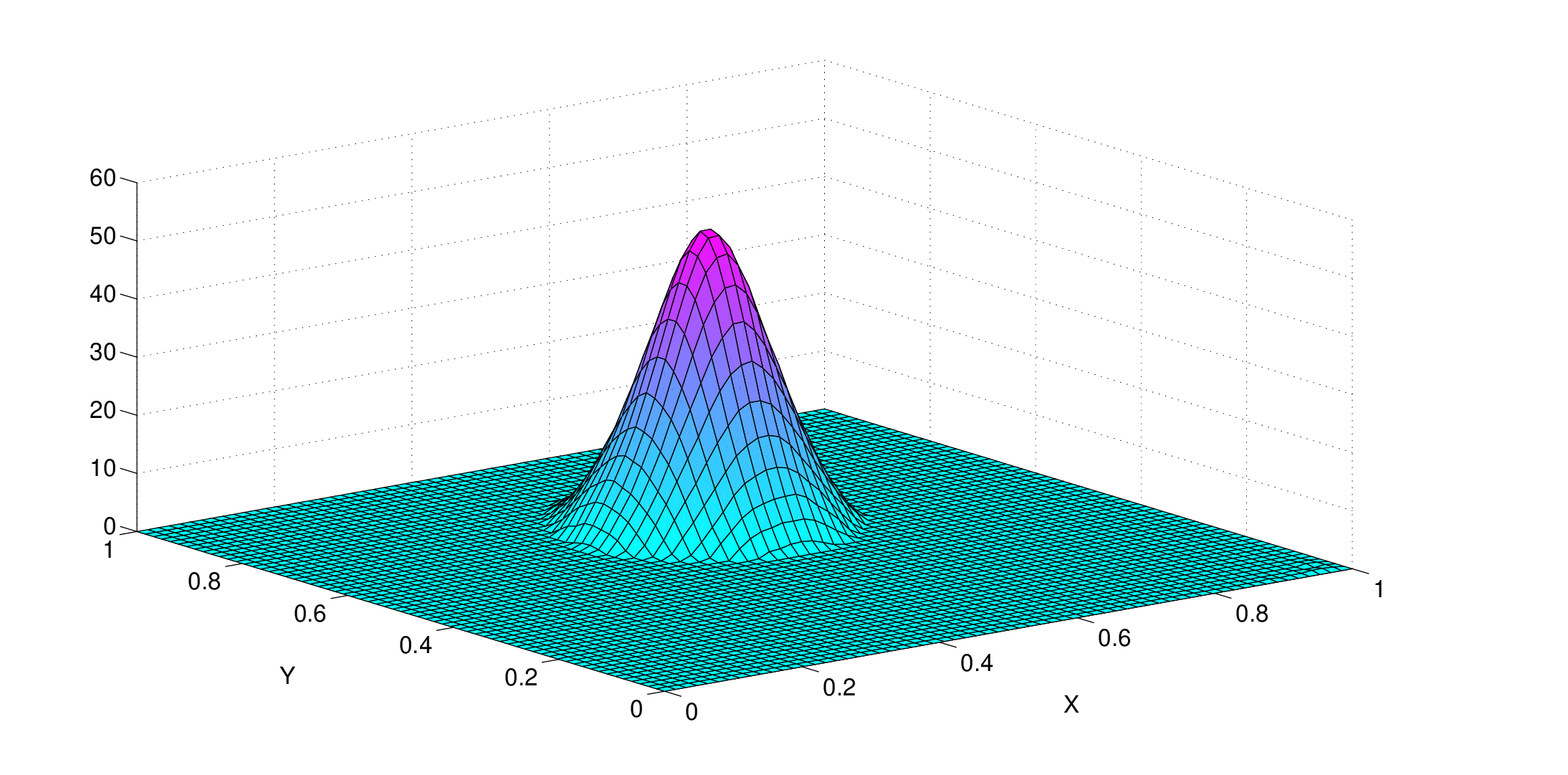}%constr_unconstr3.eps
\caption{Solution $u^{\min}$ on coarsest grid $(n_0=64)$ and $\beta=10^{-5}$; about $11.5\%$ of constraints are inactive.} 
\label{fig:umin5}
\end{center}
\end{figure}

The data in Table~\ref{tab:itcountssnm4} (respectively~\ref{tab:itcountssnm5})  
shows the iteration counts for the two-level preconditioned CGS method (columns labeled `A'), the 
multigrid method of the second kind (two levels only, columns `B'), and
for unpreconditioned CG (columns `C'), with the regularizing parameter being $\beta=10^{-4}$ (respectively $\beta=10^{-5}$). 
For example, in Table~\ref{tab:itcountssnm4} on the third grid ($n_2=256$) the SSNM converged in 4 iterations; for the first outer iteration the linear
system was solved in 4 two-grid preconditioned CGS iterations, while the remaining three outer iterations required 5 CGS iterations;
for the same systems the multigrid method of the second kind converged in 9 iterations, while CG required 13 iterations.
 First we should point out that the results shown  are consistent with the
theoretical estimates of Theorem~\ref{maintheorem} and Remark~\ref{rem:sqrtest}, namely the number of two-grid preconditioned CGS
iterations decreases with \mbox{$h_j\downarrow  0$}. There is one notable exception: for the very first iterate on the coarsest mesh we used
as initial guess $\op{I}_0^{(0)} = \{1,\dots,N_0\}$, so the two-grid preconditioner is the same as for the unconstrained problem, thus 
very efficient. In order to answer the question of whether the two-grid preconditioned CGS is more efficient than CG
we adopt the point of view that in a truly large-scale context (three- or four-dimensional problems, or when applying the operators $\op{K}_j$ requires 
a complicated sequence of operations) the cost of the solution process is largely given by fine-grid mat-vecs. If we accept
the number of fine-space mat-vecs as a measure 
of efficiency we notice that the two-grid preconditioned CGS solves required a total of 11 CGS iterations, hence 22 mat-vecs at the finest level
of~\mbox{$n_5=2048$}, compared to 36 mat-vecs needed by CG. As can be inferred from the tables, the ratio (\#~multigrid mat-vecs /  \# CG mat-vecs) is decreasing
with mesh-size. However, the above ratio would decrease much faster if  $d_\sigma((\op{G}_j^{\mathrm{in}})^{-1}, \op{S}_j^{\mathrm{in}})$
would decay at a faster rate, as it does in the unconstrained case. The multigrid method of the second kind also behaves as expected, in the sense
that the number of iterations decreases with $h_j\downarrow 0$. However, as the data suggest, it requires more iterations than the
two-grid preconditioned CGS method. 
As the theory suggests both for the preconditioner introduced in this work and for the multigrid method of the second kind,
the base level cannot be too coarse, or the algorithm will be inefficient or even fail to converge. As can be seen from Table~\ref{tab:itcountssnm5},
the multgrid method of the second kind only converges when $h_j\le 2^{-10}$, while the multigrid preconditioned CGS  method exhibits
variable behavior at coarser levels (but does converge). Finally, we should remark that the algorithm performed surprisingly well
considering that $\beta$ was in fact chosen quite small compared to what the theory suggested ($\beta\approx C \sqrt{h}$).

\begin{table}
  \caption{Iteration counts for the two-level preconditioned CGS method \textnormal{(A)}, Hackbusch  \textnormal{(B)}, and unpreconditioned CG  \textnormal{(C)}; $\beta=10^{-4}$.}
      {\begin{tabular}{|@{}c|ccc|ccc|ccc|ccc|ccc|ccc|}\hline%\hline
	   &
	  \multicolumn{3}{|c}{$64$} & \multicolumn{3}{|c}{$128$} & \multicolumn{3}{|c}{$256$} & \multicolumn{3}{|c}{$512$} & \multicolumn{3}{|c}{$1024$} 
	   & \multicolumn{3}{|c|}{$2048$}\\\hline%\hline
	  it. & A   & B  & C & A   &  B & C & A   & B & C & A   & B & C & A & B & C & A & B & C\\\hline
	  1& $2$ & 7  & 13 & $4$ & 11 & 12 & $4$ & 9 & 13 & $4$ & 7 & 13 & $3$ & 6 & 12 & $3$ & 6 & 12\\
	  2& $7$ & 16 & 13 & $6$ & 12 & 12 & $5$ & 9 & 13 & $4$ & 8 & 13 & $4$ & 7 & 12 & $4$ & 6 & 12\\
	  3& $6$ & 17 & 12 & $6$ & 12 & 12 & $5$ & 9 & 13 & $4$ & 8 & 13 & $4$ & 7 & 12 & $4$ & 6 & 12\\
	  4& $6$ & 17 & 12 & $6$ & 12 & 12 & $5$ & 9 & 13 & $4$ & 8 & 13 &  -- & -- & -- & --  & -- & --  \\
	  5& $6$ & 17 & 12 & --  & --   &  --  & --  & --  & --   & --  & -- &  --  &  --   & -- &  --  & --  & -- &  -- \\
	  \hline%botrule
      \end{tabular}}
      \label{tab:itcountssnm4}
\end{table}

\begin{table}
  \caption{Iteration counts for the two-level preconditioned CGS method  \textnormal{(A)}, Hackbusch  \textnormal{(B)}, and unpreconditioned CG  \textnormal{(C)}; $\beta=10^{-5}$; `nc' means `not converged'.}
      {\begin{tabular}{|@{}c|ccc|ccc|ccc|ccc|ccc|ccc|}\hline%\hline
	   &
	  \multicolumn{3}{|c}{$128$} &\multicolumn{3}{|c}{$256$} & \multicolumn{3}{|c}{$512$} & \multicolumn{3}{|c}{$1024$} 
	   & \multicolumn{3}{|c|}{$2048$}\\\hline%\hline
	  it. & A   &  B & C & A   & B & C & A   & B & C & A & B & C & A & B & C\\\hline
	  1& $4$ & nc  & 31 & $6$ & nc & 14 & $5$ & nc & 14 & $5$ & 18 & 12 & $4$ & 13 & 11\\
	  2& $11$& nc  & 16 & $30$& nc & 12 & $7$ & nc & 13 & $6$ & 21 & 11 & $5$ & 13 & 12\\
	  3& $11$& nc  & 15 & $9$ & nc & 12 & $7$ & nc & 12& $6$ & 21 & 11 & $5$ & 13 & 12\\
	  4& $9$ & nc  & 15 & $9$ & nc & 12 & $7$ & nc & 12& $6$ & 21 & 11 & $5$ & 13 & 12\\
	  5& $11$ & nc  & 17 & --  & -- & -- & --  & -- & --& -- & -- & -- & -- & -- & --\\
	  5& $16$ & nc  & 16 & --  & -- & -- & --  & -- & --& -- & -- & -- & -- & -- & --\\
	  5& $16$ & nc  & 16 & --  & -- & -- & --  & -- & --& -- & -- & -- & -- & -- & --\\
	  \hline%botrule
      \end{tabular}}
      \label{tab:itcountssnm5}
\end{table}

\subsection{Multigrid results}
\label{ssec:numericsmultigrid}
For our multigrid experiments with  problem~\eqref{mgipm:eq:lap} we used CG in conjunction with 
the multigrid preconditioner derived from the symmetric two-grid preconditioner~\eqref{eq:matformprecsym}. This time, in order
to show that our method is not restricted to triangular grids we discretize the 
PDE-constraints using continuous piecewise bilinear elements, and we eliminated any boundary restrictions on the controls;
 the coarse space is defined similarly 
to the one introduced in Section~\ref{ssec:multigriddef} (see~\eqref{eq:coarseinactdef}). 
Due to the problem sizes involved we solve Poisson's equation using standard multigrid -- the full approximation scheme -- with a base case of $256$ intervals in each direction; 
anytime we solve the PDE at a resolution lower than $256\times 256$ we employ a direct solver.
Recall that each Hessian-vector multiplication in the SSNM solution process 
requires two PDE solves. Note that the multigrid solve for Poisson's equation is completely
separated from the multigrid preconditioner for the optimization problem. The entire algorithm is implemented 
in M{\footnotesize ATLAB} (version R2010a) with the exception of the operation of identifying the coarse inactive nodes, which was implemented 
in C (and compiled with mex); the reason behind this choice is related to the unavoidable `for-loop' in this part of the algorithm which
adds significant runtime in an artificial manner to an otherwise simple routine. The computations were performed on a computer with two 8-core 2.9 GHz
Intel Xeon E5-2690 processors with 256 GB RAM.

In our experiments we measured the added  wall-clock time of all the linear solves in the SSNM solution process (including all the actions related
to setting up the multigrid preconditioner at each SSNM iteration), as well as the  wall-clock time for all the remaining operations. We should
point out that, aside from the linear solve, each SSNM  iterate requires two additional mat-vecs, regardless of the method chosen for solving the linear systems. 
Since our efforts were focused on
solving the linear systems, we compare the added runtimes for the multigrid preconditioned linear solves with those of the unpreconditioned CG
solves. 
%In the interest of transparency we also report the runtimes unrelated to the linear solves.

Of all the experiments we conducted we show three that seem to represent the typical behavior of our method. In the first experiment we let $\beta=10^{-4}$, 
and we considered uniform $n\times n$ grids on $[0,1\times[0,1]$  with $n=64, 128, 512, 1024, 2048, 4096$. In order to ensure 
a sufficiently large active set for the solution we imposed a slightly elevated lower constraint, namely $a\equiv 0.1$. We set the tolerance for solving the
linear systems at $\tau=10^{-8}$. The results are reported in Table~\ref{tab:itcountssnm6}. In the row marked `\# cg its' we  list the number
of unpreconditioned CG iterations for each SSNM outer iteration. For example, for $n=1024$ there were three SSNM iterations, and at each SSNM
iteration the corresponding linear system required 11 unpreconditioned CG iterations to converge. Similarly, in the row marked 
`\# mg its ($n_0=64$)', {\bf the same} three linear systems required 8, 7, and 7 multigrid-preconditioned CG iterations; for all multigrid preconditioners
in this experiment the base case is set at $n_0=64$. Below the rows 
listing the number of iterations we show the added runtimes of the linear solves; for multigrid these include the time to set up the 
coarse spaces, and for constructing and inverting the coarsest Hessian. Finally in the row marked `$t_{\mathrm{mg}}/t_{\mathrm{cg}}$'  we record the
speed-up of the multigrid-preconditioned solves over the unpreconditioned solves. In terms of number of iterations we note 
an already familiar behavior: the number of unpreconditioned CG iterations remains bounded (for this example it turned out to be constant), while
the number of  multigrid-preconditioned CG solves decreases slowly with increasing resolution. The speed-up follows the same pattern as the 
number of multigrid preconditioned CG iterations, slowly decreasing from above-unity for small-to-medium resolutions (2.87 for two-grid preconditioners 
at $n_1=128$) to 0.67 at $n_8=4096$. Computing solutions at resolutions higher than $4096\times 4096$ with our current tools requires 
parallel programming and falls beyond the scope of this work.
% found on brcpc36:PROJECTS/SSNM_ELLIPTIC_MG3/testmay31v3.log
\begin{table}[!h]
  \caption{Comparison of iteration counts and runtimes for multigrid vs. unpreconditioned CG; $\beta=10^{-4}$, $a=0.1, b=1$, $\mathrm{tol}=10^{-8}$.}
      {\begin{tabular}{|@{}c|c|c|c|c|c|c|}\hline%\hline
	  n & $128$ & $256$ & $512$ &  $1024$ & $2048$ & $4096$ \\\hline%\hline
	  \# cg its           & 11, 11, 11, 11 & 11, 11, 11, 11 & 11, 11, 11 & 11, 11, 11& 11, 11, 11 & 11, 11, 11\\
	  $t_{\mathrm{cg}}$ (s) & 4.24  & 25.75  & 104.26  & 490.74  &  2358   & 13553\\\hline
	  %$t_{\mathrm{ssnm}}$ (s) & 0.62 & 3.73 & 14.49 & 68.42 & 356.62 & 1881.13 \\\hline
	  \# mg its ($n_0=64$) & 10, 9, 9, 9 & 10, 10, 10, 10  & 9, 9, 9& 8, 7, 7 & 8, 8, 8 & 7, 7, 7\\
	  $t_{\mathrm{mg}}$ (s) & 12.17  & 28.03  & 159.22  & 583.19  &  2454  & 9195  \\\hline
	  %$t_{\mathrm{ssnm}}$ (s) & 0.62 & 3.38 & 16.59 & 64.01 & 236.87 & 939.65\\\hline
	  $t_{\mathrm{mg}}/t_{\mathrm{cg}}$&
	    2.87 & 1.09 & 1.53 & 1.19 & 1.04 & 0.68 \\
	  \hline%botrule
      \end{tabular}}
      \label{tab:itcountssnm6}
\end{table}

For the second set of computations (see Table~\ref{tab:itcountssnm7}) we decreased $\beta$ to $10^{-5}$ and chose the lower 
constraint $a\equiv 0$. We also lowered the tolerance $\tau=10^{-10}$ in order to increase the overall number of iterations.
While the behavior of unpreconditioned CG remains essentially consistent with the earlier example (the number of iterations
remains  fairly constant), the number of multigrid-preconditioned CG iterations with base $n_0=64$ shows a significant increase 
with increasing resolution.
This fact is consistent with the analysis, in that the estimates hold only if the base case is sufficiently fine. If we increase
the base case from 64 to 128 the `good' behavior is observed again, with the number of iterations slowly decreasing
with increasing resolution. Of course, the linear systems to be solved at the new base case $n_0=128$  are significantly larger than for base case $64$, 
and the runtimes are seriously affected at moderate resolutions, resulting in `speedups' of $\approx 41$ at $n=256$ and $\approx 8.4$ at $n=512$. However, 
at $n=4096$ we obtain an actual speedup of $0.63$. This trend suggests that at even higher resolutions multigrid-preconditioned CG will outperform
unpreconditioned CG even more.

\begin{table}[!h]
  \caption{Comparison of iteration counts and runtimes for multigrid vs. unpreconditioned CG; $\beta=10^{-5}$, $a=0, b=1$, $\mathrm{tol}=10^{-10}$.}
      {\begin{tabular}{|@{}c|c|c|c|c|c|}\hline%\hline
	   n & $256$ & $512$ &  $1024$ & $2048$ & $4096$\\\hline%\hline
	  \# cg its            & 23, 24, 23, 23 & 23, 24, 24  & 24, 24, 24 & 25, 24, 25   &   25, 25, 24     \\
	   $t_{\mathrm{cg}}$ (s) & 42.59  & 187.70   & 1034  &  5042  &   29385  \\\hline
           %$t_{\mathrm{ssnm}}$ (s) & 3.12 & 12.15 & 70.26 & 344.97 & 1936.26 \\\hline
	  \# mg its ($n_0=64$) & 20, 20, 19, 19 & 24, 24, 24 & 22, 23, 22 & 26, 26, 24 & 42, 49, 48\\\hline
	  \# mg its ($n_0=128$) & 18, 15, 15, 15 & 18, 17, 17  & 18, 18, 18 & 16, 14, 17 & 16, 16, 16 \\
	  $t_{\mathrm{mg}}$ (s) & 1747  & 1577  & 2700  & 5858  &  18580   \\\hline
	  %$t_{\mathrm{ssnm}}$ (s) & 3.74 & 60.95 & 229.6 & 784.29 & 2969.44 \\\hline
	  $t_{\mathrm{mg}}/t_{\mathrm{cg}}$&
	    41.02 & 8.4 & 2.61 & 1.16 & 0.63 \\
	  \hline
      \end{tabular}}
      \label{tab:itcountssnm7}
\end{table}

\subsection{Further remarks}
\label{ssec:remarks}
We return to the numerical results from Section~\ref{ssec:onednumexample}, which 
suggest that the two-grid preconditioner is closer to being of optimal order than 
Theorem~\ref{maintheorem} predicts, in the following sense: if restricted to 
subspaces of functions supported away from the boundary of~$\Omega_j^{\mathrm{in}}$, namely
spaces of the form 
$$\op{V}_{j,
H}^{\mathrm{in}}=\mathrm{span}\{\varphi_i^{(j)}\in\op{V}_{j}^{\mathrm{in}}\
:\  \mathrm{dist}(\mathrm{supp}(\varphi_i^{(j)}),\partial
\Omega_j^{\mathrm{in}})\ge H\}\ ,$$ with $H>h$, then the
${\Hneg{2}(\Omega)}$-approximation property of
$\pi^{\mathrm{in}}_{j-1}$ is expected to be of almost optimal
order. To see this, consider $u\in \op{V}_{j, H}$  for
sufficiently large $H$. Then $\pi_{j-1}^{\mathrm{in}} u \approx 0$ on
$\partial_{\mathrm{n}}\Omega_j^{\mathrm{in}}$, since the size of
$\pi_{j-1}^{\mathrm{in}} u$ decays exponentially fast away from
$\mathrm{supp}(u)$, and therefore the numerical-boundary term
in~\eqref{eq:lemmapproxproj} from the proof of
Lemma~\ref{lma:l2projapprox} can be almost be dropped.
Since~\eqref{eq:l2projapprox} was the source of the largest term in
the estimate~\eqref{eq:twogridprecnormest} we expect that
$$ \nnorm{\mathrm {Proj}_{\op{V}_{j,
H}^{\mathrm{in}}}(\op{G}_j^{\mathrm{in}} -
\op{M}_j^{\mathrm{in}})|_{\op{V}_{j, H}^{\mathrm{in}}}} \le C h_j^2\ .
$$ 
While it is not difficult to formalize the above argument we should
point out its main consequence: the number of  generalized
eigenvalues in $\sigma(\op{G}_j^{\mathrm{in}},
\op{M}_j^{\mathrm{in}})$ that are significantly far from $1$ is
proportional to the number of nodes supported near the boundary of
$\Omega_j^{\mathrm{in}}$. If the dimension $d$ is greater than 1, then
this number normally increases with resolution, and certainly exceeds
the relatively low number of iterations we found in
Section~\ref{ssec:twodimnum}. This indicates that the number of
generalized eigenvalues which are $O(\sqrt{h_j})$ away from 1 dominate
the  computations.  While the two-level preconditioner does not have
optimal order, the above argument together with the analysis from
Section~\ref{sec:analysis} also suggest that in order to improve the
presented preconditioning technique one has to  tackle the diverging
action of the two operators on the nodal basis functions that are supported
 near $\partial\Omega_j^{\mathrm{in}}$.

\section{Conclusions}
\label{sec:conclusions}
We have constructed multigrid preconditioners for the linear systems
arising in the semismooth Newton solution process for a control
problem constrained by smoothing linear operators with box-constraints
on the control.  The preconditioner is similar
to the one constructed by Dr{\u a}g{\u a}nescu and Dupont
in~\cite{MR2429872} for the problem without inequality control-constraints,
and it maintains some  of its  qualitative behavior: its approximation properties improve
with increasing resolution. However, even though the approximation 
qualities of the constructed preconditioner are of suboptimal order,
the construction and the analysis form an important stepping stone
towards finding optimal order preconditioners for the systems under scrutiny.
The question of their practical importance and how they compare in
efficiency with the similar preconditioners developed by Dr{\u a}g{\u a}nescu and Petra 
in~\cite{Dra:Pet:ipm} for systems arising in interior point methods 
requires a more involved study and forms the subject of current research.

\section*{Acknowledgements}
The author is indebted to Todd Dupont for introducing him to this
subject and for guiding his initial steps in multigrid research and to
Michael Hinterm{\" u}ller for helpful discussions on semismooth Newton
methods. The author also thanks the referees for their thorough
reading of the manuscript and useful suggestions.  

\appendix
\section{Some technical results regarding the spectral distance}
\label{sec:specdist}
Given a Hilbert space~(${V},\innprd{\cdot}{\cdot})$ and an operator ${A}\in \mathfrak{L}_+({V})$ we define the bilinear form
$$\innprd{u}{v}_{{A}} = \innprd{{A} u}{v}\ .$$
For $B\in \mathfrak{L}({V})$ denote
$$w_{{A}}({B}) = \sup_{u\in{V}\setminus \{0\}}\Abs{\frac{\innprd{Bu}{u}_{A}}{\innprd{u}{u}_{A}}}$$
If $A$ is Hermitian, then $\innprd{\cdot}{\cdot}_{{A}}$ is  an inner product, case in which
$w_{{A}}({B})$ is the numerical radius of $B$ with respect to $\innprd{\cdot}{\cdot}_{{A}}$, and
the power inequality 
$$
w_A(B^n)\le (w_A(B))^n,\ \ n=1,2,3,\dots
$$
holds (e.g., see Theorem~2.1-1 in~\cite{MR98b:47008}).
The next result is a weak generalization of the inequality above 
for the case when $A$ is non-symmetric and $n=2$.
\begin{lemma}
\label{lma:sqnonsymest}
There exists a constant $C_{A}=C(\nnorm{A_a^{-1}}, \kappa(A), \nnorm{A})$ so that
\be
\label{eq:pow2ineq}
w_A(B^2)\le C_A (w_A(B))^2,
\ee
where $A_s, A_a$ are the Hermitian and skew-Hermitian parts of $A$, and \mbox{$\kappa(A)=\nnorm{A}\cdot \nnorm{A^{-1}}$}.
\end{lemma}
\begin{proof}
First we identify two constants $c_1, c_2$ so that $c_1\nnorm{B}\le w_A(B)\le c_2 \nnorm{B}$.
Let $\alpha=\nnorm{A_a^{-1}}^{-1}>0$ be the smallest eigenvalue of $A_a$. 
%Cf.~\cite{MR1417720} (P4.2.3), we have $\nnorm{A^{-1}}^{-1}\ge \alpha$.
We have 
\be
\Abs{\frac{\innprd{B u}{u}_A}{\innprd{ u}{u}_A}} = \Abs{\frac{\innprd{A B u}{u}}{\innprd{A u}{u}}}\le \alpha^{-1}\nnorm{A}\cdot\nnorm{B}\ ,
\ee
since $\Abs{\innprd{A u}{u}}=\Abs{\innprd{A_s u}{u} + \innprd{A_a u}{u}}\ge \alpha \nnorm{u}^2$, because  $\innprd{A_a u}{u}\in \R$ 
and $\innprd{A_a u}{u}\in {\bf i}\R$.
By taking the supremum over all $u\in V\setminus\{0\}$ we obtain
\be
w_A(B)\le \alpha^{-1}\nnorm{A}\cdot\nnorm{B}\ .
\ee
From the polarization identity
\bes
 4\innprd{B u}{v}_A &= &\innprd{B(u+v)}{u+v}_A - \innprd{B(u-v)}{u-v}_A \\
&&+{\bf i}\innprd{B(u+{\bf i}v)}{u+{\bf i}v}_A -{\bf i}\innprd{B(u-{\bf i}v)}{u-{\bf i}v}_A 
\ees
we obtain
\bes
%4\innprd{A B u}{v} &=& 
4\Abs{\innprd{B u}{v}_A}&\le & w_{A}(B)\nnorm{A}\left(\nnorm{u+v}^2+\nnorm{u-v}^2+\nnorm{u+{\bf i}v}^2 +\nnorm{u-{\bf i}v}^2 \right)\\
&=& 4 w_{A}(B)\nnorm{A}\left(\nnorm{u}^2+\nnorm{v}^2\right)\ .
\ees
By taking the supremum over all unit vectors $u, v\in V$ we obtain
$\nnorm{A B}\le  2 w_{A}(B)\nnorm{A}$. Hence
$$
\nnorm{B}\le \nnorm{AB}\nnorm{A^{-1}}\le 2 w_{A}(B)\kappa(A)\ .
$$
So $c_1=(2\kappa(A))^{-1}$ and $c_2=\alpha^{-1}\nnorm{A}$.
Therefore
\bes
w_A(B^2)\le c_2 \nnorm{B^2}\le c_2 \nnorm{B}^2\le c_2 c_1^{-2}w_A(B)^2\ ,
\ees
so $C_A =  c_2 c_1^{-2}$.
\end{proof}

We should point out that if $A$ is Hermitian, then, cf.~\cite{MR98b:47008} we have $C_A=1$.
Mounting numerical evidence suggests that $C_A$ is bounded uniformly with respect to
$A$ even for the non-Hermitian case; however,
proving this claim is still part of ongoing research.

We now prove Lemma~\ref{lma:techlma1}.
\begin{proof}
For two complex numbers $z_1, z_2\in \H$ we have
\be
\label{eq:complezineq}
\Re\left(\overline{(z_2-z_1)}(\ln z_2 - \ln z_1)\right)>0\ .
\ee
To verify~\eqref{eq:complezineq} consider the segment
$\gamma:[0,1]\to \C$ given by $\gamma(t)=z_1+t (z_2-z_1)$. Then
\bes
\lefteqn{\Re\left(\overline{(z_2-z_1)}(\ln z_2 - \ln z_1)\right)}\\
&=& \Re\left(\overline{(z_2-z_1)}\int_{\gamma}\frac{dz}{z}\right)
= \Re\left(\abs{z_2-z_1}^2\int_0^1\frac{dt}{z_1+t (z_2-z_1)}\right)\ ,
\ees
and~\eqref{eq:complezineq} follows from $(z_1+t (z_2-z_1))^{-1}\in \H$ for $t\in [0,1]$.
Given arbitrary $w_1, w_2\in \H$ define the function
$$f:[0,\infty)\to \R, \ \ f(x)=\Abs{\ln\frac{w_1+x}{w_2+x}}^2\ .$$ 
We have
$$f'(x) = 2\: \Re\left((\overline{(w_1+x)^{-1}}-\overline{(w_2+x)^{-1}})(\ln (w_1+x)-\ln(w_2+x))\right)<0,$$
as follows from~\eqref{eq:complezineq} with $z_i=(w_i+x)^{-1}, i=1, 2$. Therefore $f$ is decreasing 
and~\eqref{eq:w12xineq} follows.
\end{proof}

We now conclude with the proof of Lemma~\ref{lma:sqconvnewt}.
\begin{proof}
First note that 
\bes
\mathfrak{N}_j(\op{X})-(\op{G}^{\mathrm{in}}_j)^{-1} = 2 \op{X} - \op{X} \op{G}^{\mathrm{in}}_j\op{X} - (\op{G}^{\mathrm{in}}_j)^{-1}
=-(\op{G}^{\mathrm{in}}_j)^{-1}(\op{G}^{\mathrm{in}}_j \op{X}-I)^2\ .
\ees
Since for $z$ around $1$ we have $\ln  z = z-1 + O(z-1)$ there are $k>1$ and $\delta>0$ so that, if $z\in \op{B}_\delta(1)$ or $\abs{\ln z}\le \delta$, then
$$k^{-1}|z-1|\le \abs{\ln z}\le k |z-1| .$$
Denote by $A=(\op{G}^{\mathrm{in}}_j)^{-1}$ and assume that $d_{\sigma}(\op{X},A)\le \delta$ with $\delta$ to be determined.
For $u\in \op{V}_{j}^{\C}\setminus \{0\}$ and $A=(\op{G}^{\mathrm{in}}_j)^{-1}$
\bes
\Abs{\frac{\innprd{\mathfrak{N}_j(\op{X}) u}{u}}{\innprd{A u}{u}} - 1} &=& 
\Abs{\frac{\innprd{A(\op{G}^{\mathrm{in}}_j \op{X}-I)^2 u}{u}}{\innprd{A u}{u}}}
\le w_{A}\left((\op{G}^{\mathrm{in}}_j \op{X}-I)^2\right)\\
&\le& C_A \left(w_A(\op{G}^{\mathrm{in}}_j \op{X}-I)\right)^2\ ,
\ees
where $C_A$ is the constant in from Lemma~\ref{lma:sqnonsymest} (depending on $\beta, C$).
Since 
\bes
w_A(\op{G}^{\mathrm{in}}_j \op{X}-I) = \sup_{u\in \op{V}_{j}^{\C}\setminus \{0\}} \Abs{\frac{\innprd{\op{X}u}{u}}{\innprd{A u}{u}}-1}\le
k\: d_{\sigma}(\op{X},A)\ ,
\ees
we obtain
\bes
\Abs{\frac{\innprd{\mathfrak{N}_j(\op{X}) u}{u}}{\innprd{A u}{u}} - 1} \le C_A k^2 \left(d_{\sigma}(\op{X},A)\right)^2 \le C_A k^2 \delta^2\ .
\ees
By further restricting $\delta$ so that $C_A k^2 \delta^2 <\delta$ we get
\bes
\sup_{u\in \op{V}_{j}^{\C}\setminus \{0\}}\Abs{\ln\frac{\innprd{\mathfrak{N}_j(\op{X}) u}{u}}{\innprd{A u}{u}}}&\le& 
k \sup_{u\in \op{V}_{j}^{\C}\setminus \{0\}}\Abs{\frac{\innprd{\mathfrak{N}_j(\op{X}) u}{u}}{\innprd{A u}{u}} - 1} \le
C_A k^3(d_{\sigma}(\op{X},A))^2\ .
\ees
\end{proof}
%\Abs{\ln\frac{\innprd{\mathfrak{N}_j(\op{X}) u}{u}}{\innprd{A u}{u}}}

\bibliography{ssnmrefs}
\bibliographystyle{siam}

\end{document}